\theoremstyle{plain}
\newtheorem{theorem}{Theorem}[section]
\newtheorem{corollary}[theorem]{Corollary}
\theoremstyle{definition}
\newtheorem{definition}[theorem]{Definition}
\newtheorem{assumption}[theorem]{Assumption}
\theoremstyle{remark}
\def\argmin{\mathop{\rm arg\,min}}%
\icmltitlerunning{Differentiable Distributionally Robust Optimization Layers}
\begin{document}

\twocolumn[
\icmltitle{Differentiable Distributionally Robust Optimization Layers}



\icmlsetsymbol{corr}{$\dag$}

\begin{icmlauthorlist}
\icmlauthor{Xutao Ma}{sjtu}
\icmlauthor{Chao Ning}{sjtu,corr}
\icmlauthor{Wenli Du}{ecu}
\end{icmlauthorlist}

\icmlaffiliation{sjtu}{Department of Automation, Shanghai
Jiao Tong University, Shanghai 200240, China}
\icmlaffiliation{ecu}{The Key Laboratory of Smart Manufacturing in Energy Chemical Process, Ministry of Education, 
East China University of Science and Technology, Shanghai 200237, China}

\icmlcorrespondingauthor{Chao Ning}{chao.ning@sjtu.edu.cn}

\icmlkeywords{Machine Learning, ICML}

\vskip 0.3in
]



\printAffiliationsAndNotice{\textsuperscript{$\dag$}Corresponding Author} 

\begin{abstract}
In recent years, there has been a growing research interest in decision-focused learning, which embeds optimization problems as a layer in learning pipelines and demonstrates a superior performance than the prediction-focused approach. 
However, for distributionally robust optimization (DRO), a popular paradigm for decision-making under uncertainty, it is still unknown how to embed it as a layer, \emph{i.e.}, how to differentiate decisions with respect to an ambiguity set. 
In this paper, we develop such differentiable DRO layers for generic mixed-integer DRO problems with parameterized second-order conic ambiguity sets and discuss its extension to Wasserstein ambiguity sets.
To differentiate the mixed-integer decisions, we propose a novel dual-view methodology by handling continuous and discrete parts of decisions via different principles. Specifically, we construct a differentiable energy-based surrogate to implement the dual-view methodology and use importance sampling to estimate its gradient.
We further prove that such a surrogate enjoys the asymptotic convergency under regularization.
As an application of the proposed differentiable DRO layers, we develop a novel decision-focused learning pipeline for contextual distributionally robust decision-making tasks and compare it with the prediction-focused approach in experiments.
\end{abstract}

\section{Introduction}
In real-world scenarios, decision-making problems are typically affected by uncertainties. Therefore, machine learning techniques are usually leveraged to predict the behavior of the uncertainty, and then this prediction is passed to an optimization problem to derive decisions \cite{ning2019optimization}. Conventionally, the learning model is trained by minimizing a prediction loss, \emph{i.e.}, in a prediction-focused way.

In recent years, decision-focused learning, also known as smart predict-and-optimize in operations research \cite{elmachtoub2022smart}, has received much research interest \cite{mandi2023decision,sadana2023survey}. Different from prediction-focused learning, decision-focused learning aims to train a learning model that minimizes a decision loss, \emph{i.e.}, improving the decision quality. 
To implement decision-focused learning, differentiable optimization layers play the central role of passing gradient information from the decision back to the learning model, and this is achieved by differentiating the decision with respect to the learning target.

From the learning side, the learning target of most differentiable optimization layer research is a point prediction of uncertain quantity, and some research learns to predict the distribution of uncertainty. However, in prior research, the robustness of prediction is typically ignored, so the decision made based on this prediction is also in lack of robustness. As an emerging paradigm for robust decision-making, distributionally robust optimization (DRO) has seen a boom in both theory and applications in recent years \cite{delage2010distributionally,wiesemann2014distributionally,mohajerin2018data,rahimian2022frameworks}. Therefore, to improve decision quality while preserving robustness, developing a differentiable DRO layer to learn the ambiguity set in a decision-focused way is highly desired but has not been investigated yet.

From the optimization side, most differentiable optimization layer research focuses on either pure continuous or pure discrete decisions. However, the decisions in practical problems are typically mixed-integer. Only \citet{ferber2020mipaal} developed a mixed-integer linear program (MILP) layer. However, their approach relies on the specific solution structure of linear program (LP). Therefore, how to differentiate the mixed-integer decisions for generic mixed-integer convex optimization remains an unsolved problem.

To fill the aforementioned research gaps, this paper develops the first differentiable DRO layers with mixed-integer decisions. That is, the learning target is an ambiguity set and the output decisions are mixed-integer. The ambiguity set we mainly focus on is the class of parameterized second-order conic (SOC) ambiguity set \cite{bertsimas2019adaptive}, which is widely adopted in various applications \cite{zhou2019distributionally,zhang2022emergency,yang2023distributionally}, and Wasserstein ambiguity set is also discussed in \cref{wass_extend}.
\begin{figure*}[htpb!]
\begin{center}
\centerline{\includegraphics[width=0.8\textwidth]{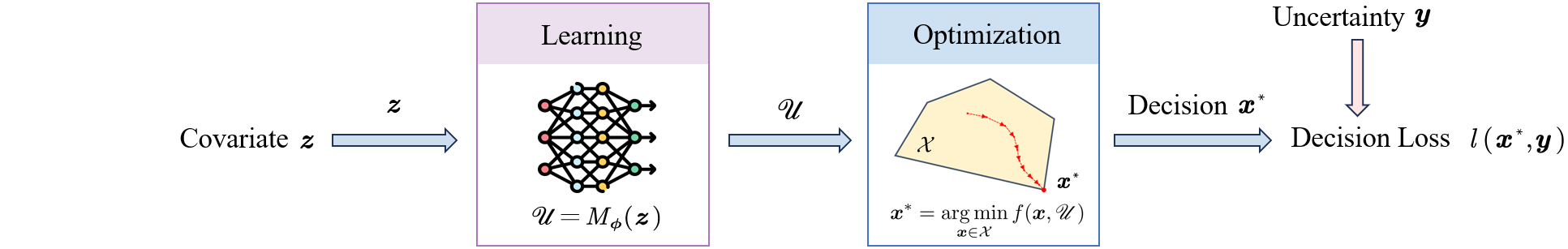}}
\caption{Sequential learning and decision-making pipeline.}
\label{decision_uncertainty}
\end{center}
\vspace{-23pt}
\end{figure*}

The major contributions of this paper are summarized as follows.
\begin{itemize}
    \item We develop the first generic differentiable DRO layers, which enable integrating learning and distributionally robust decision-making via gradient descent. 
    \item We propose a novel dual-view methodology to differentiate the mixed-integer decisions. We note that this methodology can be applied to develop any mixed-integer convex optimization layer, not limited to the proposed DRO layers.
    \item We construct a differentiable energy-based surrogate value function to implement the dual-view methodology and use importance sampling to estimate its gradient. In theory, we prove that such a surrogate enjoys the asymptotic convergency under regularization.
    \item As an application of the proposed differentiable DRO layers, we develop a novel decision-focused learning pipeline, which is of interest in its own right, for contextual distributionally robust decision-making tasks and compare it with the prediction-focused approach in experiments.
\end{itemize}

\vspace{-5pt}
\section{Related Literature}

We first review existing work on differentiable optimization layers with pure continuous and pure discrete decisions.

\textbf{Convex optimization layers.} To differentiate continuous decisions of constrained optimization, the basic idea is to apply the implicit differentiation theorem to the optimality conditions. Following this idea, \citet{amos2017optnet} successfully differentiated through constrained quadratic programs. Differentiating through LP was achieved by adding regulation terms in \citet{wilder2019melding} and \citet{mandi2020interior}. For linear conic programming, the optimality condition was derived by leveraging the homogeneous self-dual embedding technique \cite{busseti2019solution}, and then the implicit differentiation was applied \cite{Agrawal2019DifferentiatingTA}. Finally, \citet{agrawal2019differentiable} aggregated all these work and developed the differentiable convex optimization layer package \emph{cvxpylayers}.

\textbf{Combinatorial optimization layers.} To handle the non-differentiability of discrete decisions, \citet{berthet2020learning} developed differentiable surroagte solution by adding perturbation. Similar ideas also appeared in \citet{niepert2021implicit} and \citet{Pogan}. We refer to \citet{dalle2022learning} for a review of this perturbation technique.

Aside from the differentiable optimization layer approach that manages to differentiate the decision, some research constructs a surrogate loss to circumvent difficulty.

\textbf{Surrogate loss approach.} The seminal work \citet{elmachtoub2022smart} developed a surrogate $\text{SPO}^+$ loss. \citet{shah2022decision} and \citet{zharmagambetov2023landscape} constructed a training-based surrogate loss. \citet{kong2022end} developed a surrogate loss for stochastic programming (SP) by using an energy-based model. For combinatorial optimization problems, \citet{mulamba2020contrastive} and \citet{mandi2022decision} constructed surrogate loss functions by maximizing the probability of the ground-truth optimal decision.

From the perspective of the learning target, most of the work mentioned above only considered point prediction, except for \citet{donti2017task} and \citet{kong2022end}, which learned conditional distributions. \citet{chenreddy2022data} and \citet{sun2023predict} investigated prediction-focused learning methods for uncertainty sets, and \citet{wang2023learning} developed a learning method for robust optimization (RO) based on an augmented Lagrangian method. Very recently, \citet{Chenreddy2024EndtoendCR} developed an end-to-end learning method for robust optimization.

Perhaps the most relevant work to this paper is \citet{costa2023distributionally}, which to the best of our knowledge is the only research on distributionally robust decision-focused learning. However, their framework presumes the uncertainty distribution to have a residual structure and only applies to specific financial problems with continuous decisions.
On the contrary, our differentiable DRO layers apply to general distributions and a broad family of optimization problems with mixed-integer decisions.
\section{Background}

In this section, we provide some background information on the topic of this paper.
\subsection{Decision-Making under Uncertainty}
A typical sequential learning and decision-making pipeline is shown in \cref{decision_uncertainty}, where the decision-maker first leverages a learning model $M_{\boldsymbol{\phi}}$ to predict some information $\mathscr{U}$ concerning the uncertainty $\boldsymbol{y}$ from covariate $\boldsymbol{z}$. Such information $\mathscr{U}$ can be a point prediction, conditional distribution, uncertainty set, or ambiguity set of the uncertainty $\boldsymbol{y}$. 

Subsequently, the decision-maker takes $\mathscr{U}$ as a parameter and solves a constrained optimization problem $\min_{\boldsymbol{x}\in \mathcal{X}}f(\boldsymbol{x},\mathscr{U})$ to derive the decision $\boldsymbol{x}^{\ast}$. Depending on the form of $\mathscr{U}$, this constrained optimization problem can be deterministic optimization, SP, RO, or DRO. 

Finally, after the decision is made, the uncertainty $\boldsymbol{y}$ is revealed and the decision loss $l(\boldsymbol{x}^{\ast},\boldsymbol{y})$ is realized.
\subsection{Decision-Focused Learning}

In the conventional prediction-focused approach, the learning model is trained independently of the subsequent optimization process. On the contrary, in decision-focused learning, the learning model is trained by directly minimizing the decision loss, which can be formally expressed as the following bilevel problem.
\begin{equation}\label{bilevel}
    \begin{aligned}
    \min_{\boldsymbol{\phi}\in \Phi}&\ \  \hspace{26pt} \mathbb{E}_{(\boldsymbol{z},\boldsymbol{y})\sim \mathbb{P}}\ l\big(\boldsymbol{x}^{\ast}(M_{\boldsymbol{\phi}}(\boldsymbol{z})),\boldsymbol{y}\big)\\
    \text{s.t. }&\ \ \boldsymbol{x}^{\ast}(M_{\boldsymbol{\phi}}(\boldsymbol{z}))=\argmin_{\boldsymbol{x}\in \mathcal{X}} f(\boldsymbol{x},\mathscr{U}=M_{\boldsymbol{\phi}}(\boldsymbol{z}))
\end{aligned}
\end{equation}
where $\boldsymbol{\phi}$ is the parameter of the learning model $M_{\boldsymbol{\phi}}$ we want to train, $\mathbb{P}$ is the joint distribution of covariate $\boldsymbol{z}$ and uncertainty $\boldsymbol{y}$, and here we assume (\ref{bilevel}) is well-defined, \emph{i.e.}, the solution set of the argmin operator is a singleton.

To optimize this bilevel problem by gradient descent, it necessitates the computation of the following gradient.
\begin{equation}
\begin{aligned}
    &\frac{\partial l\big(\boldsymbol{x}^{\ast}(M_{\boldsymbol{\phi}}(\boldsymbol{z})),\boldsymbol{y}\big)}{\partial \boldsymbol{\phi}}\\=&\frac{\partial l\big(\boldsymbol{x}^{\ast}(M_{\boldsymbol{\phi}}(\boldsymbol{z})),\boldsymbol{y}\big)}{\partial \boldsymbol{x}^{\ast}} \frac{\partial \boldsymbol{x}^{\ast}}{\partial M_{\boldsymbol{\phi}}(\boldsymbol{z})} \frac{\partial M_{\boldsymbol{\phi}}(\boldsymbol{z})}{\partial \boldsymbol{\phi}}
\end{aligned}
\end{equation}
where the first and last terms are easy to compute. However, the existence of argmin operator poses great difficulty in the computation of the middle term $\frac{\partial \boldsymbol{x}^{\ast}}{\partial \mathscr{U}}$, so the goal of a differentiable optimization layer is to compute this term.

In this paper, we aim to develop a differentiable DRO layer, \emph{i.e.}, the learning target $\mathscr{U}$ is an ambiguity set and $\min_{\boldsymbol{x}\in \mathcal{X}} f(\boldsymbol{x},\mathscr{U})$ is a DRO problem. Therefore, the goal is to develop a method to differentiate the mixed-integer decision $\boldsymbol{x}^{\ast}$ with respect to the ambiguity set $\mathscr{U}$, \emph{i.e.}, computing $\frac{\partial \boldsymbol{x}^{\ast}}{\partial \mathscr{U}}$.

\subsection{Distributionally Robust Optimization}

The DRO takes an ambiguity set as the parameter and outputs a decision by optimizing the following problem.
\begin{equation}
    \boldsymbol{x}^{\ast}(\mathscr{U})=\argmin_{\boldsymbol{x}\in \mathcal{X}}f(\boldsymbol{x},\mathscr{U}):=  \max_{\mathbb{P}\in \mathscr{U}} \mathbb{E}_{\boldsymbol{y}\sim \mathbb{P}}[c(\boldsymbol{x},\boldsymbol{y})]\label{DRO}
\end{equation}
where the cost function $c$ is usually taken as the decision loss $l$ and $f$ is often referred to as `worst-case expectation'.

\section{Differentiable Distributionally Robust Optimization Layers}
\label{DDROL}

Since the space of all ambiguity sets is infinite-dimensional, directly learning in this space is generally computationally impossible. Therefore, we focus on the class of parameterized SOC ambiguity sets, which stem from the well-known SOC ambiguity set \cite{bertsimas2019adaptive}.

To define the parameterized SOC ambiguity set, we first introduce the following differentiable parameterized second-order cone representable set, which is an extension of the conventional second-order cone representable set (see \cref{SOC_R_S}).
\begin{definition}\label{def_soc_set}
    A set $\mathscr{W}(\boldsymbol{\theta})\subset \mathbb{R}^K$ is a differentiable parameterized second-order cone representable set with parameter $\boldsymbol{\theta}$ if there exists a collection of $J$ second-order cone inequalities such that 
    \begin{align}\nonumber
        \boldsymbol{y}\in \mathscr{W}(\boldsymbol{\theta}) 
        \Leftrightarrow \exists \boldsymbol{v}:  \boldsymbol{A}_j(\boldsymbol{\theta})\left[ \begin{gathered}
        \boldsymbol{y}\\ \boldsymbol{v}
    \end{gathered} \right]-\boldsymbol{b}_j(\boldsymbol{\theta}) \geq_{L^{m_j}} \boldsymbol{0},\forall j \in [J]
    \end{align}
    where $L^{m_j}$ represents a $m_j$ dimensional second-order cone and matrixes $\boldsymbol{A}_j(\boldsymbol{\theta})$ and vectors $\boldsymbol{b}_j(\boldsymbol{\theta})$ are differentiable functions of $\boldsymbol{\theta}$.
\end{definition}

Now we define the parameterized SOC ambiguity set.
\begin{definition}\label{def_soc}
    An ambiguity set $\mathscr{U}(\boldsymbol{\theta})$ is a parameterized SOC ambiguity set with parameter $\boldsymbol{\theta}$ if it can be expressed as follows.
    \begin{equation}\label{DEF_SOC_AMB}
        \mathscr{U}(\boldsymbol{\theta})=\left\{ \mathbb{P} \left|\ \begin{gathered}
        \mathbb{P}(\varXi)=1\\
            \mathbb{E}_{\mathbb{P}} [g_i(\boldsymbol{y},\boldsymbol{\alpha}_i)]\leq \sigma_i,\forall i\in[I]
        \end{gathered}
         \right.       \right\}
    \end{equation}
    where $\mathbb{P}$ is a distribution of $\boldsymbol{y}$, $\boldsymbol{\theta}=(\boldsymbol{\alpha}_1,\sigma_1,\cdots,\boldsymbol{\alpha}_I,\sigma_I)$, support $\varXi\subset \mathbb{R}^K$ of the uncertainty is a second-order cone representable set, and the epigraph of each $g_i$,
    \begin{equation}
        \text{epi}\ g_i =\{ (\boldsymbol{y},u) | u\geq g_i(\boldsymbol{y},\boldsymbol{\alpha}_i)  \}
    \end{equation}
    is a differentiable parameterized second-order cone representable set with parameter $\boldsymbol{\alpha}_i$
\end{definition}

By selecting functions $g_i$, the parameterized SOC ambiguity set can characterize a variety of distributional features. We present some examples of the parameterized SOC ambiguity set in \cref{exam_SOC_amb} and see also \citet{bertsimas2019adaptive}.

For the cost function $c(\boldsymbol{x},\boldsymbol{y})$ in \cref{DRO}, we consider both the single- and two-stage cost functions.
\begin{assumption}\label{ass_cost_func}
    The cost function $c(\boldsymbol{x},\boldsymbol{y})$ admits a single-stage formulation (i) without recourse or a two-stage formulation (ii) with relatively complete recourse as follows.

    (i). Single-stage formulation: $c(\boldsymbol{x},\boldsymbol{y})=\sum_{k=1}^{K} c_k(\boldsymbol{x})y_k$,
    where the epigraph of each function $c_k$ is a second-order cone representable set and the uncertainty $\boldsymbol{y}$ is required to be non-negative.

   (ii). Two-stage formulation: $c(\boldsymbol{x},\boldsymbol{y})$ is the optimal value of an LP, \emph{i.e.},
    \begin{equation}\label{cost_func}
            c(\boldsymbol{x},\boldsymbol{y})=\min_{\boldsymbol{\gamma}\geq0}\boldsymbol{q}^T \boldsymbol{\gamma}\
            \text{s.t. } \boldsymbol{T}(\boldsymbol{y}) \boldsymbol{x} +\boldsymbol{W\gamma}=h(\boldsymbol{y})
    \end{equation}
    where $\boldsymbol{T}(\boldsymbol{y})=\boldsymbol{T}_0+\sum_{k=1}^K \boldsymbol{T}_k y_k$, $h(\boldsymbol{y})=\boldsymbol{h}_0+\sum_{k=1}^K \boldsymbol{h}_k y_k$, and the constraint in (\ref{cost_func}) is feasible for all $\boldsymbol{x}\in \mathcal{X}$ and $\boldsymbol{y}\in \varXi$.
\end{assumption}

To derive a tractable reformulation of the DRO problem (\ref{DRO}), we need the following regularity assumption on the parameterized SOC ambiguity set, and the detailed explanation of \cref{ass_sla} is presented in \cref{Slater}.
\begin{assumption}
\label{ass_sla}
    Slater’s condition holds for the parameterized SOC ambiguity set $\mathscr{U}(\boldsymbol{\theta})$.
\end{assumption}

Now we can state the following reformulation theorem.
\begin{theorem}\label{thm_refor}
    Suppose $\mathscr{U}(\boldsymbol{\theta})$ is a parameterized SOC ambiguity set and \cref{ass_cost_func} and \cref{ass_sla} hold, then the worst-case expectation $f(\boldsymbol{x},\mathscr{U}(\boldsymbol{\theta}))=\max_{\mathbb{P}\in \mathscr{U}(\boldsymbol{\theta})} \mathbb{E}_{\boldsymbol{y}\sim \mathbb{P}}[c(\boldsymbol{x},\boldsymbol{y})]$ is a linear second-order cone program.
\end{theorem}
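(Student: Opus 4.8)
The plan is to read the inner maximization defining $f(\boldsymbol{x},\mathscr{U}(\boldsymbol{\theta}))$ as a generalized moment problem over the cone of nonnegative measures supported on $\varXi$, and to pass to its conic Lagrangian dual. Writing the distribution $\mathbb{P}$ as a nonnegative measure, the primal is $\sup_{\mathbb{P}\ge 0}\int_{\varXi} c(\boldsymbol{x},\boldsymbol{y})\,d\mathbb{P}$ subject to the normalization $\int_{\varXi}d\mathbb{P}=1$ and the $I$ expectation constraints $\int_{\varXi} g_i(\boldsymbol{y},\boldsymbol{\alpha}_i)\,d\mathbb{P}\le\sigma_i$. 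Assigning a free multiplier $\lambda_0$ to the normalization and nonnegative multipliers $\lambda_i$ to the moment constraints, the Lagrangian decouples over $\mathbb{P}$, and maximizing the integrand pointwise collapses the measure into a single semi-infinite constraint, yielding the dual
\[
\min_{\lambda_0\in\mathbb{R},\,\boldsymbol{\lambda}\ge 0}\ \lambda_0+\sum_{i\in[I]}\lambda_i\sigma_i \quad\text{s.t.}\quad \lambda_0+\sum_{i\in[I]}\lambda_i g_i(\boldsymbol{y},\boldsymbol{\alpha}_i)\ge c(\boldsymbol{x},\boldsymbol{y})\ \ \forall\,\boldsymbol{y}\in\varXi .
\]
\cref{ass_sla} (Slater) is invoked precisely here to guarantee zero duality gap, so that $f$ equals the optimal value of this dual, whose objective is linear in $(\lambda_0,\boldsymbol{\lambda})$. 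It remains to convert the semi-infinite constraint into finitely many second-order cone inequalities.

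For the single-stage case of \cref{ass_cost_func}(i), $c$ is linear in $\boldsymbol{y}$, so the constraint is $\sup_{\boldsymbol{y}\in\varXi}\bigl[\sum_k c_k(\boldsymbol{x})y_k-\lambda_0-\sum_i\lambda_i g_i(\boldsymbol{y},\boldsymbol{\alpha}_i)\bigr]\le 0$. Since each $\mathrm{epi}\,g_i$ is SOC-representable and $\lambda_i\ge 0$, I would lift $g_i$ through an epigraph variable $u_i$, replacing $\lambda_i g_i(\boldsymbol{y})$ by $\lambda_i u_i$ with $(\boldsymbol{y},u_i)$ constrained to the SOC description of $\mathrm{epi}\,g_i$; because the coefficient $-\lambda_i$ is nonpositive, the maximization drives $u_i$ down to $g_i(\boldsymbol{y})$, so this lift is exact. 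The inner problem is then a conic-linear (hence concave) maximization in $(\boldsymbol{y},\boldsymbol{u})$ over $\varXi$ intersected with the lifted epigraph cones, all SOC-representable. Taking its conic dual and imposing that the dual admit a feasible point of objective $\le 0$ certifies the $\forall\,\boldsymbol{y}$ requirement through finitely many linear and second-order cone constraints; crucially, the $\lambda_i$ enter only as objective coefficients of the inner program, so they appear linearly in the dual equality constraints, and the assembled problem is a linear SOCP.

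For the two-stage case of \cref{ass_cost_func}(ii), $c(\boldsymbol{x},\boldsymbol{y})$ is itself the value of a recourse LP, so the semi-infinite constraint becomes the adaptive feasibility requirement that for every $\boldsymbol{y}\in\varXi$ there exist $\boldsymbol{\gamma}\ge 0$ with $\boldsymbol{T}(\boldsymbol{y})\boldsymbol{x}+\boldsymbol{W}\boldsymbol{\gamma}=h(\boldsymbol{y})$ and $\boldsymbol{q}^T\boldsymbol{\gamma}\le\lambda_0+\sum_i\lambda_i g_i(\boldsymbol{y},\boldsymbol{\alpha}_i)$. I would represent the recourse through an affine (linear decision rule) policy $\boldsymbol{\gamma}(\boldsymbol{y})=\boldsymbol{\gamma}^0+\sum_k\boldsymbol{\gamma}^k y_k$; because $\boldsymbol{T}(\boldsymbol{y})$ and $h(\boldsymbol{y})$ are affine in $\boldsymbol{y}$ for fixed $\boldsymbol{x}$, the equality splits by coefficient matching into finitely many linear equations in $(\boldsymbol{\gamma}^0,\boldsymbol{\gamma}^k,\boldsymbol{x})$, the cost $\boldsymbol{q}^T\boldsymbol{\gamma}(\boldsymbol{y})$ becomes affine in $\boldsymbol{y}$, and the two remaining requirements $\boldsymbol{\gamma}(\boldsymbol{y})\ge 0$ and $\boldsymbol{q}^T\boldsymbol{\gamma}(\boldsymbol{y})\le\lambda_0+\sum_i\lambda_i g_i(\boldsymbol{y})$ become robust constraints with an affine-minus-convex (hence concave) integrand over the SOC-representable $\varXi$. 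These are then dualized exactly as in the single-stage case, and relatively complete recourse is what guarantees the policy is feasible for every $\boldsymbol{y}\in\varXi$ so the reduction is legitimate.

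The hardest part will be this two-stage reformulation. The tempting route of LP-dualizing the recourse to write $c(\boldsymbol{x},\boldsymbol{y})=\max_{\boldsymbol{p}:\,\boldsymbol{W}^T\boldsymbol{p}\le\boldsymbol{q}}\boldsymbol{p}^T(h(\boldsymbol{y})-\boldsymbol{T}(\boldsymbol{y})\boldsymbol{x})$ turns the semi-infinite constraint into a joint robust constraint over $(\boldsymbol{y},\boldsymbol{p})$ whose cost term $\boldsymbol{p}^T\boldsymbol{T}(\boldsymbol{y})\boldsymbol{x}$ is bilinear, making the inner maximization a difference of convex functions rather than a concave program and thereby blocking a direct conic dualization. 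The role of the affine recourse policy is precisely to restore concavity of the inner maximization so that the SOC dualization of the previous paragraph applies; relatively complete recourse keeps the inner value finite so the conic dual exists. Once every robust constraint is conically dualized, the objective remains $\lambda_0+\sum_i\lambda_i\sigma_i$ and all constraints are linear or second-order conic, which establishes that $f(\boldsymbol{x},\mathscr{U}(\boldsymbol{\theta}))$ is a linear second-order cone program, as claimed.
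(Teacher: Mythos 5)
Your treatment of the single-stage case is essentially the paper's own argument: pass to the dual of the moment problem to get $\min_{r,\boldsymbol{\beta}\ge 0} r+\boldsymbol{\beta}^T\boldsymbol{\sigma}$ subject to a semi-infinite constraint, lift each $g_i$ through its SOC-representable epigraph (exact because the multipliers are nonnegative), and conically dualize the inner concave maximization over the lifted support; the only cosmetic difference is that the paper invokes \cref{ass_sla} at the inner conic-duality step (and omits your explicit mention of the constraint $\boldsymbol{y}\ge\boldsymbol{0}$ from \cref{ass_cost_func}(i), which turns one of the dual equalities into an inequality), while you invoke it for the outer moment duality; the same Slater point serves both.

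The gap is in the two-stage case, which the paper disposes of by citing Theorem 1 of \citet{bertsimas2019adaptive} but which you argue via an affine recourse policy $\boldsymbol{\gamma}(\boldsymbol{y})=\boldsymbol{\gamma}^0+\sum_k\boldsymbol{\gamma}^k y_k$. As written this does not prove the theorem, for two reasons. First, relatively complete recourse does \emph{not} guarantee that some affine policy is feasible for all $\boldsymbol{y}\in\varXi$: your coefficient matching forces $\boldsymbol{W}\boldsymbol{\gamma}^k=\boldsymbol{h}_k-\boldsymbol{T}_k\boldsymbol{x}$ for every $k$, which requires each $\boldsymbol{h}_k-\boldsymbol{T}_k\boldsymbol{x}$ to lie in a suitable cone generated by $\boldsymbol{W}$ together with $\boldsymbol{\gamma}(\boldsymbol{y})\ge\boldsymbol{0}$ on all of $\varXi$ --- conditions not implied by pointwise feasibility. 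Second, even when affine policies are feasible, the optimal recourse $\arg\min_{\boldsymbol{\gamma}}$ is piecewise linear in $\boldsymbol{y}$, so restricting to affine $\boldsymbol{\gamma}(\cdot)$ yields only an \emph{upper bound} on $f(\boldsymbol{x},\mathscr{U}(\boldsymbol{\theta}))$; exactness of (extended) linear decision rules is precisely the nontrivial content of \citet{bertsimas2019adaptive} and requires policies affine in the lifted variables $(\boldsymbol{y},\boldsymbol{u})$ under stronger recourse conditions, none of which you establish. A self-contained exact route that stays within your framework is to write $c(\boldsymbol{x},\boldsymbol{y})=\max_{v\in V}v^T\big(h(\boldsymbol{y})-\boldsymbol{T}(\boldsymbol{y})\boldsymbol{x}\big)$ with $V$ the (finite) vertex set of $\{\boldsymbol{p}:\boldsymbol{W}^T\boldsymbol{p}\le\boldsymbol{q}\}$ (the max is attained at a vertex by relatively complete recourse and boundedness); the semi-infinite constraint then splits into one robust constraint per vertex, each affine in $\boldsymbol{y}$ with coefficients linear in $\boldsymbol{x}$ for the fixed $v$, and each dualizes exactly as in your single-stage case. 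This sidesteps the bilinearity you flagged, which arises only if the dual variable $\boldsymbol{p}$ is allowed to range jointly with $\boldsymbol{y}$ rather than being enumerated.
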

\begin{proof}
    See \cref{proof_refor}
\end{proof}
\vspace{-10pt}
Since the parameterized SOC ambiguity set is determined by the finite-dimensional parameter $\boldsymbol{\theta}$, it suffices to use the learning model $M_{\boldsymbol{\phi}}(\boldsymbol{z})$ to learn the parameter $\boldsymbol{\theta}$, \emph{i.e.},
\begin{equation}
    \boldsymbol{\theta}=M_{\boldsymbol{\phi}}(\boldsymbol{z}).
\end{equation}
Therefore, the goal of the differentiable DRO layer comes down to computing $\frac{\partial \boldsymbol{x}^{\ast}}{\partial \boldsymbol{\theta}}$.

By \cref{thm_refor}, the worst-case expectation function $f$ is a linear second-order cone programming. Therefore, if the decision $\boldsymbol{x}$ is continuous, the gradient $\frac{\partial \boldsymbol{x}^{\ast}}{\partial \boldsymbol{\theta}}$ can be directly computed by the technique of differentiating through a cone program \cite{Agrawal2019DifferentiatingTA}. We formally state this result in \cref{diff_con}.

\begin{theorem}
    \label{diff_con}
    Suppose conditions in \cref{thm_refor} hold and $\boldsymbol{x}$ is a continuous variable with $\mathcal{X}$ a second-order cone representable set, then $\boldsymbol{x}^{\ast}=\argmin_{\boldsymbol{x}\in\mathcal{X}} f(\boldsymbol{x},\mathscr{U}(\boldsymbol{\theta}))$ is differentiable with respect to $\boldsymbol{\theta}$.
\end{theorem}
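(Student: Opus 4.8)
The plan is to reduce the entire inner problem to a single linear conic program whose data depend differentiably on $\boldsymbol{\theta}$, and then invoke the cone-program differentiation technique of \citet{Agrawal2019DifferentiatingTA}. First, by \cref{thm_refor} the worst-case expectation $f(\boldsymbol{x},\mathscr{U}(\boldsymbol{\theta}))$ is itself a linear second-order cone program in $\boldsymbol{x}$ together with the dual variables produced by the reformulation. Since $\mathcal{X}$ is assumed second-order cone representable and the epigraph of the objective can likewise be encoded with second-order cone inequalities, the whole problem $\min_{\boldsymbol{x}\in\mathcal{X}} f(\boldsymbol{x},\mathscr{U}(\boldsymbol{\theta}))$ can be stacked into one standard-form conic program
\[
\min_{\boldsymbol{u}}\ \boldsymbol{c}(\boldsymbol{\theta})^{T}\boldsymbol{u}\quad\text{s.t. }\boldsymbol{A}(\boldsymbol{\theta})\boldsymbol{u}+\boldsymbol{s}=\boldsymbol{b}(\boldsymbol{\theta}),\ \boldsymbol{s}\in\mathcal{K},
\]
where $\mathcal{K}$ is a product of second-order cones and a zero cone for the equalities, and $\boldsymbol{u}$ contains $\boldsymbol{x}$ as a sub-block. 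The essential point to record is that, because the matrices $\boldsymbol{A}_j(\boldsymbol{\theta})$ and vectors $\boldsymbol{b}_j(\boldsymbol{\theta})$ of \cref{def_soc_set} are differentiable in $\boldsymbol{\theta}$, the assembled data $(\boldsymbol{A}(\boldsymbol{\theta}),\boldsymbol{b}(\boldsymbol{\theta}),\boldsymbol{c}(\boldsymbol{\theta}))$ are differentiable functions of $\boldsymbol{\theta}$.

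Next I would set up the homogeneous self-dual embedding of this conic program. Writing $\boldsymbol{Q}(\boldsymbol{\theta})$ for the skew-symmetric matrix built from $(\boldsymbol{A},\boldsymbol{b},\boldsymbol{c})$ and letting $\Pi$ denote Euclidean projection onto the embedding cone, the primal-dual optimal solution is encoded by a point $\boldsymbol{z}$ solving the residual equation
\[
\mathcal{N}(\boldsymbol{z},\boldsymbol{\theta})=\big(\boldsymbol{Q}(\boldsymbol{\theta})-\boldsymbol{I}\big)\Pi(\boldsymbol{z})+\boldsymbol{z}=\boldsymbol{0},
\]
from which $\boldsymbol{x}^{\ast}$ is recovered by selecting and normalizing the appropriate blocks of $\Pi(\boldsymbol{z})$, a smooth operation wherever the homogenizing variable is positive. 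I would then apply the implicit function theorem to $\mathcal{N}$: since $\boldsymbol{Q}(\boldsymbol{\theta})$ is differentiable in $\boldsymbol{\theta}$ and the projection onto a product of second-order cones is differentiable away from a measure-zero set of boundary points, the map $\boldsymbol{\theta}\mapsto\boldsymbol{z}^{\ast}(\boldsymbol{\theta})$, and hence $\boldsymbol{\theta}\mapsto\boldsymbol{x}^{\ast}(\boldsymbol{\theta})$, is differentiable provided the Jacobian $\partial_{\boldsymbol{z}}\mathcal{N}=(\boldsymbol{Q}-\boldsymbol{I})\,D\Pi(\boldsymbol{z})+\boldsymbol{I}$ is nonsingular at the solution.

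The main obstacle is precisely this regularity requirement: one must guarantee that the solution $\boldsymbol{z}^{\ast}$ lands where $\Pi$ is differentiable and where $\partial_{\boldsymbol{z}}\mathcal{N}$ is invertible. I would discharge it by appealing to the standing hypotheses: Slater's condition (\cref{ass_sla}) yields strong duality and a well-posed self-dual embedding, while the well-definedness of \eqref{bilevel} --- the argmin being a singleton --- rules out primal degeneracy. Under these conditions the solution is non-degenerate in the sense required by \citet{Agrawal2019DifferentiatingTA}, so their derivative formula applies and delivers $\frac{\partial \boldsymbol{x}^{\ast}}{\partial\boldsymbol{\theta}}$; the remainder is the routine chain rule through $(\boldsymbol{A},\boldsymbol{b},\boldsymbol{c})(\boldsymbol{\theta})$.
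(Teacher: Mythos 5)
Your proposal is correct and follows essentially the same route as the paper: both reduce $\min_{\boldsymbol{x}\in\mathcal{X}} f(\boldsymbol{x},\mathscr{U}(\boldsymbol{\theta}))$ to a single linear second-order cone program whose data $(\boldsymbol{A}_j(\boldsymbol{\theta}),\boldsymbol{b}_j(\boldsymbol{\theta}))$ are differentiable in $\boldsymbol{\theta}$ by \cref{def_soc_set}, invoke the cone-program differentiation result of \citet{Agrawal2019DifferentiatingTA}, and finish with the chain rule. The only difference is that you unpack the self-dual embedding and the attendant non-degeneracy caveat explicitly, whereas the paper cites that machinery as a black box; your discussion of when the Jacobian is invertible is, if anything, more careful than the paper's own argument.
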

\vspace{-5pt}
\begin{proof}
    See \cref{proof_diff_con}
\end{proof}
\vspace{-10pt}
However, when the decision $\boldsymbol{x}$ is mixed-integer, it is inherently non-differentiable due to the discreteness of integer variables. Therefore, it is necessary to develop a new methodology to handle mixed-integer decisions.

\subsection{Dual-View of Mixed-Integer Decisions}

Existing research on differentiable optimization layers all views the decision $\boldsymbol{x}^{\ast}$ as a function of the parameter and handles it by the principle of automatic differentiation. However, this view does not work for discrete decisions. Although some research manages to differentiate the discrete decisions by adding perturbations \cite{berthet2020learning}, these approaches are still restricted to integer linear programming. 

Alternatively, by examining the bilevel formulation (\ref{bilevel}) of the whole decision-focused learning task, we notice that the decision-making process is the lower-level problem. Therefore, the decision $\boldsymbol{x}^{\ast}$ can be viewed as a constraint, and we can handle it via the principle of constrained optimization.

By the above observations, we propose the following dual-view methodology to address mixed-integer decisions.

\textbf{Dual-View Methodology:}
\begin{enumerate}[I]
    \item The continuous part of the decisions is viewed as a function of parameters and handled via the principle of automatic differentiation.\label{dual_con}
    \item The discrete part of the decisions is viewed as a constraint of the whole bilevel learning problem and handled via the principle of constrained optimization.\label{dual_int}
\end{enumerate}

In this dual-view methodology, we have already established part \ref{dual_con} in \cref{diff_con}. To better illustrate the idea of part \ref{dual_int}, we make the following assumptions and notations.

\begin{assumption}\label{assum_mixed_int}
    $\boldsymbol{x}=(\boldsymbol{x}_d,\boldsymbol{x}_c)$ is a mixed-integer variable with discrete part $\boldsymbol{x}_d\in \{ 0,1 \}^{n_1}$ and continuous part $\boldsymbol{x}_c\in \mathbb{R}^{n_2}$. The feasible region of $\boldsymbol{x}$ is $\mathcal{X}=\overline{\mathcal{X}}\cap (\{ 0,1 \}^{n_1} \otimes \mathbb{R}^{n_2})$, where $\overline{\mathcal{X}}$ is a second-order cone representable set.
\end{assumption}

We denote by $\mathcal{X}_d$ the feasible region of the discrete part of variable $\boldsymbol{x}$, \emph{i.e.}, $\mathcal{X}_d=\{ \boldsymbol{x}_d \in  \{ 0,1 \}^{n_1} | \exists \boldsymbol{x}_c \in  \mathbb{R}^{n_2}: (\boldsymbol{x}_d,\boldsymbol{x}_c) \in \mathcal{X}  \}$, and by $\mathcal{X}_c(\boldsymbol{x}_d)$ the feasible region of the continuous part variable $\boldsymbol{x}_c$ given the integer part $\boldsymbol{x}_d\in\mathcal{X}_d$, \emph{i.e.}, $\mathcal{X}_c(\boldsymbol{x}_d)=\{ \boldsymbol{x}_c \in  \mathbb{R}^{n_2} |  (\boldsymbol{x}_d,\boldsymbol{x}_c) \in \mathcal{X} \}$.

The next assumption ensures that the bilevel problem (\ref{bilevel}) is well-defined, and see \cref{app_diss_unique} for a detailed discussion of this assumption.

\begin{assumption}\label{assum_unique_sol}
    (i) For all $\boldsymbol{\phi}\in \Phi$, $\boldsymbol{z}\in \mathcal{Z}$, and $\boldsymbol{x}_d\in \mathcal{X}_d$, the optimal continuous solution 
    \begin{equation}\nonumber
        \boldsymbol{x}_c^{\ast}(\boldsymbol{x}_d,M_{\boldsymbol{\phi}}(\boldsymbol{z})):=\argmin_{\boldsymbol{x}_c\in \mathcal{X}_c(\boldsymbol{x}_d)} f\big((\boldsymbol{x}_d,\boldsymbol{x}_c),M_{\boldsymbol{\phi}}(\boldsymbol{z})\big)
    \end{equation}
    is unique.

    (ii) For all $\boldsymbol{\phi}\in \Phi$, the optimal integer solution 
    \begin{equation}\nonumber
        \boldsymbol{x}_d^{\ast}(M_{\boldsymbol{\phi}}(\boldsymbol{z})):= \argmin_{\boldsymbol{x}_d\in \mathcal{X}_d} f\big((\boldsymbol{x}_d,\boldsymbol{x}_c^{\ast}(\boldsymbol{x}_d,M_{\boldsymbol{\phi}}(\boldsymbol{z})),M_{\boldsymbol{\phi}}(\boldsymbol{z})\big)
    \end{equation}
    is unique almost surely, \emph{i.e.},
    \begin{equation}\nonumber
        \forall \boldsymbol{\phi}\in \Phi, \mathbb{P}_{\boldsymbol{z}}\big( \boldsymbol{x}_d^{\ast}(M_{\boldsymbol{\phi}}(\boldsymbol{z})) \text{ is a singleton} \big)=1
    \end{equation}
    where $\mathbb{P}_{\boldsymbol{z}}$ is the marginal distribution of covariate $\boldsymbol{z}$.
\end{assumption}

By the above assumptions, the bilevel problem (\ref{bilevel}) can be reformulated as follows.
\begin{corollary}
    Suppose \cref{assum_mixed_int} and \cref{assum_unique_sol} hold, the decision-focused learning can be formulated as
    \begin{equation}\label{bilevel_refor}
        \begin{gathered}
            \min_{\boldsymbol{\phi}\in \Phi}\ \  \mathbb{E}_{(\boldsymbol{z},\boldsymbol{y})\sim \mathbb{P}}\ l\Big(\big(\boldsymbol{x}_d^{\ast}(M_{\boldsymbol{\phi}}(\boldsymbol{z})),\boldsymbol{x}_c^{\ast}(\boldsymbol{x}_d^{\ast},M_{\boldsymbol{\phi}}(\boldsymbol{z}))\big),\boldsymbol{y}\Big)\\
    \text{s.t. } \boldsymbol{x}_d^{\ast}(M_{\boldsymbol{\phi}}(\boldsymbol{z}))\hspace{150pt} \\ \hspace{30pt}=\argmin_{\boldsymbol{x}_d\in \mathcal{X}_d} f\big((\boldsymbol{x}_d,\boldsymbol{x}_c^{\ast}(\boldsymbol{x}_d,M_{\boldsymbol{\phi}}(\boldsymbol{z}))),M_{\boldsymbol{\phi}}(\boldsymbol{z})\big)
        \end{gathered}
    \end{equation}
\end{corollary}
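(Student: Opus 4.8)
The plan is to prove the corollary by the standard principle of partial (nested) minimization, invoking the two uniqueness conditions of \cref{assum_unique_sol} only to guarantee that every argmin operator in the reformulation is a well-defined singleton. The entire argument reduces to showing that the lower-level problem $\boldsymbol{x}^{\ast}(M_{\boldsymbol{\phi}}(\boldsymbol{z}))=\argmin_{\boldsymbol{x}\in\mathcal{X}}f(\boldsymbol{x},M_{\boldsymbol{\phi}}(\boldsymbol{z}))$ of (\ref{bilevel}) can be solved by first optimizing the continuous block and then the discrete block.

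First I would record the feasible-set decomposition. By the definitions of $\mathcal{X}_d$ and $\mathcal{X}_c(\boldsymbol{x}_d)$ introduced after \cref{assum_mixed_int}, one has $\mathcal{X}=\{(\boldsymbol{x}_d,\boldsymbol{x}_c):\boldsymbol{x}_d\in\mathcal{X}_d,\ \boldsymbol{x}_c\in\mathcal{X}_c(\boldsymbol{x}_d)\}$, so the joint feasible set is partitioned according to the value of the integer block. This lets me split the single minimization over $\boldsymbol{x}$ into an inner minimization over $\boldsymbol{x}_c\in\mathcal{X}_c(\boldsymbol{x}_d)$ and an outer minimization over $\boldsymbol{x}_d\in\mathcal{X}_d$, i.e. $\min_{\boldsymbol{x}\in\mathcal{X}}f=\min_{\boldsymbol{x}_d\in\mathcal{X}_d}\big(\min_{\boldsymbol{x}_c\in\mathcal{X}_c(\boldsymbol{x}_d)}f\big)$.

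Next I would handle the inner problem. For each fixed $\boldsymbol{x}_d\in\mathcal{X}_d$, \cref{assum_unique_sol}(i) guarantees that the continuous minimizer $\boldsymbol{x}_c^{\ast}(\boldsymbol{x}_d,M_{\boldsymbol{\phi}}(\boldsymbol{z}))$ is unique, so the inner minimum value equals $f\big((\boldsymbol{x}_d,\boldsymbol{x}_c^{\ast}(\boldsymbol{x}_d,M_{\boldsymbol{\phi}}(\boldsymbol{z}))),M_{\boldsymbol{\phi}}(\boldsymbol{z})\big)$. Substituting this into the outer minimization turns the lower-level problem into $\min_{\boldsymbol{x}_d\in\mathcal{X}_d}f\big((\boldsymbol{x}_d,\boldsymbol{x}_c^{\ast}(\boldsymbol{x}_d,M_{\boldsymbol{\phi}}(\boldsymbol{z}))),M_{\boldsymbol{\phi}}(\boldsymbol{z})\big)$, whose argmin is exactly $\boldsymbol{x}_d^{\ast}(M_{\boldsymbol{\phi}}(\boldsymbol{z}))$ as defined in \cref{assum_unique_sol}(ii); that assumption in turn ensures this argmin is a singleton $\mathbb{P}_{\boldsymbol{z}}$-almost surely. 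Recovering the paired continuous part as $\boldsymbol{x}_c^{\ast}(\boldsymbol{x}_d^{\ast},M_{\boldsymbol{\phi}}(\boldsymbol{z}))$ then reconstructs the full optimizer $\boldsymbol{x}^{\ast}=(\boldsymbol{x}_d^{\ast},\boldsymbol{x}_c^{\ast})$, which I substitute into the upper-level objective of (\ref{bilevel}) to obtain the objective of (\ref{bilevel_refor}).

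The main obstacle is not the value identity, which is routine, but verifying that the nested scheme returns the same point as the joint argmin for almost every $\boldsymbol{z}$: I must check that the jointly-optimal $(\boldsymbol{x}_d,\boldsymbol{x}_c)$ is precisely $(\boldsymbol{x}_d^{\ast},\boldsymbol{x}_c^{\ast}(\boldsymbol{x}_d^{\ast},M_{\boldsymbol{\phi}}(\boldsymbol{z})))$. This follows by chaining the two uniqueness statements: if two joint optima share the same optimal value, their integer blocks both attain the outer minimum and hence coincide off a $\mathbb{P}_{\boldsymbol{z}}$-null set by condition (ii), after which their continuous blocks both minimize the same inner problem and coincide by condition (i). Thus the joint optimizer is a.s. unique and equals the nested optimizer. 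I would finish by noting that, since the upper-level objective is an expectation over $\boldsymbol{z}$, altering the decision on a null set leaves its value unchanged, so (\ref{bilevel}) and (\ref{bilevel_refor}) share the same objective and constraint and are therefore equivalent.
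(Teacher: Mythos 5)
Your proposal is correct and follows essentially the same route the paper implicitly takes: the paper states this corollary without a separate proof, treating it as an immediate consequence of the partial-minimization identity $\min_{\boldsymbol{x}\in\mathcal{X}}f=\min_{\boldsymbol{x}_d\in\mathcal{X}_d}\min_{\boldsymbol{x}_c\in\mathcal{X}_c(\boldsymbol{x}_d)}f$ together with Assumption~4.5, which is exactly your argument. Your extra care in verifying that the nested argmin coincides with the joint argmin $\mathbb{P}_{\boldsymbol{z}}$-almost surely is a correct and slightly more explicit treatment of the same point.
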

From the bilevel form (\ref{bilevel_refor}) we can see more clearly the idea of dual-view methodology. The optimal continuous decision $\boldsymbol{x}_c^{\ast}$ is embedded as a function of the integer decision and learning target $M_{\boldsymbol{\phi}}(\boldsymbol{z})$. On the contrary, the optimal integer solution $\boldsymbol{x}_d^{\ast}$ is explicitly expressed as a constraint.

Let $\mathcal{R}(\boldsymbol{\phi})$ denote the value function of problem (\ref{bilevel_refor}), \emph{i.e.},
\begin{equation}\nonumber
    \begin{gathered}
       \mathcal{R}(\boldsymbol{\phi}):= \mathbb{E}_{(\boldsymbol{z},\boldsymbol{y})\sim \mathbb{P}}\ l\Big(\big(\boldsymbol{x}_d^{\ast}(M_{\boldsymbol{\phi}}(\boldsymbol{z})),\boldsymbol{x}_c^{\ast}(\boldsymbol{x}_d^{\ast},M_{\boldsymbol{\phi}}(\boldsymbol{z}))\big),\boldsymbol{y}\Big)
    \end{gathered}
\end{equation}
Then problem (\ref{bilevel_refor}) is equivalent to $\min_{\boldsymbol{\phi} \in \Phi}\mathcal{R}(\boldsymbol{\phi})$.

Following Part \ref{dual_int} in the dual-view methodology, we handle the constrained optimization (\ref{bilevel_refor}) by approximating the value function sequentially. That is,
we want to construct a sequence of differentiable surrogate value functions $\mathcal{R}_{\lambda}(\boldsymbol{\phi})$ such that $\mathcal{R}_{\lambda}(\boldsymbol{\phi})\to \mathcal{R}(\boldsymbol{\phi})$ in some sense.
\subsection{Energy-Based Surrogate Value Function}

To construct such a surrogate function $\mathcal{R}_{\lambda}(\boldsymbol{\phi})$, we first construct point surrogate function $r_{\lambda}(M_{\boldsymbol{\phi}}(\boldsymbol{z}),\boldsymbol{y})$ for each point $(\boldsymbol{z},\boldsymbol{y})$ and then define $\mathcal{R}_{\lambda}(\boldsymbol{\phi})$ as
\begin{equation}
    \mathcal{R}_{\lambda}(\boldsymbol{\phi})=\mathbb{E}_{(\boldsymbol{z},\boldsymbol{y})\sim \mathbb{P}} \ r_{\lambda}(M_{\boldsymbol{\phi}}(\boldsymbol{z}),\boldsymbol{y})
\end{equation}

We construct the point surrogate function $r_{\lambda}(\boldsymbol{\phi},\boldsymbol{z},\boldsymbol{y})$ by leveraging the energy-based model.
Specifically, we assign each feasible integer decision $\boldsymbol{x}_d\in \mathcal{X}_d$ the following energy function $E(\boldsymbol{x}_d,M_{\boldsymbol{\phi}}(\boldsymbol{z}),\lambda)$,
where $\lambda$ is a positive scalar.
\begin{equation}\label{eneger_defnm}
\begin{aligned}
    E(\boldsymbol{x}_d,&M_{\boldsymbol{\phi}}(\boldsymbol{z}),\lambda)\\=&\text{exp}\left( -\frac{f((\boldsymbol{x}_d,\boldsymbol{x}_c^{\ast}(\boldsymbol{x}_d,M_{\boldsymbol{\phi}}(\boldsymbol{z})),M_{\boldsymbol{\phi}}(\boldsymbol{z}))}{\lambda}  \right)
\end{aligned}
\end{equation}

Based on the energy function, we can define a distribution $p(\boldsymbol{x}_d|M_{\boldsymbol{\phi}}(\boldsymbol{z}),\lambda)$ over $\mathcal{X}_d$.
\begin{equation}\label{energy_distri}
    p(\boldsymbol{x}_d|M_{\boldsymbol{\phi}}(\boldsymbol{z}),\lambda)=\frac{E(\boldsymbol{x}_d,M_{\boldsymbol{\phi}}(\boldsymbol{z}),\lambda)}{\sum_{\boldsymbol{x}_d^{'}\in \mathcal{X}_d} E(\boldsymbol{x}_d^{'},M_{\boldsymbol{\phi}}(\boldsymbol{z}),\lambda)}
\end{equation}

We then construct point surrogate function $r_{\lambda}(\boldsymbol{\phi},\boldsymbol{z},\boldsymbol{y})$ 
as follows.\allowdisplaybreaks
\begin{align}
    &r_{\lambda}(M_{\boldsymbol{\phi}}(\boldsymbol{z}),\boldsymbol{y})\nonumber\\&:
=\mathbb{E}_{\boldsymbol{x}_d\sim p(\boldsymbol{x}_d|M_{\boldsymbol{\phi}}(\boldsymbol{z}),\lambda)}l((\boldsymbol{x}_d,\boldsymbol{x}_c^{\ast}(\boldsymbol{x}_d,M_{\boldsymbol{\phi}}(\boldsymbol{z}))),\boldsymbol{y})\\
    &=\sum_{\boldsymbol{x}_d\in \mathcal{X}_d}p(\boldsymbol{x}_d|M_{\boldsymbol{\phi}}(\boldsymbol{z}),\lambda) l((\boldsymbol{x}_d,\boldsymbol{x}_c^{\ast}(\boldsymbol{x}_d,M_{\boldsymbol{\phi}}(\boldsymbol{z}))),\boldsymbol{y})\nonumber
\end{align}    

By the above construction, we notice that the optimal integer solution $\boldsymbol{x}_d^{\ast}(M_{\boldsymbol{\phi}}(\boldsymbol{z}))$ has the largest energy, so the corresponding probability $p(\boldsymbol{x}_d^{\ast}|M_{\boldsymbol{\phi}}(\boldsymbol{z}),\lambda)$ is also the highest. When $\lambda\to 0^{+}$, this probability will converge to 1, and $r_{\lambda}(\boldsymbol{\phi},\boldsymbol{z},\boldsymbol{y})$ will also converge to the true decision loss
$l\big((\boldsymbol{x}_d^{\ast}(M_{\boldsymbol{\phi}}(\boldsymbol{z})),\boldsymbol{x}_c^{\ast}(\boldsymbol{x}_d^{\ast},M_{\boldsymbol{\phi}}(\boldsymbol{z}))),\boldsymbol{y}\big)$.

To further establish convergence results of the surrogate value function $\mathcal{R}_{\lambda}(\boldsymbol{\phi})$, we need the following continuity assumptions and the concept of epi-convergence.
\begin{assumption}\label{assum_bound}
    The decision loss $l((\boldsymbol{x}_d,\boldsymbol{x}_c),\boldsymbol{y})$ is bounded and continuous in $\boldsymbol{x}_c$. For any $\boldsymbol{z}\in \mathcal{Z}$, $f((\boldsymbol{x}_d,\boldsymbol{x}_c),M_{\boldsymbol{\phi}}(\boldsymbol{z}))$ is continuous in $\boldsymbol{x}_c$ and $\boldsymbol{\phi}$. 
\end{assumption}
\begin{assumption}\label{assum_opt_con}
    For all $\boldsymbol{x}_d\in \mathcal{X}_d$ and $\boldsymbol{z}\in \mathcal{Z}$, the optimal continuous decision $\boldsymbol{x}_c^{\ast}(\boldsymbol{x}_d,M_{\boldsymbol{\phi}}(\boldsymbol{z}))$ is continuous in $\boldsymbol{\phi}$.
\end{assumption}
We note that \cref{assum_bound} is easily satisfied. For \cref{assum_opt_con}, since  $\boldsymbol{x}_c^{\ast}(\boldsymbol{x}_d,M_{\boldsymbol{\phi}}(\boldsymbol{z}))$ is differentiable with respect to $\boldsymbol{\theta}=M_{\boldsymbol{\phi}}(\boldsymbol{z})$ by \cref{diff_con}, \cref{assum_opt_con} simply requires the continuity of $M_{\boldsymbol{\phi}}$ in its parameter $\boldsymbol{\phi}$.

\begin{definition}
    [\citet{bonnans2013perturbation}, p.41] A sequence of functions $\{\mathcal{R}_{n}(\boldsymbol{\phi})\}$ epi-converges to a function $\mathcal{R}(\boldsymbol{\phi})$ if and only if $\forall \boldsymbol{\phi} \in \Phi$, condition (i) and (ii) hold.
\begin{enumerate}[(i)]
    \item For any sequence $\{ \boldsymbol{\phi}_n \}$ converges to $\boldsymbol{\phi}$, $\liminf_{n\to \infty} \mathcal{R}_{n}(\boldsymbol{\phi}_n)\geq  \mathcal{R}(\boldsymbol{\phi})$
    \item There exists a sequence $\{ \boldsymbol{\phi}_n \}$ converging to $\boldsymbol{\phi}$ such that $\limsup_{n\to \infty}\mathcal{R}_{n}(\boldsymbol{\phi}_n)\leq  \mathcal{R}(\boldsymbol{\phi})$
\end{enumerate}
\end{definition}

The epi-convergence of $\mathcal{R}_{\lambda}(\boldsymbol{\phi})$ and asymptotic convergence of optimal solution are established in \cref{thm_epi_converge}.
\begin{theorem}\label{thm_epi_converge}
\allowdisplaybreaks
    Suppose \cref{assum_mixed_int}, \cref{assum_unique_sol}, \cref{assum_bound}, and \cref{assum_opt_con} hold, then for any sequence $\lambda_n \searrow 0^+$ as $n \to \infty$,  the following two
assertions hold for the energy-based surrogate value function $\mathcal{R}_{\lambda}(\boldsymbol{\phi})$.

(i) $\mathcal{R}_{\lambda_n}(\boldsymbol{\phi})$ epi-converges to $\mathcal{R}(\boldsymbol{\phi})$ as $n \to \infty$.

(ii) if $\boldsymbol{\phi}_{\lambda_{n_k}}\in \argmin_{\boldsymbol{\phi}\in \Phi} \mathcal{R}_{\lambda_{n_k}}(\boldsymbol{\phi})$ for some sub-sequence $\{ n_k \}\subset \mathbb{N}$ and $\{ \boldsymbol{\phi}_{\lambda_{n_k}}\}$ converges to a point $\boldsymbol{\phi}^{\ast}$, then $\boldsymbol{\phi}^{\ast}\in \argmin_{\boldsymbol{\phi}\in \Phi} \mathcal{R}(\boldsymbol{\phi})$ and $\lim_{k\to \infty}\inf_{\boldsymbol{\phi}\in \Phi}\mathcal{R}_{\lambda_{n_k}}(\boldsymbol{\phi})=\inf_{\boldsymbol{\phi}\in \Phi}\mathcal{R}(\boldsymbol{\phi})$
\end{theorem}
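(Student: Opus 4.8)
The plan is to prove the stronger property that $\mathcal{R}_{\lambda_n}$ converges \emph{continuously} to $\mathcal{R}$ (i.e. $\boldsymbol{\phi}_n \to \boldsymbol{\phi}$ and $\lambda_n \searrow 0^+$ imply $\mathcal{R}_{\lambda_n}(\boldsymbol{\phi}_n)\to\mathcal{R}(\boldsymbol{\phi})$), from which assertion (i) follows at once, and then to derive (ii) from the classical variational consequence of epi-convergence. The whole argument hinges on two mechanisms: the finiteness of $\mathcal{X}_d$, which lets the low-temperature softmax concentrate uniformly, and the almost-sure uniqueness in \cref{assum_unique_sol}, which controls the exceptional set where the integer optimum ties.

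First I would work at the level of the point surrogate. Abbreviate $F(\boldsymbol{x}_d,\boldsymbol{\phi},\boldsymbol{z}):=f\big((\boldsymbol{x}_d,\boldsymbol{x}_c^{\ast}(\boldsymbol{x}_d,M_{\boldsymbol{\phi}}(\boldsymbol{z}))),M_{\boldsymbol{\phi}}(\boldsymbol{z})\big)$ and $L(\boldsymbol{x}_d,\boldsymbol{\phi},\boldsymbol{z},\boldsymbol{y}):=l\big((\boldsymbol{x}_d,\boldsymbol{x}_c^{\ast}(\boldsymbol{x}_d,M_{\boldsymbol{\phi}}(\boldsymbol{z}))),\boldsymbol{y}\big)$; by \cref{assum_bound}, \cref{assum_opt_con} and the differentiability in \cref{diff_con}, both are continuous in $\boldsymbol{\phi}$ for each fixed $\boldsymbol{x}_d\in\mathcal{X}_d$. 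Fix $\boldsymbol{\phi}$ and a point $(\boldsymbol{z},\boldsymbol{y})$ at which the integer optimum $\boldsymbol{x}_d^{\ast}=\boldsymbol{x}_d^{\ast}(M_{\boldsymbol{\phi}}(\boldsymbol{z}))$ is unique, so that the gap $\Delta:=\min_{\boldsymbol{x}_d\neq\boldsymbol{x}_d^{\ast}}F(\boldsymbol{x}_d,\boldsymbol{\phi},\boldsymbol{z})-F(\boldsymbol{x}_d^{\ast},\boldsymbol{\phi},\boldsymbol{z})>0$. For $\boldsymbol{\phi}_n\to\boldsymbol{\phi}$, continuity of $F$ together with the finiteness of $\mathcal{X}_d$ keeps $\boldsymbol{x}_d^{\ast}$ the strict minimizer for large $n$ with gap $\Delta_n\to\Delta$, hence $\Delta_n\ge\Delta/2$ eventually. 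The softmax weight then obeys $p(\boldsymbol{x}_d^{\ast}\mid M_{\boldsymbol{\phi}_n}(\boldsymbol{z}),\lambda_n)\ge\big(1+(|\mathcal{X}_d|-1)\exp(-\Delta_n/\lambda_n)\big)^{-1}\to 1$, while every other weight vanishes. Since $L$ is continuous in $\boldsymbol{\phi}$ and uniformly bounded, $r_{\lambda_n}(M_{\boldsymbol{\phi}_n}(\boldsymbol{z}),\boldsymbol{y})\to L(\boldsymbol{x}_d^{\ast},\boldsymbol{\phi},\boldsymbol{z},\boldsymbol{y})$, which is precisely the true point loss. By \cref{assum_unique_sol}(ii) this holds for $\mathbb{P}$-almost every $(\boldsymbol{z},\boldsymbol{y})$.

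To lift this to $\mathcal{R}_{\lambda_n}$, I would note that each $r_{\lambda_n}(M_{\boldsymbol{\phi}_n}(\boldsymbol{z}),\boldsymbol{y})$ is a convex combination of values of $l$, hence uniformly bounded by \cref{assum_bound}; the dominated convergence theorem then yields $\mathcal{R}_{\lambda_n}(\boldsymbol{\phi}_n)\to\mathcal{R}(\boldsymbol{\phi})$, i.e. continuous convergence. Both epi-conditions are now immediate: the $\liminf$ inequality holds with equality for every sequence $\boldsymbol{\phi}_n\to\boldsymbol{\phi}$, and the constant sequence serves as a recovery sequence for the $\limsup$ inequality, establishing (i). I expect the main obstacle to be exactly the uniform gap control of the preceding paragraph — ensuring the concentration is not destroyed as $\boldsymbol{\phi}_n$ moves, and verifying that the $\mathbb{P}$-null set of tie-breaking configurations (where $\Delta=0$) is harmless under the expectation; this is where \cref{assum_unique_sol} is indispensable, since without almost-sure uniqueness the limit of $r_{\lambda_n}$ could fail to be the true loss on a set of positive measure.

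Finally, for (ii) I would invoke the standard epi-convergence argument (see \citet{bonnans2013perturbation}). Let $\boldsymbol{\phi}_{\lambda_{n_k}}\in\argmin_{\boldsymbol{\phi}}\mathcal{R}_{\lambda_{n_k}}(\boldsymbol{\phi})$ with $\boldsymbol{\phi}_{\lambda_{n_k}}\to\boldsymbol{\phi}^{\ast}$. Applying condition (i) of epi-convergence at $\boldsymbol{\phi}^{\ast}$ along this minimizing sequence gives $\mathcal{R}(\boldsymbol{\phi}^{\ast})\le\liminf_k\mathcal{R}_{\lambda_{n_k}}(\boldsymbol{\phi}_{\lambda_{n_k}})=\liminf_k\inf_{\boldsymbol{\phi}}\mathcal{R}_{\lambda_{n_k}}(\boldsymbol{\phi})$. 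Conversely, for any fixed $\boldsymbol{\psi}\in\Phi$, condition (ii) supplies a recovery sequence $\boldsymbol{\psi}_k\to\boldsymbol{\psi}$ with $\mathcal{R}_{\lambda_{n_k}}(\boldsymbol{\phi}_{\lambda_{n_k}})\le\mathcal{R}_{\lambda_{n_k}}(\boldsymbol{\psi}_k)$, whence $\limsup_k\inf_{\boldsymbol{\phi}}\mathcal{R}_{\lambda_{n_k}}(\boldsymbol{\phi})\le\mathcal{R}(\boldsymbol{\psi})$; taking the infimum over $\boldsymbol{\psi}$ gives $\limsup_k\inf_{\boldsymbol{\phi}}\mathcal{R}_{\lambda_{n_k}}(\boldsymbol{\phi})\le\inf_{\boldsymbol{\phi}}\mathcal{R}(\boldsymbol{\phi})$. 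Chaining the two inequalities sandwiches everything between $\inf_{\boldsymbol{\phi}}\mathcal{R}(\boldsymbol{\phi})$ and itself, forcing $\mathcal{R}(\boldsymbol{\phi}^{\ast})=\inf_{\boldsymbol{\phi}}\mathcal{R}(\boldsymbol{\phi})$, i.e. $\boldsymbol{\phi}^{\ast}\in\argmin_{\boldsymbol{\phi}}\mathcal{R}(\boldsymbol{\phi})$, together with $\lim_{k\to\infty}\inf_{\boldsymbol{\phi}}\mathcal{R}_{\lambda_{n_k}}(\boldsymbol{\phi})=\inf_{\boldsymbol{\phi}}\mathcal{R}(\boldsymbol{\phi})$, as claimed.
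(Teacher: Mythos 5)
Your proposal is correct and follows essentially the same route as the paper: both establish continuous convergence $\mathcal{R}_{\lambda_n}(\boldsymbol{\phi}_n)\to\mathcal{R}(\boldsymbol{\phi})$ by using the almost-sure uniqueness of the integer optimum to get a positive optimality gap, the continuity of $f^{\ast}$ in $\boldsymbol{\phi}$ to preserve that gap along $\boldsymbol{\phi}_n\to\boldsymbol{\phi}$, and the resulting concentration of the softmax weights, before passing to the limit under the expectation and invoking the standard variational consequence of epi-convergence for (ii). The only difference is bookkeeping: where the paper builds the increasing sets $Y(\boldsymbol{\phi},N)$ with a uniform gap and splits the expectation by an $\epsilon/\Psi$ argument, you apply dominated convergence directly to the pointwise a.e.\ limit, which is a slightly cleaner but equivalent way to justify the same interchange of limit and expectation.
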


\begin{proof}
    See \cref{proof_thm_epi_converge}
\end{proof}
\vspace{-15pt}

\begin{figure*}[btp]
\begin{center}
\centerline{\includegraphics[width=\textwidth]{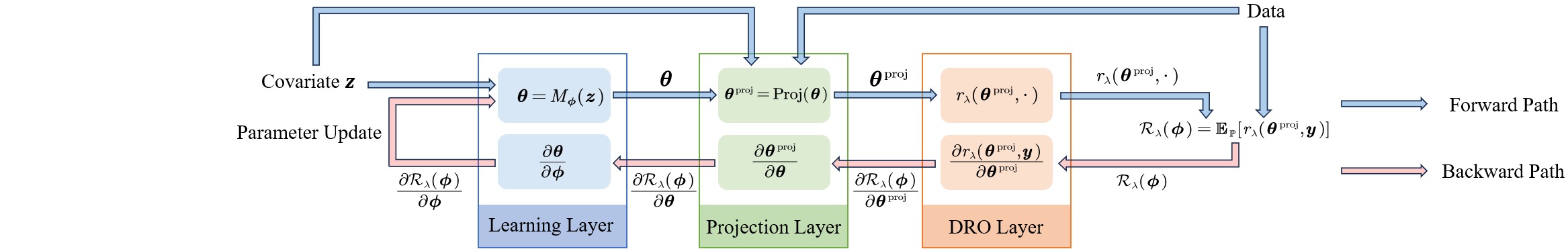}}
\caption{Decision-focused learning pipeline for contextual distributionally robust decision-making.}
\label{DF_learning}
\end{center}
\vspace{-20pt}
\end{figure*}
\subsection{Gradient Estimation}

According to \cref{thm_epi_converge}, the optimal parameter $\boldsymbol{\phi}$ of the learning model $M_{\boldsymbol{\phi}}$ can be derived by optimizing the surrogate value functions $\mathcal{R}_{\lambda}(\boldsymbol{\phi})$ sequentially. We next show in \cref{thm_diff_mixed} that the surrogate value function $\mathcal{R}_{\lambda}(\boldsymbol{\phi})$ is differentiable, so it can be optimized via gradient descent.

\begin{theorem}
    \label{thm_diff_mixed}
    Suppose conditions in \cref{thm_refor} and \cref{thm_epi_converge} hold, then $\mathcal{R}_{\lambda}(\boldsymbol{\phi})$ is differentiable with respect to $\boldsymbol{\phi}$ and the gradient is
    \begin{equation}
        \frac{\partial \mathcal{R}_{\lambda}(\boldsymbol{\phi})}{\partial \boldsymbol{\phi}} =\mathbb{E}_{(\boldsymbol{z},\boldsymbol{y})\sim \mathbb{P}}\bigg[ \frac{\partial r_{\lambda}(\boldsymbol{\theta},\boldsymbol{y})}{\partial \boldsymbol{\theta}} \frac{\partial \boldsymbol{\theta}}{\partial \boldsymbol{\phi}}\bigg]
    \end{equation}
    where $\boldsymbol{\theta}=M_{\boldsymbol{\phi}}(\boldsymbol{z})$ is the learning target, and $\frac{\partial r_{\lambda}(\boldsymbol{\theta},\boldsymbol{y})}{\partial \boldsymbol{\theta}}$ can be computed by\allowdisplaybreaks
    \begin{equation}\label{expect_grad} \allowdisplaybreaks
        \begin{aligned}
            &\  \ \ \frac{\partial r_{\lambda}(\boldsymbol{\theta},\boldsymbol{y})}{\partial \boldsymbol{\theta}}\\&=
            \mathbb{E}_{\boldsymbol{x_d}\sim p}  \Bigg[ \frac{E^{'} (\boldsymbol{x_d},\boldsymbol{\theta},\lambda)}{E (\boldsymbol{x_d},\boldsymbol{\theta},\lambda)}l\Big(\big(\boldsymbol{x_d},\boldsymbol{x_c}^{\ast}(\boldsymbol{x_d},\boldsymbol{\theta})\big),\boldsymbol{y}\Big)\Bigg]- \\&\mathbb{E}_{\boldsymbol{x_d}\sim p}\bigg[  \frac{E^{'} (\boldsymbol{x_d},\boldsymbol{\theta},\lambda)}{E (\boldsymbol{x_d},\boldsymbol{\theta},\lambda)}  \bigg]  \mathbb{E}_{\boldsymbol{x_d}\sim p}\bigg[ l\Big(\big(\boldsymbol{x_d},\boldsymbol{x_c}^{\ast}(\boldsymbol{x_d},\boldsymbol{\theta})\big),\boldsymbol{y}\Big)  \bigg]\\&
    +\mathbb{E}_{\boldsymbol{x_d}\sim p}\Bigg[ \frac{\partial l\Big(\big(\boldsymbol{x_d},\boldsymbol{x_c}^{\ast}(\boldsymbol{x_d},\boldsymbol{\theta})\big),\boldsymbol{y}\Big)}{\partial \boldsymbol{x_c}^{\ast}} \frac{\partial  \boldsymbol{x_c}^{\ast}(\boldsymbol{x_d},\boldsymbol{\theta})}{\partial \boldsymbol{\theta}} \Bigg]
        \end{aligned}
    \end{equation}
    where $p=p(\boldsymbol{x}_d|\boldsymbol{\theta},\lambda)$ and $E^{'} (\boldsymbol{x_d},\boldsymbol{\theta},\lambda)=\frac{\partial E (\boldsymbol{x_d},\boldsymbol{\theta},\lambda)}{\partial \boldsymbol{\theta}}$.
\end{theorem}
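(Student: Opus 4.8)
The plan is to first prove differentiability of the integrand $r_{\lambda}(\boldsymbol{\theta},\boldsymbol{y})$ with $\boldsymbol{\theta}$ held as the variable, derive its gradient, and then lift the result to $\mathcal{R}_{\lambda}(\boldsymbol{\phi})$ by interchanging differentiation with the outer expectation over $(\boldsymbol{z},\boldsymbol{y})$ and applying the chain rule through $\boldsymbol{\theta}=M_{\boldsymbol{\phi}}(\boldsymbol{z})$. The key structural observation is that $\mathcal{X}_d\subset\{0,1\}^{n_1}$ is a \emph{finite} set, so $r_{\lambda}(\boldsymbol{\theta},\boldsymbol{y})=\sum_{\boldsymbol{x}_d\in\mathcal{X}_d} p(\boldsymbol{x}_d\mid\boldsymbol{\theta},\lambda)\, l((\boldsymbol{x}_d,\boldsymbol{x}_c^{\ast}(\boldsymbol{x}_d,\boldsymbol{\theta})),\boldsymbol{y})$ is a finite sum; hence it suffices to show each summand is differentiable in $\boldsymbol{\theta}$ and differentiate term by term. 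The needed smoothness of the ingredients follows from the earlier results: by \cref{diff_con} the continuous optimizer $\boldsymbol{x}_c^{\ast}(\boldsymbol{x}_d,\boldsymbol{\theta})$ is differentiable in $\boldsymbol{\theta}$, so the inner optimal value $f((\boldsymbol{x}_d,\boldsymbol{x}_c^{\ast}(\boldsymbol{x}_d,\boldsymbol{\theta})),\boldsymbol{\theta})$ of the SOC program is differentiable, the energy $E(\boldsymbol{x}_d,\boldsymbol{\theta},\lambda)=\text{exp}(-f/\lambda)$ is a strictly positive smooth composition, and the partition function $Z(\boldsymbol{\theta})=\sum_{\boldsymbol{x}_d'\in\mathcal{X}_d}E(\boldsymbol{x}_d',\boldsymbol{\theta},\lambda)$ is a positive finite sum of such terms. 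Therefore $p(\boldsymbol{x}_d\mid\boldsymbol{\theta},\lambda)=E/Z$ is a ratio of differentiable functions with nonvanishing denominator, hence differentiable.

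Applying the product rule to each summand splits the derivative into a group differentiating the probability weight $p$ and a group differentiating the loss $l$ through $\boldsymbol{x}_c^{\ast}$. For the first group I would apply the quotient rule to $p=E/Z$ and recast the outcome in score-function form,
\begin{equation}\nonumber
\frac{\partial p(\boldsymbol{x}_d\mid\boldsymbol{\theta},\lambda)}{\partial\boldsymbol{\theta}}=p(\boldsymbol{x}_d\mid\boldsymbol{\theta},\lambda)\left[\frac{E'(\boldsymbol{x}_d,\boldsymbol{\theta},\lambda)}{E(\boldsymbol{x}_d,\boldsymbol{\theta},\lambda)}-\mathbb{E}_{\boldsymbol{x}_d'\sim p}\frac{E'(\boldsymbol{x}_d',\boldsymbol{\theta},\lambda)}{E(\boldsymbol{x}_d',\boldsymbol{\theta},\lambda)}\right],
\end{equation}
which is the standard log-derivative identity $\partial_{\boldsymbol{\theta}}p=p\,\partial_{\boldsymbol{\theta}}\log p$ specialized to this Gibbs distribution. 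Multiplying by $l((\boldsymbol{x}_d,\boldsymbol{x}_c^{\ast}),\boldsymbol{y})$ and summing over $\boldsymbol{x}_d\in\mathcal{X}_d$ collapses exactly into the first two expectation terms of \cref{expect_grad}: the $p\cdot\frac{E'}{E}\cdot l$ piece becomes $\mathbb{E}_{\boldsymbol{x}_d\sim p}[\frac{E'}{E}l]$, while the subtracted factor $\mathbb{E}_{\boldsymbol{x}_d'\sim p}[\frac{E'}{E}]$ is constant in the summation index and pulls out, leaving it times $\sum_{\boldsymbol{x}_d}p\,l=\mathbb{E}_{\boldsymbol{x}_d\sim p}[l]$.

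For the second group, the chain rule gives $\partial_{\boldsymbol{\theta}}l((\boldsymbol{x}_d,\boldsymbol{x}_c^{\ast}(\boldsymbol{x}_d,\boldsymbol{\theta})),\boldsymbol{y})=\frac{\partial l}{\partial\boldsymbol{x}_c^{\ast}}\frac{\partial\boldsymbol{x}_c^{\ast}}{\partial\boldsymbol{\theta}}$, valid because $\boldsymbol{x}_c^{\ast}$ is differentiable by \cref{diff_con} and $l$ is differentiable in its continuous argument (as the appearance of $\frac{\partial l}{\partial\boldsymbol{x}_c^{\ast}}$ in the statement requires); summing $p$ against this over $\boldsymbol{x}_d$ yields the third expectation term, and collecting all three groups reproduces \cref{expect_grad}. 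To obtain the outer formula $\frac{\partial\mathcal{R}_{\lambda}}{\partial\boldsymbol{\phi}}=\mathbb{E}_{(\boldsymbol{z},\boldsymbol{y})\sim\mathbb{P}}[\frac{\partial r_{\lambda}}{\partial\boldsymbol{\theta}}\frac{\partial\boldsymbol{\theta}}{\partial\boldsymbol{\phi}}]$, I would interchange $\partial_{\boldsymbol{\phi}}$ with the expectation by dominated convergence: \cref{assum_bound} bounds $l$, and \cref{assum_opt_con} together with \cref{diff_con} give continuity and differentiability of $\boldsymbol{x}_c^{\ast}$, and hence of $E$, $p$, and $f$, in $\boldsymbol{\phi}$, which on a neighborhood of $\boldsymbol{\phi}$ furnish a local integrable dominating bound on $\partial_{\boldsymbol{\phi}}r_{\lambda}$; the factor $\frac{\partial\boldsymbol{\theta}}{\partial\boldsymbol{\phi}}$ then enters through $\boldsymbol{\theta}=M_{\boldsymbol{\phi}}(\boldsymbol{z})$.

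The main obstacle is not the algebra but the two smoothness facts underpinning it: the differentiability of the inner optimal value $f((\boldsymbol{x}_d,\boldsymbol{x}_c^{\ast}(\boldsymbol{x}_d,\boldsymbol{\theta})),\boldsymbol{\theta})$, which is an optimal-value function of a parametric SOC program and which I would establish via an envelope argument layered on top of \cref{diff_con}, and the rigorous interchange of derivative and expectation in the outer step, which needs the integrable local dominating function assembled from \cref{assum_bound}, \cref{assum_opt_con}, and \cref{diff_con}. Once these are in hand, the score-function rearrangement, though it is the crux that produces the clean two-term covariance-like expression, reduces to routine manipulation of the finite sum over $\mathcal{X}_d$.
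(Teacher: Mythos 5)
Your proposal is correct and follows essentially the same route as the paper's proof: term-by-term differentiation of the finite sum over $\mathcal{X}_d$, the quotient-rule/score-function identity for $\partial p(\boldsymbol{x}_d\mid\boldsymbol{\theta},\lambda)/\partial\boldsymbol{\theta}$ yielding the two covariance-like terms, the chain rule through $\boldsymbol{x}_c^{\ast}$ (differentiable by \cref{diff_con}) for the third term, and an interchange of derivative and expectation justified by the boundedness of $r_{\lambda}$ from \cref{assum_bound} (the paper cites Theorem 9.56 of \citet{shapiro2021lectures} for this step, which plays the same role as your dominated-convergence argument).
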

\begin{proof}
    See \cref{proof_diff_mixed}
\end{proof}
Note that in the last term of \cref{expect_grad}, $\frac{\partial  \boldsymbol{x_c}^{\ast}(\boldsymbol{x_d},\boldsymbol{\theta})}{\partial \boldsymbol{\theta}}$ is the gradient of continuous decision with respect to the parameter, which is exactly what we develop in \cref{diff_con}.

The gradient (\ref{expect_grad}) can be estimated by sampling from distribution $p(\boldsymbol{x}_d|\boldsymbol{\theta},\lambda)$. However, direct sampling from $p(\boldsymbol{x}_d|\boldsymbol{\theta},\lambda)$ necessitates the computation of the normalizer in \cref{energy_distri}, which requires the calculation of the energy function of all the feasible integer solutions.

To avoid this problem, we adopt the self-normalized importance sampling method (See \cref{app_imp_samp}). To construct a proposal distribution $q$ that resembles $p(\boldsymbol{x}_d|\boldsymbol{\theta},\lambda)$, we first derive $T$ integer solutions $\mathscr{C}=\{ \boldsymbol{x_d}^1,\cdots,\boldsymbol{x_d}^T \}$ with the largest energy functions by solving $f$ for $T$ times (See \cref{inte_cut} for readers not familiar with this oracle) and then construct the proposal distribution $q$ as follows.\allowdisplaybreaks
\begin{align}\label{impr_asmp}
    q(\boldsymbol{x_d})= 
        \frac{E(\boldsymbol{x_d},\boldsymbol{\theta},\lambda)}{\sum_{t\in [T]}E(\boldsymbol{x_d}^t,\boldsymbol{\theta},\lambda)+M(|\mathcal{X}_d|-T) },\forall \boldsymbol{x_d}\in \mathscr{C}\nonumber\\ 
        q(\boldsymbol{x_d})=\frac{M}{\sum_{t\in [T]}E(\boldsymbol{x_d}^t,\boldsymbol{\theta},\lambda)+M(|\mathcal{X}_d|-T) },\forall \boldsymbol{x_d}\notin \mathscr{C}
\end{align}
where $M$ is a constant that can be understood as the energy of other integer solutions.

Therefore, each term in \cref{expect_grad} can be estimated unbiasedly by sampling from $q$.
\section{Application: Contextual Distributionally Robust Decision-Making}
\label{C_DRO}

As an application of the differentiable DRO layers in \cref{DDROL}, we develop a decision-focused learning pipeline for contextual distributionally robust decision-making tasks \cite{bertsimas2022bootstrap,wang2021distributionally,yang2022decision}. 

In this paper, we mainly focus on and develop a decision-focused learning method for DRO with SOC ambiguity set, but in fact, the proposed DRO Layer technique can also be extended to the Wasserstein ambiguity set and we discuss this issue in \cref{wass_extend}.

\subsection{Decision-Focused Learning Pipeline}
\label{df_pipe}

The proposed pipeline is illustrated in \cref{DF_learning}. A learning model $M_{\boldsymbol{\phi}}$ is first leveraged to learn the ambiguity set parameter $\boldsymbol{\theta}$ from covariate $\boldsymbol{z}$. However, the output parameter $\boldsymbol{\theta}$ provided by the learning model can lead to an empty ambiguity set, \emph{i.e.}, $\mathscr{U}(\boldsymbol{\theta})=\varnothing
$, and this problem typically happens when the learning model is a neural network (NN). 

To fix this problem, we add a projection layer after the learning layer. The projection layer takes $\boldsymbol{\theta}$ as input and outputs $\boldsymbol{\theta}^{\text{proj}}$ such that $\mathscr{U}(\boldsymbol{\theta}^{\text{proj}})$ is always non-empty. To achieve this, we construct the projection layer as follows.
\begin{align}
    \boldsymbol{\theta}^{\text{proj}}:=&\argmin_{\boldsymbol{\theta}^{\text{proj}}} \left\| \boldsymbol{\theta}^{\text{proj}}-\boldsymbol{\theta}   \right\| \text{ s.t. }  \mathbb{Q}_{\boldsymbol{z}}\in \mathscr{U}(\boldsymbol{\theta}^{\text{proj}})\label{proj_cons}
\end{align}
In the constraint of (\ref{proj_cons}), we explicitly require that a distribution $\mathbb{Q}_{\boldsymbol{z}}$ lies in the parameterized SOC ambiguity set $\mathscr{U}(\boldsymbol{\theta}^{\text{proj}})$, which ensures the non-emptyness of $\mathscr{U}(\boldsymbol{\theta}^{\text{proj}})$. This distribution $\mathbb{Q}_{\boldsymbol{z}}$ should be understood as an estimation of the conditional distribution of uncertainty $\boldsymbol{y}$ given covariate $\boldsymbol{z}$.

To construct such a conditional distribution estimation $\mathbb{Q}_{\boldsymbol{z}}$, we take the idea in \citet{bertsimas2020predictive}, which constructed the conditional distribution from data $(\boldsymbol{z}_n,\boldsymbol{y}_n),n\in [N]$ in a weighted sample average way as follows.
\begin{equation}\label{W_SAA}
    \mathbb{Q}_{\boldsymbol{z}}=\sum_{n=1}^N \omega_{n}(\boldsymbol{z})\delta_{\boldsymbol{y}_n}, \omega_{n}(\boldsymbol{z})\geq 0, \sum_{n=1}^N \omega_{n}(\boldsymbol{z})=1
\end{equation}
where 
$\delta_{[\cdot]}$ is the Dirac delta function.

In (\ref{W_SAA}), the weight $\omega_{n}(\boldsymbol{z})$ can be intuitively understood as a measurement of closeness between $\boldsymbol{z}$ and data $\boldsymbol{z}_n$. Some research papers provide such weight functions to choose from \cite{bertsimas2020predictive,kallus2023stochastic}, for example, the \emph{k}-nearest-neighbors weight function.

With the formulation (\ref{W_SAA}) of $\mathbb{Q}_{\boldsymbol{z}}$, the constraint of (\ref{proj_cons}) is equivalent to
\begin{equation}\label{proj_cons_refor}
    \sum_{n=1}^N \omega_{n}(\boldsymbol{z}) g_i(\boldsymbol{y}_n,\boldsymbol{\alpha}_i)\leq \sigma_i, \ \forall i \in [I]
\end{equation}

Further, if $g_i,i\in [I]$ are selected as in \cref{exam_SOC_amb}, then (\ref{proj_cons_refor}) can be reformulated into finitely many second-order cone constraints (See \cref{ana_pro} for this result). 
Therefore, the projection layer is a convex optimization layer.

After projection, $\boldsymbol{\theta}^{\text{proj}}$ is fed into the DRO layer to construct the surrogate value function $\mathcal{R}_{\lambda}(\boldsymbol{\phi})$, which is usually estimated by data of a certain batch size. Then, the back-propagation and parameter update processes are conducted.

\subsection{Prediction-Focused Pre-training}
\label{pf_pre}
If the learning model is very complicated, for example, a deep neural network, it can be hard to train it directly via the decision-focused learning pipeline. Therefore, to facilitate convergence, we first pre-train the learning model in a prediction-focused fashion.

In \cref{def_soc}, function $g_i(\boldsymbol{y},\boldsymbol{\alpha}_i)$ captures distributional features of the uncertainty $\boldsymbol{y}$. Therefore, lower $\mathbb{E}_{\boldsymbol{y}\sim \mathbb{Q}}[g_i(\boldsymbol{y},\boldsymbol{\alpha}_i)]$ can be deemed as better characterization of these distributional features, \emph{i.e.}, better prediction.

By the above observation, we define the loss function of prediction-focused pre-training as
\begin{equation}\nonumber
    \text{Loss}=\sum_{i=1}^I\Big\|   \mathbb{E}_{ \mathbb{Q}_{\boldsymbol{z}}}[g_i(\boldsymbol{y},\boldsymbol{\alpha}_i)]     \Big\|+\Big\|   \mathbb{E}_{ \mathbb{Q}_{\boldsymbol{z}}}[g_i(\boldsymbol{y},\boldsymbol{\alpha}_i)]-\sigma_i     \Big\|
\end{equation}
where $\mathbb{Q}_{\boldsymbol{z}}$ is defined in \cref{W_SAA}.

\section{Experiments}
\begin{figure*}[tp]
\begin{center}
\centerline{\includegraphics[width=0.95\textwidth]{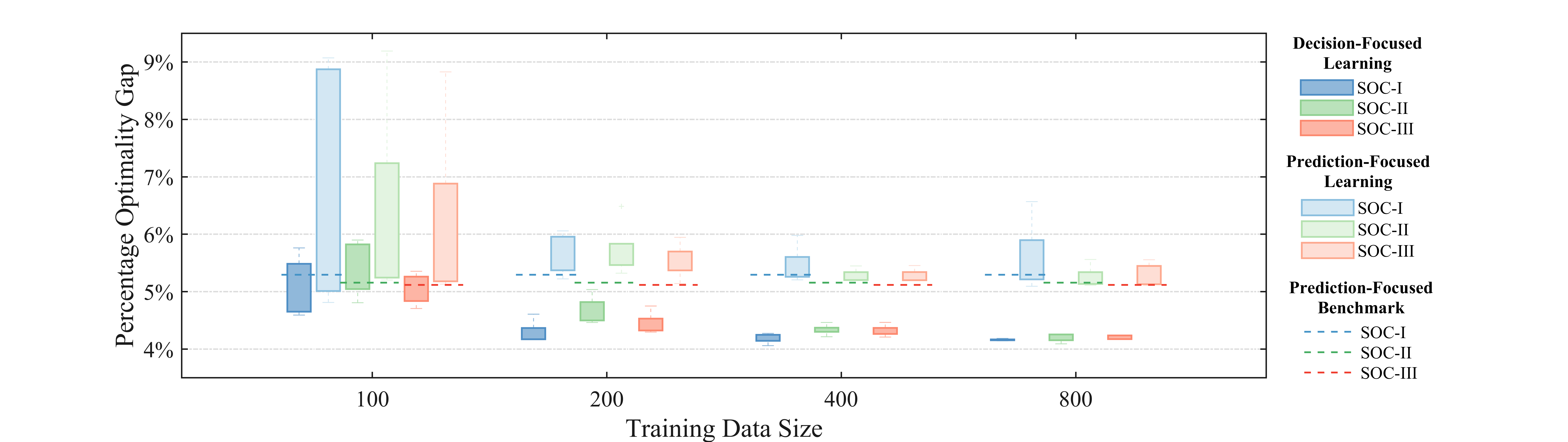}}
\vspace{-5pt}
\caption{Experimental results on the multi-item newsvendor problem.}
\label{news_res}
\end{center}
\vspace{-20pt}
\end{figure*}
To validate the effectiveness of the proposed differentiable DRO layers, we conduct experiments on a toy example and the portfolio management problem\footnote{Source code of all the experiments is available at \href{https://github.com/DOCU-Lab/Differentiable_DRO_Layers}{https://github.com/DOCU-Lab/Differentiable\_DRO\_Layers}.}.

In both experiments, problems are formulated in a contextual DRO setting and thus we can apply the decision-focused learning pipeline developed in \cref{C_DRO}. The detailed experiment setup is presented in \cref{exp_set}.

\subsection{Toy Example: Multi-item Newsvendor Problem}

We consider a multi-item newsvendor problem (\ref{news_pro}) where two options are provided for buying each item, \emph{i.e.}, retail and wholesale. The wholesale price $\boldsymbol{a}^d_i$ is lower than the retail price $\boldsymbol{a}^c_i$ but it can only be sold at a fixed amount $\boldsymbol{v}_i$.\allowdisplaybreaks
    \begin{align}
        \min_{\boldsymbol{x}^c,\boldsymbol{x}^d}&\bigg\{
            \sum_{i=1}^n a_i^c x_i^c + a_i^d v_i x_i^d +b_i(y_i - x_i^c-v_ix_i^d)^{+}\nonumber\\&\hspace{40pt}+d_i(x_i^c+v_ix_i^d-y_i)^+ 
         \bigg\}\nonumber\\
        \text{s.t. }&\hspace{25pt} x_i^c\geq 0, \ x_i^d\in \{ 0,1  \}, \ \forall i \in [n]\label{news_pro}
    \end{align}
where $y_i$ is the demand of item $i$, the continuous variable $\boldsymbol{x}^c$ denotes amount of item bought from retail, integer variable $\boldsymbol{x}^d$ denotes the wholesale option, $\boldsymbol{b}$ is the unit price of additional ordering, and $\boldsymbol{d}$ is the unit holding cost. 

To characterize the uncertainty in demand $y_i$, we consider the following three types of parameterized SOC ambiguity sets, where first- and second-order moment features are characterized.\allowdisplaybreaks
\begin{align}
\allowdisplaybreaks
    \tag{SOC-I}\label{SOC_I}
    \mathscr{U}_{\text{I}}&(\boldsymbol{\mu}_\text{I},\sigma_\text{I})=\left\{ \mathbb{P} \left|\ \begin{gathered}
        \mathbb{P}(\varXi)=1\\
            \mathbb{E}_{\mathbb{P}} \left[\left\| \boldsymbol{y}-\boldsymbol{\mu}_\text{I}  \right\|_1\right]\leq \sigma_\text{I}
        \end{gathered}
         \right.       \right\}\\
    \tag{SOC-II}\label{SOC_II}
    \mathscr{U}_{\text{II}}&(\boldsymbol{\mu}_\text{II},\sigma_\text{II})=\left\{ \mathbb{P} \left|\ \begin{gathered}
        \mathbb{P}(\varXi)=1\\
            \mathbb{E}_{\mathbb{P}} \left[\left\| \boldsymbol{y}-\boldsymbol{\mu}_\text{II}  \right\|_2^2 \right]\leq \sigma_\text{II}
        \end{gathered}
         \right.       \right\}\\
    \tag{SOC-III}\label{SOC_III}
    \mathscr{U}_{\text{III}}&(\boldsymbol{\mu}_\text{I},\sigma_\text{I},\boldsymbol{\mu}_\text{II},\sigma_\text{II})
    =
    \mathscr{U}_{\text{I}}(\boldsymbol{\mu}_\text{I},\sigma_\text{I})\bigcap \mathscr{U}_{\text{II}}(\boldsymbol{\mu}_\text{II},\sigma_\text{II})
\end{align}
The ambiguity set parameters are learned by NN in all the experiments.
We compare the proposed decision-focused learning method (\cref{df_pipe}) with the prediction-focused learning method, which is described in \cref{pf_pre}.

To fully verify the superior performance of the decision-focused learning method, we further compare it with the prediction-focused benchmark. The parameters of the ambiguity set in the prediction-focused benchmark are selected with the full knowledge of the conditional distribution $p(\boldsymbol{y}|\boldsymbol{z})$. For example, in SOC-I, the parameters $\boldsymbol{\mu}_{\text{I}}$ and $\sigma_{\text{I}}$ are directly set to the mean and first-order absolute central moment of the conditional distribution $p(\boldsymbol{y}|\boldsymbol{z})$. Therefore, the performance of the prediction-focused benchmark is the `optimal' performance that the prediction-focused learning method can expect.

In experiments, we take $n=4$ and conduct 10 runs for cases of different training data sizes. We use the percentage optimality gap, whose definition can be found in \cref{exp_set}, to measure the performance of each method. 
 The experiment results are presented in \cref{news_res}.

By \cref{news_res}, the performance of prediction-focused learning will converge to the prediction-focused benchmark as the training data size grows. On the contrary, by directly minimizing the decision loss, the proposed decision-focused learning method demonstrates better performance. Quantitatively, the proposed decision-focused learning method demonstrates average improvements of 21.4\%, 18.7\%, and 18.1\% compared with the prediction-focused benchmark in the three ambiguity sets, respectively.
\subsection{Portfolio Management Problem}

As mentioned in the literature review, \citet{costa2023distributionally} also developed an end-to-end DRO method for portfolio management problems, but their method only applies to pure continuous decision cases. Therefore, in this part, we make a direct comparison with the method proposed in \citet{costa2023distributionally} on the portfolio management problem with pure continuous decisions, and then we further conduct experiments with mixed-integer decisions.

\subsubsection{Portfolio Management Problem with Pure Continuous Decisions}
The portfolio management problem (\ref{port_prob}) with pure continuous decisions aims to select the optimal portfolio $\boldsymbol{x}\in \mathbb{R}^n$ that minimizes the cost and the uncertainty comes from the return $\boldsymbol{y}$. \allowdisplaybreaks
    \begin{gather}
    \begin{gathered}
        \min_{\boldsymbol{x}}\hspace{10pt} -\boldsymbol{y}^T\boldsymbol{x} \\
        \text{s.t.}\hspace{10pt} \boldsymbol{1}^T\boldsymbol{x}=1,\ \boldsymbol{x}\geq 0
    \end{gathered}
        \label{port_prob}
    \end{gather}

    To characterize the uncertainty in return $\boldsymbol{y}$, we consider
the following parameterized SOC ambiguity set.
\begin{equation}\label{amb_mixed_prot}
    \mathscr{U}(\boldsymbol{\mu},\boldsymbol{\sigma})=\left\{ \mathbb{P} \left|\ \begin{gathered}
        \mathbb{P}(\varXi)=1\\
            \mathbb{E}_{\mathbb{P}} \left[\left\| \boldsymbol{y}_i-\boldsymbol{\mu}_i  \right\|_2^2 \right]\leq \boldsymbol{\sigma}_i,\forall  i \in [n]
        \end{gathered}
         \right.       \right\}
\end{equation}

In experiments, we take the dimension $n$ of assets to $40$. For learning the ambiguity set parameter, we use the same 2-layer neural network in both our method and the method proposed in \citet{costa2023distributionally} for a fair comparison.

We use 2,500 data for training, 500 data for validation, and 1,000 data for testing. The average percentage profit of the proposed decision-focused learning method, prediction-focused learning method, and the method proposed in \citet{costa2023distributionally} are 0.289, 0.082, and 0.268, respectively. We also plot the wealth evolution in \cref{costa_compare}. This superiority in performance is ascribed to the fact that a restrictive assumption on the structure of uncertainty distribution is presumed in the method of \citet{costa2023distributionally} while our method applies to general distributions.

\begin{figure}[btph!]
\begin{center}
\centerline{\includegraphics[width=0.45\textwidth]{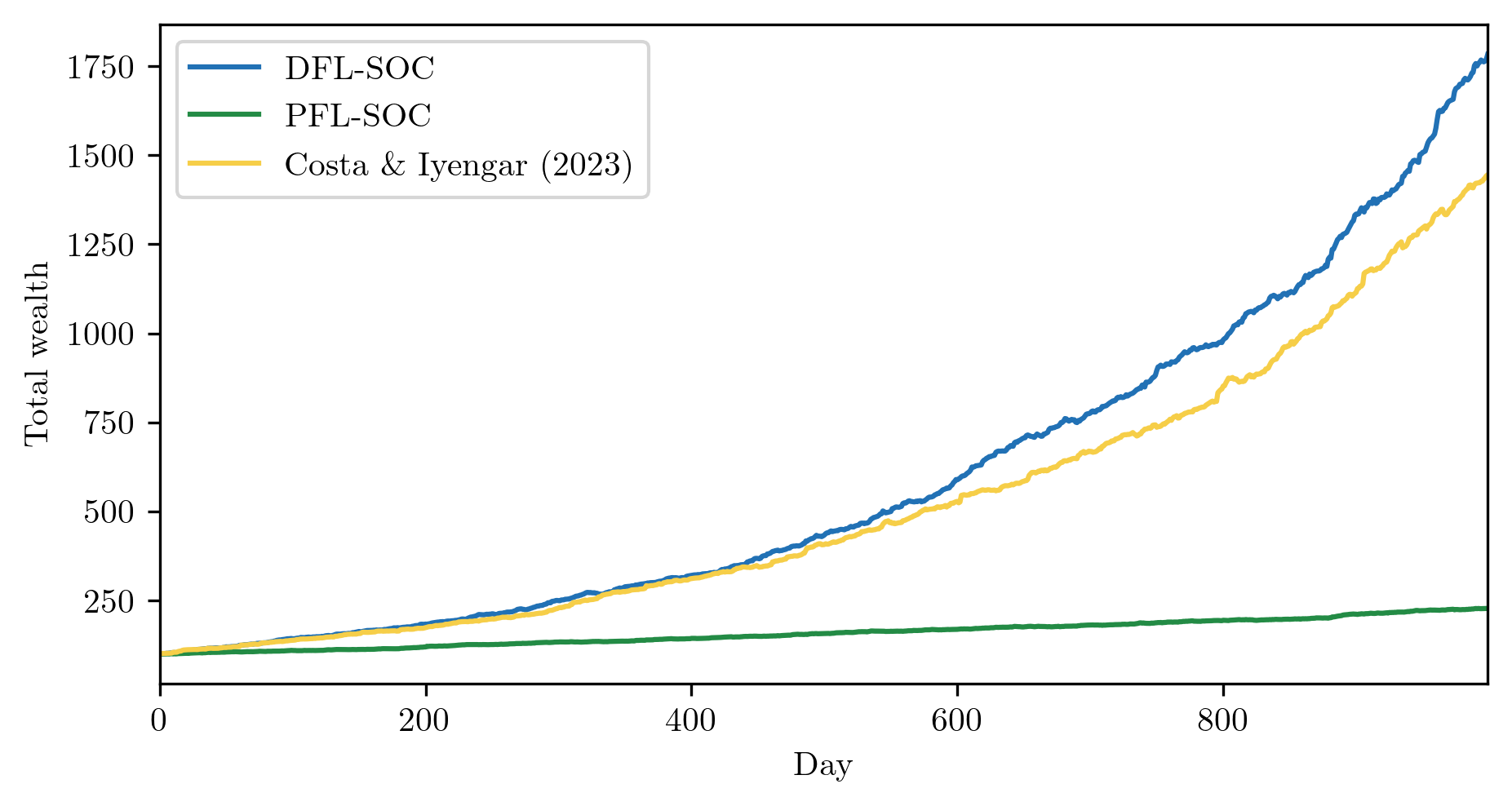}}
\vspace{-10pt}
\caption{Wealth evolution on a 40-dimensional continuous portfolio management problem using decision-focused learning, prediction-focused learning, and method proposed in \citet{costa2023distributionally}. (In the legend, `DFL' stands for decision-focused learning, and `PFL' stands for prediction-focused learning.)}
\label{costa_compare}
\end{center}
\vspace{-25pt}
\end{figure}

\subsubsection{Portfolio Management Problem with Mixed-Integer Decisions}
We further conduct experiments on the mixed-integer portfolio management problem (\ref{port_prob_mix}), where some of the assets are only allowed to
be bought with either a fixed amount or 0. Thus, the decisions on these assets are binary variables. 
\begin{gather}
\begin{gathered}
    \min_{\boldsymbol{x}_c,\boldsymbol{x}_d}\hspace{10pt} -\boldsymbol{y}_c^T\boldsymbol{x}_c - \boldsymbol{y}_d^T\text{diag}(\boldsymbol{v})\boldsymbol{x}_d\\
        \text{s.t.}\hspace{10pt} \boldsymbol{1}^T\boldsymbol{x}_c + \boldsymbol{v}^T\boldsymbol{x}_d=1,\ \boldsymbol{x}_c\geq 0,\boldsymbol{x}_d \in \{ 0,1\}
\end{gathered}
    \label{port_prob_mix}
    \end{gather}
where $\boldsymbol{y}_c,\boldsymbol{y}_d$ are returns corresponding to assets with continuous decisions $\boldsymbol{x}_c$ and binary decisions $\boldsymbol{x}_d$, and $\boldsymbol{v}$ denotes the fixed amount of assets allowed to be bought.  
    
In experiments, the SOC ambiguity set is set to (\ref{amb_mixed_prot}) if not explicitly specified and the number of binary decisions is set to $\frac{1}{5}$ of the number of the problem dimension.

\textbf{Performance with Different Dimensions: }We compare the performance of the proposed decision-focused learning method with prediction-focused learning method on mixed-integer portfolio management problems of different dimensions, and the results are presented in \cref{tab_dimen}, where problem dimension refers to the number of assets. Generally, the performance gap between decision-focused and predict-focused learning methods scales as the problem dimension becomes larger. The advantage of decision-focused method becomes more apparent as the problem dimension increases.

\begin{table}[htpb!]
\vspace{-6pt}
\centering
\caption{Average percentage profit of decision-focused learning and prediction-focused learning on mixed-integer portfolio management problems with different dimensions.}
\begin{tabular}{cccc} 
\toprule
\multirow{2}*{Problem dimension} & \multicolumn{2}{c}{Method}  &\multirow{2}*{Improvement}  \\
\cmidrule(lr){2-3}  
   &    DFL & PFL &  \\
  \midrule
  20 & 0.1561  & 0.0583 & 167\%\\
  40 & 0.1763 & 0.0634 & 178\%\\
  60 & 0.2145 & 0.0479 & 347\%\\
  \bottomrule
  \end{tabular}
  \label{tab_dimen}
\end{table}

\textbf{Performance with Different SOC Ambiguity Sets: }The the SOC ambiguity set (\ref{DEF_SOC_AMB}) is determined by the constraints $g_i$, which significantly affect the performance. Therefore, we conduct experiments on 60-dimensional mixed-integer portfolio management problems with three SOC ambiguity sets with different numbers of constraints. The detailed information of these three ambiguity sets is presented in \cref{exp_set}.

The performance of decision-focused and prediction-focused learning methods using these three types of ambiguity sets are presented in \cref{tab_amb}. With more complex ambiguity sets, both decision-focused and prediction-focused methods have better performance, and the improvement for decision-focused learning generally grows.

\begin{table}[htpb!]
\vspace{-10pt}
\centering
\caption{Average percentage profit of decision-focused learning and prediction-focused learning on 60-dimensional mixed-integer portfolio management problem with different SOC ambiguity sets.}
\begin{tabular}{cccc} 
\toprule
\multirow{2}*{SOC constraint No.} & \multicolumn{2}{c}{Method}  &\multirow{2}*{Improvement}  \\
\cmidrule(lr){2-3}  
   &    DFL & PFL &  \\
  \midrule
  15 & 0.0762  & 0.0357 & 113\% \\
  30 & 0.1349 & 0.0491 & 174\% \\
  60 & 0.2145 & 0.0479 & 347\% \\
  \bottomrule
  \end{tabular}
  \label{tab_amb}
\end{table}

\section{Discussion of Limitations}
The major limitation of applying the proposed DRO-Layer method to large-scale problems lies in the heavy computational burden.
Specifically, 
the decision-focused learning pipeline we developed in Section 4 can be decomposed into four processes: 1. learning layer; 2. projection layer; 3. solving MICP; 4. DRO Layer. Processes 2 and 4 are built on Cvxpylayers \cite{agrawal2019differentiable}, and Process 3 is built on commercial solver Gurobi. To test the computational efficiency and scalability, we conduct experiments and present the computational time of each of these four processes in \cref{diss_limm}.

It is noteworthy that both Cvxpylayers and Gurobi are built on CPU rather than on GPU, thereby leading to computational inefficiency and relatively weak scalability inevitably. If GPU training is allowable in these software, we believe the computation will not be a burden.

\section{Conclusion}
We developed the first generic differentiable DRO layers, where a novel dual-view methodology was proposed to handle the mixed-integer decision via distinct principles. Based on the proposed differentiable DRO layers, we further developed a decision-focused learning pipeline for contextual DRO problems and verified its effectiveness in experiments.

\newpage
\section*{Acknowledgements}
This work was supported in part by the National Natural Science Foundation of China under Grant 62103264, and in part by the National Natural Science Foundation of China (Basic Science Center Program) under Grant 61988101.

\section*{Impact Statement}
This paper presents work whose goal is to advance the integration of DRO and machine learning. None of the potential impacts of our work we feel must be specifically highlighted here.

\bibliography{main}

\begin{thebibliography}{42}
\providecommand{\natexlab}[1]{#1}
\providecommand{\url}[1]{\texttt{#1}}
\expandafter\ifx\csname urlstyle\endcsname\relax
  \providecommand{\doi}[1]{doi: #1}\else
  \providecommand{\doi}{doi: \begingroup \urlstyle{rm}\Url}\fi

\bibitem[Agrawal et~al.(2019{\natexlab{a}})Agrawal, Amos, Barratt, Boyd,
  Diamond, and Kolter]{agrawal2019differentiable}
Agrawal, A., Amos, B., Barratt, S., Boyd, S., Diamond, S., and Kolter, J.~Z.
\newblock Differentiable convex optimization layers.
\newblock \emph{Advances in neural information processing systems}, 32,
  2019{\natexlab{a}}.

\bibitem[Agrawal et~al.(2019{\natexlab{b}})Agrawal, Barratt, Boyd, Busseti, and
  Moursi]{Agrawal2019DifferentiatingTA}
Agrawal, A., Barratt, S.~T., Boyd, S.~P., Busseti, E., and Moursi, W.~M.
\newblock Differentiating through a cone program.
\newblock \emph{Journal of Applied and Numerical Optimization},
  2019{\natexlab{b}}.

\bibitem[Amos \& Kolter(2017)Amos and Kolter]{amos2017optnet}
Amos, B. and Kolter, J.~Z.
\newblock Optnet: Differentiable optimization as a layer in neural networks.
\newblock In \emph{International Conference on Machine Learning}, pp.\
  136--145. PMLR, 2017.

\bibitem[Ben-Tal \& Nemirovski(2001)Ben-Tal and Nemirovski]{ben2001lectures}
Ben-Tal, A. and Nemirovski, A.
\newblock \emph{Lectures on modern convex optimization: analysis, algorithms,
  and engineering applications}.
\newblock SIAM, 2001.

\bibitem[Berthet et~al.(2020)Berthet, Blondel, Teboul, Cuturi, Vert, and
  Bach]{berthet2020learning}
Berthet, Q., Blondel, M., Teboul, O., Cuturi, M., Vert, J.-P., and Bach, F.
\newblock Learning with differentiable pertubed optimizers.
\newblock \emph{Advances in neural information processing systems},
  33:\penalty0 9508--9519, 2020.

\bibitem[Bertsimas \& Kallus(2020)Bertsimas and
  Kallus]{bertsimas2020predictive}
Bertsimas, D. and Kallus, N.
\newblock From predictive to prescriptive analytics.
\newblock \emph{Management Science}, 66\penalty0 (3):\penalty0 1025--1044,
  2020.

\bibitem[Bertsimas \& Van~Parys(2022)Bertsimas and
  Van~Parys]{bertsimas2022bootstrap}
Bertsimas, D. and Van~Parys, B.
\newblock Bootstrap robust prescriptive analytics.
\newblock \emph{Mathematical Programming}, 195\penalty0 (1):\penalty0 39--78,
  2022.

\bibitem[Bertsimas et~al.(2019)Bertsimas, Sim, and
  Zhang]{bertsimas2019adaptive}
Bertsimas, D., Sim, M., and Zhang, M.
\newblock Adaptive distributionally robust optimization.
\newblock \emph{Management Science}, 65\penalty0 (2):\penalty0 604--618, 2019.

\bibitem[Bonnans \& Shapiro(2013)Bonnans and Shapiro]{bonnans2013perturbation}
Bonnans, J.~F. and Shapiro, A.
\newblock \emph{Perturbation analysis of optimization problems}.
\newblock Springer Science \& Business Media, 2013.

\bibitem[Busseti et~al.(2019)Busseti, Moursi, and Boyd]{busseti2019solution}
Busseti, E., Moursi, W.~M., and Boyd, S.
\newblock Solution refinement at regular points of conic problems.
\newblock \emph{Computational Optimization and Applications}, 74:\penalty0
  627--643, 2019.

\bibitem[Chenreddy \& Delage(2024)Chenreddy and
  Delage]{Chenreddy2024EndtoendCR}
Chenreddy, A.~R. and Delage, E.
\newblock End-to-end conditional robust optimization.
\newblock \emph{ArXiv}, abs/2403.04670, 2024.

\bibitem[Chenreddy et~al.(2022)Chenreddy, Bandi, and Delage]{chenreddy2022data}
Chenreddy, A.~R., Bandi, N., and Delage, E.
\newblock Data-driven conditional robust optimization.
\newblock \emph{Advances in Neural Information Processing Systems},
  35:\penalty0 9525--9537, 2022.

\bibitem[Costa \& Iyengar(2023)Costa and Iyengar]{costa2023distributionally}
Costa, G. and Iyengar, G.~N.
\newblock Distributionally robust end-to-end portfolio construction.
\newblock \emph{Quantitative Finance}, 23\penalty0 (10):\penalty0 1465--1482,
  2023.

\bibitem[Dalle et~al.(2022)Dalle, Baty, Bouvier, and
  Parmentier]{dalle2022learning}
Dalle, G., Baty, L., Bouvier, L., and Parmentier, A.
\newblock Learning with combinatorial optimization layers: a probabilistic
  approach.
\newblock \emph{arXiv preprint arXiv:2207.13513}, 2022.

\bibitem[Delage \& Ye(2010)Delage and Ye]{delage2010distributionally}
Delage, E. and Ye, Y.
\newblock Distributionally robust optimization under moment uncertainty with
  application to data-driven problems.
\newblock \emph{Operations research}, 58\penalty0 (3):\penalty0 595--612, 2010.

\bibitem[Donti et~al.(2017)Donti, Amos, and Kolter]{donti2017task}
Donti, P., Amos, B., and Kolter, J.~Z.
\newblock Task-based end-to-end model learning in stochastic optimization.
\newblock \emph{Advances in neural information processing systems}, 30, 2017.

\bibitem[Elmachtoub \& Grigas(2022)Elmachtoub and Grigas]{elmachtoub2022smart}
Elmachtoub, A.~N. and Grigas, P.
\newblock Smart “predict, then optimize”.
\newblock \emph{Management Science}, 68\penalty0 (1):\penalty0 9--26, 2022.

\bibitem[Ferber et~al.(2020)Ferber, Wilder, Dilkina, and
  Tambe]{ferber2020mipaal}
Ferber, A., Wilder, B., Dilkina, B., and Tambe, M.
\newblock Mipaal: Mixed integer program as a layer.
\newblock In \emph{Proceedings of the AAAI Conference on Artificial
  Intelligence}, volume~34, pp.\  1504--1511, 2020.

\bibitem[Kallus \& Mao(2023)Kallus and Mao]{kallus2023stochastic}
Kallus, N. and Mao, X.
\newblock Stochastic optimization forests.
\newblock \emph{Management Science}, 69\penalty0 (4):\penalty0 1975--1994,
  2023.

\bibitem[Kong et~al.(2022)Kong, Cui, Zhuang, Feng, Prakash, and
  Zhang]{kong2022end}
Kong, L., Cui, J., Zhuang, Y., Feng, R., Prakash, B.~A., and Zhang, C.
\newblock End-to-end stochastic optimization with energy-based model.
\newblock \emph{Advances in Neural Information Processing Systems},
  35:\penalty0 11341--11354, 2022.

\bibitem[Mandi \& Guns(2020)Mandi and Guns]{mandi2020interior}
Mandi, J. and Guns, T.
\newblock Interior point solving for lp-based prediction+ optimisation.
\newblock \emph{Advances in Neural Information Processing Systems},
  33:\penalty0 7272--7282, 2020.

\bibitem[Mandi et~al.(2022)Mandi, Bucarey, Tchomba, and
  Guns]{mandi2022decision}
Mandi, J., Bucarey, V., Tchomba, M. M.~K., and Guns, T.
\newblock Decision-focused learning: through the lens of learning to rank.
\newblock In \emph{International Conference on Machine Learning}, pp.\
  14935--14947. PMLR, 2022.

\bibitem[Mandi et~al.(2023)Mandi, Kotary, Berden, Mulamba, Bucarey, Guns, and
  Fioretto]{mandi2023decision}
Mandi, J., Kotary, J., Berden, S., Mulamba, M., Bucarey, V., Guns, T., and
  Fioretto, F.
\newblock Decision-focused learning: Foundations, state of the art, benchmark
  and future opportunities.
\newblock \emph{arXiv preprint arXiv:2307.13565}, 2023.

\bibitem[Mohajerin~Esfahani \& Kuhn(2018)Mohajerin~Esfahani and
  Kuhn]{mohajerin2018data}
Mohajerin~Esfahani, P. and Kuhn, D.
\newblock Data-driven distributionally robust optimization using the
  wasserstein metric: performance guarantees and tractable reformulations.
\newblock \emph{Mathematical Programming}, 171\penalty0 (1-2):\penalty0
  115--166, 2018.

\bibitem[Mulamba et~al.(2020)Mulamba, Mandi, Diligenti, Lombardi, Bucarey, and
  Guns]{mulamba2020contrastive}
Mulamba, M., Mandi, J., Diligenti, M., Lombardi, M., Bucarey, V., and Guns, T.
\newblock Contrastive losses and solution caching for predict-and-optimize.
\newblock \emph{arXiv preprint arXiv:2011.05354}, 2020.

\bibitem[Niepert et~al.(2021)Niepert, Minervini, and
  Franceschi]{niepert2021implicit}
Niepert, M., Minervini, P., and Franceschi, L.
\newblock Implicit mle: backpropagating through discrete exponential family
  distributions.
\newblock \emph{Advances in Neural Information Processing Systems},
  34:\penalty0 14567--14579, 2021.

\bibitem[Ning \& You(2019)Ning and You]{ning2019optimization}
Ning, C. and You, F.
\newblock Optimization under uncertainty in the era of big data and deep
  learning: When machine learning meets mathematical programming.
\newblock \emph{Computers \& Chemical Engineering}, 125:\penalty0 434--448,
  2019.

\bibitem[Pogančić et~al.(2020)Pogančić, Paulus, Musil, Martius, and
  Rolinek]{Pogan}
Pogančić, M.~V., Paulus, A., Musil, V., Martius, G., and Rolinek, M.
\newblock Differentiation of blackbox combinatorial solvers.
\newblock In \emph{International Conference on Learning Representations}, 2020.

\bibitem[Rahimian \& Mehrotra(2022)Rahimian and
  Mehrotra]{rahimian2022frameworks}
Rahimian, H. and Mehrotra, S.
\newblock Frameworks and results in distributionally robust optimization.
\newblock \emph{Open Journal of Mathematical Optimization}, 3:\penalty0 1--85,
  2022.

\bibitem[Sadana et~al.(2023)Sadana, Chenreddy, Delage, Forel, Frejinger, and
  Vidal]{sadana2023survey}
Sadana, U., Chenreddy, A., Delage, E., Forel, A., Frejinger, E., and Vidal, T.
\newblock A survey of contextual optimization methods for decision making under
  uncertainty.
\newblock \emph{arXiv preprint arXiv:2306.10374}, 2023.

\bibitem[Shah et~al.(2022)Shah, Wang, Wilder, Perrault, and
  Tambe]{shah2022decision}
Shah, S., Wang, K., Wilder, B., Perrault, A., and Tambe, M.
\newblock Decision-focused learning without decision-making: Learning locally
  optimized decision losses.
\newblock \emph{Advances in Neural Information Processing Systems},
  35:\penalty0 1320--1332, 2022.

\bibitem[Shapiro et~al.(2021)Shapiro, Dentcheva, and
  Ruszczynski]{shapiro2021lectures}
Shapiro, A., Dentcheva, D., and Ruszczynski, A.
\newblock \emph{Lectures on stochastic programming: modeling and theory}.
\newblock SIAM, 2021.

\bibitem[Sun et~al.(2023)Sun, Liu, and Li]{sun2023predict}
Sun, C., Liu, L., and Li, X.
\newblock Predict-then-calibrate: A new perspective of robust contextual lp.
\newblock \emph{arXiv preprint arXiv:2305.15686}, 2023.

\bibitem[Wang et~al.(2023)Wang, Becker, Van~Parys, and
  Stellato]{wang2023learning}
Wang, I., Becker, C., Van~Parys, B., and Stellato, B.
\newblock Learning for robust optimization.
\newblock \emph{arXiv preprint arXiv:2305.19225}, 2023.

\bibitem[Wang et~al.(2021)Wang, Chen, and Wang]{wang2021distributionally}
Wang, T., Chen, N., and Wang, C.
\newblock Distributionally robust prescriptive analytics with wasserstein
  distance.
\newblock \emph{arXiv preprint arXiv:2106.05724}, 2021.

\bibitem[Wiesemann et~al.(2014)Wiesemann, Kuhn, and
  Sim]{wiesemann2014distributionally}
Wiesemann, W., Kuhn, D., and Sim, M.
\newblock Distributionally robust convex optimization.
\newblock \emph{Operations research}, 62\penalty0 (6):\penalty0 1358--1376,
  2014.

\bibitem[Wilder et~al.(2019)Wilder, Dilkina, and Tambe]{wilder2019melding}
Wilder, B., Dilkina, B., and Tambe, M.
\newblock Melding the data-decisions pipeline: Decision-focused learning for
  combinatorial optimization.
\newblock In \emph{Proceedings of the AAAI Conference on Artificial
  Intelligence}, volume~33, pp.\  1658--1665, 2019.

\bibitem[Yang et~al.(2022)Yang, Zhang, Chen, Gao, and Hu]{yang2022decision}
Yang, J., Zhang, L., Chen, N., Gao, R., and Hu, M.
\newblock Decision-making with side information: A causal transport robust
  approach.
\newblock \emph{Optimization Online}, 2022.

\bibitem[Yang et~al.(2023)Yang, Yin, Wang, Ignatius, Cheng, and
  Dhamotharan]{yang2023distributionally}
Yang, Y., Yin, Y., Wang, D., Ignatius, J., Cheng, T., and Dhamotharan, L.
\newblock Distributionally robust multi-period location-allocation with
  multiple resources and capacity levels in humanitarian logistics.
\newblock \emph{European Journal of Operational Research}, 305\penalty0
  (3):\penalty0 1042--1062, 2023.

\bibitem[Zhang et~al.(2022)Zhang, Li, and Yu]{zhang2022emergency}
Zhang, J., Li, Y., and Yu, G.
\newblock Emergency relief network design under ambiguous demands: A
  distributionally robust optimization approach.
\newblock \emph{Expert Systems with Applications}, 208:\penalty0 118139, 2022.

\bibitem[Zharmagambetov et~al.(2023)Zharmagambetov, Amos, Ferber, Huang,
  Dilkina, and Tian]{zharmagambetov2023landscape}
Zharmagambetov, A., Amos, B., Ferber, A., Huang, T., Dilkina, B., and Tian, Y.
\newblock Landscape surrogate: Learning decision losses for mathematical
  optimization under partial information.
\newblock \emph{arXiv preprint arXiv:2307.08964}, 2023.

\bibitem[Zhou et~al.(2019)Zhou, Shahidehpour, Wei, Li, Sun, and
  Chen]{zhou2019distributionally}
Zhou, Y., Shahidehpour, M., Wei, Z., Li, Z., Sun, G., and Chen, S.
\newblock Distributionally robust unit commitment in coordinated electricity
  and district heating networks.
\newblock \emph{IEEE Transactions on Power Systems}, 35\penalty0 (3):\penalty0
  2155--2166, 2019.

\end{thebibliography}
\bibliographystyle{icml2024}

\newpage
\appendix
\onecolumn

\section{Supplementary Background Material}

In this section, we give supplementary information on DRO with SOC ambiguity set in \ref{SOC_R_S}, \ref{exam_SOC_amb}, and \ref{Slater}. Most of these materials are selected from \citet{ben2001lectures} and \citet{bertsimas2019adaptive}.

In \cref{app_diss_unique}, \ref{app_imp_samp}, \ref{inte_cut}, and \ref{ana_pro}, detailed information on our method are provided.

\subsection{Second-Order Cone Representable Set}
\label{SOC_R_S}
[\citet{ben2001lectures}, p.86] A set $\mathscr{W}\subset \mathbb{R}^n$ is a second-order cone representable set if there exists $J$ second-order cone inequalities of the form
\begin{equation}
    \boldsymbol{A}_j\left[ \begin{gathered}
        \boldsymbol{y}\\ \boldsymbol{v}
    \end{gathered} \right]-\boldsymbol{b}_j \geq_{L^{m_j}} \boldsymbol{0},\forall j \in [J]
\end{equation}
such that 
\begin{equation}
    \boldsymbol{y}\in \mathscr{W}\iff \exists \boldsymbol{v}:  \boldsymbol{A}_j\left[ \begin{gathered}
        \boldsymbol{y}\\ \boldsymbol{v}
    \end{gathered} \right]-\boldsymbol{b}_j \geq_{L^{m_j}} \boldsymbol{0},\forall j \in [J]
\end{equation}
where $L^{m}$ is the $m$ dimensional second-order cone:
\begin{equation}
    L^{m}=\Big\{ \boldsymbol{y}=(y_1,\cdots,y_m)^T\in \mathbb{R}^m \Big| y_m\geq \sqrt{y_1^2+\cdots + y_{m-1}^2}  \Big\}
\end{equation}

\subsection{Examples of Parameterized SOC Ambiguity Set}
\label{exam_SOC_amb}
The parameterized SOC ambiguity set is determined by its constraints $\mathbb{E}_{\mathbb{P}}[g_i(\boldsymbol{y},\boldsymbol{\alpha}_i)]\leq \sigma_i$, and here we present some choices of function $g$ and the parameter $\boldsymbol{\alpha}$.
\begin{enumerate}
    \item $g=\boldsymbol{\mu}^T\boldsymbol{y}$ with vector $\boldsymbol{\mu}$ as the parameter.
    \item $g=|\boldsymbol{\mu}^T\boldsymbol{y}-h|$ with vector $\boldsymbol{\mu}$ and scalar $h$ as the parameter.
    \item $g=(|\boldsymbol{\mu}^T\boldsymbol{y}-h|)^p$ for some rational $p\geq 1$ with vector $\boldsymbol{\mu}$ and scalar $h$ as the parameter.
    \item $g=((\boldsymbol{\mu}^T\boldsymbol{y}-h)^+)^2=(\max \{  0, \boldsymbol{\mu}^T\boldsymbol{y}-h\})^2$ vector $\boldsymbol{\mu}$ and scalar $h$ as the parameter.
    \item $g=\left\| \boldsymbol{A}\boldsymbol{y}-\boldsymbol{\mu}  \right\|_p$ for some rational $p\geq 1$ norm $\| \cdot\|_p$ with matrix $\boldsymbol{A}$ and vector $\boldsymbol{\mu}$ as the parameter.
\end{enumerate}

More examples can be constructed by taking the maximum, \emph{i.e.}, $g=\max_{l\in [L]}g_i$, and non-negtive sum, \emph{i.e.}, $g=\sum_{l=1}^L\lambda_ig_i$ for $\lambda_i\geq 0$. See \citet{ben2001lectures} for more operators that preserve the second-order cone representable property of $g$. 

\subsection{Slater’s Condition for Parameterized SOC Ambiguity Set}
\label{Slater}
According to Proposition 1 in \citet{bertsimas2019adaptive}, the parameterized SOC ambiguity set $\mathscr{U}(\boldsymbol{\theta})$ (defined in \cref{def_soc}) can be equivalently reformulated as follows.
\begin{equation}
    \mathscr{U}(\boldsymbol{\theta})=\left\{ \#_{\boldsymbol{y}}\mathbb{Q} \left|\  \begin{gathered}
        (\boldsymbol{y},\boldsymbol{u})\sim \mathbb{Q} \\
        \mathbb{E}_{\mathbb{Q}} [u_i]\leq \sigma_i, \forall i \in [I]\\
        \mathbb{Q}(\mathscr{V})=1
    \end{gathered}  \right.         \right\}
\end{equation}
where each dimension $u_i$ of variable $\boldsymbol{u}$ corresponds to the constraint $g_i$ in  $\mathscr{U}(\boldsymbol{\theta})$, distribution $\mathbb{Q}$ is on the space of $(\boldsymbol{y},\boldsymbol{u})$, $\#_{\boldsymbol{y}}\mathbb{Q}$ is the marginal distribution of $\mathbb{Q}$ on dimension $\boldsymbol{y}$, and the support $\mathscr{V}$ is defined as
\begin{equation}\label{app_sla}
    \mathscr{V}=\{ (\boldsymbol{y},\boldsymbol{u}) | \boldsymbol{y}\in \varXi, g_i(\boldsymbol{y},\boldsymbol{\alpha}_i)\leq u_i,\forall i \in [I]  \}=\{(\boldsymbol{y},\boldsymbol{u}) | \boldsymbol{y}\in \varXi \}\bigcap_{i=1}^I \{ (\boldsymbol{y},\boldsymbol{u}) | g_i(\boldsymbol{y},\boldsymbol{\alpha}_i)\leq u_i \}
\end{equation}

By \cref{def_soc}, $\{(\boldsymbol{y},\boldsymbol{u}) | \boldsymbol{y}\in \varXi \}$ is second-order cone representable set, and each $\{ (\boldsymbol{y},\boldsymbol{u}) | g_i(\boldsymbol{y},\boldsymbol{\alpha}_i)\leq u_i \}$ are differentiable parameterized second-order representable sets. Therefore, $\mathscr{V}$ is also a differentiable parameterized second-order representable set, so $\mathscr{V}$ can be represented by finitely many second-order cone constraints.
\begin{equation}\label{appp_slaa}
    \mathscr{V}=\left\{(\boldsymbol{y},\boldsymbol{u}) \left|\exists \boldsymbol{v}:  \boldsymbol{A}_j(\boldsymbol{\theta})\left[ \begin{gathered}
        \boldsymbol{y}\\ \boldsymbol{u}\\\boldsymbol{v}
    \end{gathered} \right]-\boldsymbol{b}_j(\boldsymbol{\theta}) \geq_{L^{m_j}} \boldsymbol{0},\forall j \in [J]  \right. \right\}
\end{equation}

The Slater’s condition requires that there exist a $(\boldsymbol{y}^{\ast},\boldsymbol{u}^{\ast},\boldsymbol{v}^{\ast})$ such that $u_i^{\ast}< \sigma_i,\ \forall i \in [I]$ and
\begin{equation}
    \boldsymbol{A}_j(\boldsymbol{\theta})\left[ \begin{gathered}
        \boldsymbol{y}^{\ast}\\ \boldsymbol{u}^{\ast}\\\boldsymbol{v}^{\ast}
    \end{gathered} \right]-\boldsymbol{b}_j(\boldsymbol{\theta}) >_{L^{m_j}} \boldsymbol{0},\forall j \in [J] 
\end{equation}

\subsection{Discussion of \cref{assum_unique_sol}}
\label{app_diss_unique}
Assumption (i) ensures the uniqueness of the continuous decision, which is common in differentiable optimization layer research \cite{agrawal2019differentiable}. 

For the discrete decision $\boldsymbol{x}_d$, since its feasible region $\mathcal{X}_d$ is finite, we can not require the uniqueness of $\boldsymbol{x}_d$ for all $\boldsymbol{\theta}\in \Theta$, where $\boldsymbol{\theta}=M_{\boldsymbol{\phi}}(\boldsymbol{z})$. For example, if we consider the combinatorial optimization problem, \emph{i.e.},
\begin{equation}\label{app_comb}
    \boldsymbol{x}_d=\argmin_{\boldsymbol{x}_d\in \{ 0,1 \}^n } f(\boldsymbol{x}_d,\boldsymbol{\theta}):= \boldsymbol{\theta}^T \boldsymbol{x}_d, \text{ s.t. } \boldsymbol{1}^T\boldsymbol{x}_d=1
\end{equation}
The solution to this problem is not unique when the prediction $\boldsymbol{\theta}$ has multiple minimum elements.

However, we note that for problem (\ref{app_comb}), the $\boldsymbol{\theta}$ leading to multiple solutions has measure zero in its space $\Theta$, and this property holds for a lot of problems. Therefore, we require in assumption (ii) the uniqueness to hold almost surely with respect to the marginal distribution $\mathbb{P}_{\boldsymbol{z}}$ of $\boldsymbol{z}$.

Specifically, in combinatorial optimization problem (\ref{app_comb}), if the learning model is a linear one, \emph{i.e.}, $\boldsymbol{\theta}=\boldsymbol{A} \boldsymbol{z}+\boldsymbol{b}$ with $\boldsymbol{\phi}=(\boldsymbol{A},\boldsymbol{b})$, where the rows of matrix $\boldsymbol{A}$ are all different, and covariate $\boldsymbol{z}$ has marginal distribution absolutely continuous with respect to the Lebesgue measure, then (ii) is satisfied.

We further note that (ii) is also implicitly assumed in \citet{Pogan}. 

\subsection{Self-Normalized  Importance Sampling}
\label{app_imp_samp}
Suppose we want to compute the expectation of a random variable $J(\boldsymbol{x}_d)$, \emph{i.e.},
\begin{equation}
    \mathbb{E}_{\boldsymbol{x_d}\sim p(\boldsymbol{x_d}|\boldsymbol{\theta},\lambda)} [J(\boldsymbol{x_d})]
\end{equation}
where $p(\boldsymbol{x_d}|\boldsymbol{\theta},\lambda)$ is defined as in \cref{energy_distri}.

The importance sampling aims to compute $\mathbb{E}_{\boldsymbol{x_d}\sim p(\boldsymbol{x_d}|\boldsymbol{\theta},\lambda)} [J(\boldsymbol{x_d})]$ by leveraging a proposal distribution $q$ which is absolutely continuous with respect to $p(\boldsymbol{x_d}|\boldsymbol{\theta},\lambda)$. With proposal distribution $q$, the above expectation can be computed by
\begin{equation}\nonumber
    \mathbb{E}_{\boldsymbol{x_d}\sim p(\boldsymbol{x_d}|\boldsymbol{\theta},\lambda)} [J(\boldsymbol{x_d})]=\frac{\mathbb{E}_{\boldsymbol{x_d}\sim q(\boldsymbol{x_d})}\bigg[ \frac{E(\boldsymbol{x_d},\boldsymbol{\theta},\lambda)}{q(\boldsymbol{x_d})}J(\boldsymbol{x_d}) \bigg]}{\mathbb{E}_{\boldsymbol{x_d}\sim q(\boldsymbol{x_d})}\bigg[ \frac{E(\boldsymbol{x_d},\boldsymbol{\theta},\lambda)}{q(\boldsymbol{x_d})} \bigg] }
\end{equation}

Therefore, by sampling from the known distribution $q$, the expectation can be estimated unbiasedly.

\subsection{Oracles to Get $T$ Optimal Integer Solutions}
\label{inte_cut}
Suppose that we want to solve $T$ optimal integer solutions of the following mixed-integer linear cone program. 
\begin{equation}\label{pro_pro_exa}
\begin{gathered}
    \min_{\boldsymbol{x}_d,\boldsymbol{x_c},\boldsymbol{v}} \boldsymbol{a}^T\boldsymbol{x}_d+\boldsymbol{b}^T\boldsymbol{x}_c+\boldsymbol{c}^T\boldsymbol{v}\\
    \text{s.t. }  \boldsymbol{x}_d\in \{ 0,1 \}^{n_d}, \ \boldsymbol{x}_c \in \mathbb{R}^{n_c}
    \\\boldsymbol{A}_i^T\boldsymbol{x}_d+\boldsymbol{B}_i^T\boldsymbol{x}_c+\boldsymbol{C}_i^T\boldsymbol{v}\leq_{\mathscr{K}_i} \boldsymbol{0},\forall i \in [I]
\end{gathered}    
\end{equation}

Such mixed-integer linear cone program can be solved by commercial solvers like \emph{gurobi}. We solve this program and get the optimal integer solution $\boldsymbol{x}_d^1$, and then we add the following constraint to problem (\ref{pro_pro_exa}).
\begin{equation}\label{cut_cons}
    \Big(\boldsymbol{1}-2\boldsymbol{x}_d^1 \Big)^T\boldsymbol{x}_d+\boldsymbol{1}^T\boldsymbol{x}_d^1\geq 1
\end{equation}

Constraint (\ref{cut_cons}) only cuts out $\boldsymbol{x}_d^1$ from the feasible region. Therefore, by solving (\ref{pro_pro_exa}) with extra constraint (\ref{cut_cons}) we will get the second optimal solution $\boldsymbol{x}_d^2$. Then we cut out $\boldsymbol{x}_d^2$ to get $\boldsymbol{x}_d^3$.

Repeat the above process for $T$ times and we will get $T$ optimal integer solutions.

\subsection{Analysis of the Projection Layer}
\label{ana_pro}
If $g_i,i\in [I]$ in the parameterized SOC ambiguity set are selected as what we give in \cref{exam_SOC_amb}, then the epigraph of $g_i(\boldsymbol{y},\boldsymbol{\alpha}_i)$ with respect to $\boldsymbol{\alpha}_i$, \emph{i.e.},
\begin{equation}
    \{ (\boldsymbol{\alpha}_i,u)|\boldsymbol{\alpha}_i\in \mathcal{A}_i,u\geq    g_i(\boldsymbol{y},\boldsymbol{\alpha}_i)       \}
\end{equation}
is also a second-order cone representable set.

By \citet{ben2001lectures} p.91, this representability is preserved by a non-negative sum, so
\begin{equation}
    \sum_{n=1}^N \omega_{n}(\boldsymbol{z}) g_i(\boldsymbol{y}_n,\boldsymbol{\alpha}_i)\leq \sigma_i
\end{equation}
can be expressed by finitely many second-order cone constraints.

\section{Proofs}
\subsection{Proof of \cref{thm_refor}}
\label{proof_refor}
When the cost function $c(\boldsymbol{x},\boldsymbol{y})$ is of the form (ii) in \cref{ass_cost_func}, then \cref{thm_refor} coincides with Theorem 1 in \citet{bertsimas2019adaptive}.

For the case the cost function $c(\boldsymbol{x},\boldsymbol{y})$ is of the form (i) in \cref{ass_cost_func}, the proof is quite similar. Since in this case $c(\boldsymbol{x},\boldsymbol{y})$ is linear in $\boldsymbol{y}$, according to Theorem 1 in \citet{bertsimas2019adaptive}, the worst-case expectation $f(\boldsymbol{x},\mathscr{U}(\boldsymbol{\theta}))=\max_{\mathbb{P}\in \mathscr{U}(\boldsymbol{\theta})} \mathbb{E}_{\boldsymbol{y}\sim \mathbb{P}}[c(\boldsymbol{x},\boldsymbol{y})]$ is equivalent to
\begin{gather}
    f(\boldsymbol{x},\mathscr{U}(\boldsymbol{\theta}))=\min_{r,\boldsymbol{\beta}} {r+\boldsymbol{\beta}^T \boldsymbol{\sigma}}\\
    \text{s.t. } r\geq c(\boldsymbol{x},\boldsymbol{y}) - \boldsymbol{\beta}^T \boldsymbol{u},\ \forall (\boldsymbol{y},\boldsymbol{u})\in \mathscr{V},\boldsymbol{y}\geq \boldsymbol{0}\label{prrof_1_conss}\\
    \boldsymbol{\beta}\geq \boldsymbol{0}
\end{gather}
where $\mathscr{V}$ is defined in \cref{app_sla}.

Since \cref{ass_sla} holds, we can leverage the duality theory to reformulate constraint (\ref{prrof_1_conss}) as follows.
\begin{gather}
    r\geq \max_{\boldsymbol{y},\boldsymbol{u}\in \mathscr{V},\boldsymbol{y}\geq \boldsymbol{0}} c(\boldsymbol{x},\boldsymbol{y}) - \boldsymbol{\beta}^T \boldsymbol{u}\iff r\geq \max_{\boldsymbol{y},\boldsymbol{u}\in \mathscr{V},\boldsymbol{y}\geq \boldsymbol{0}} 
 \sum_{k=1}^{K} c_k(\boldsymbol{x})y_k- \boldsymbol{\beta}^T \boldsymbol{u} 
 \\
 \iff r\geq \max_{\boldsymbol{y}\geq \boldsymbol{0}} \min_{\boldsymbol{\eta}_j\geq_{L^{m_j}} \boldsymbol{0}}\left\{  \sum_{k=1}^{K} c_k(\boldsymbol{x})y_k- \boldsymbol{\beta}^T \boldsymbol{u} +\sum_{j=1}^{J}\boldsymbol{\eta}_j^T\left( \boldsymbol{A}_j^{\boldsymbol{y}}(\boldsymbol{\theta})\boldsymbol{y}+\boldsymbol{A}_j^{\boldsymbol{u}}(\boldsymbol{\theta})\boldsymbol{u}+\boldsymbol{A}_j^{\boldsymbol{v}}(\boldsymbol{\theta})\boldsymbol{v}-\boldsymbol{b}_j(\boldsymbol{\theta})  \right) \right\}
 \\
 \iff r\geq \min_{\boldsymbol{\eta}_j\geq_{L^{m_j}} \boldsymbol{0}} \max_{\boldsymbol{y}\geq \boldsymbol{0}}\left\{  \sum_{k=1}^{K} c_k(\boldsymbol{x})y_k- \boldsymbol{\beta}^T \boldsymbol{u} +\sum_{j=1}^{J}\boldsymbol{\eta}_j^T\left( \boldsymbol{A}_j^{\boldsymbol{y}}(\boldsymbol{\theta})\boldsymbol{y}+\boldsymbol{A}_j^{\boldsymbol{u}}(\boldsymbol{\theta})\boldsymbol{u}+\boldsymbol{A}_j^{\boldsymbol{v}}(\boldsymbol{\theta})\boldsymbol{v}-\boldsymbol{b}_j(\boldsymbol{\theta})  \right) \right\}\label{proof_1_dual}
 \\
 \iff
     r\geq -\sum_{j=1}^{J}\boldsymbol{\eta}_j^T\boldsymbol{b}_j(\boldsymbol{\theta}),
      -\sum_{j=1}^{J} (\boldsymbol{A}_j^{\boldsymbol{y}}(\boldsymbol{\theta}))^T \boldsymbol{\eta}_j \geq \left[ \begin{gathered}
         c_1(\boldsymbol{x})\\
         \vdots\\
         c_K(\boldsymbol{x})
     \end{gathered} \right],  \sum_{j=1}^{J} (\boldsymbol{A}_j^{\boldsymbol{u}}(\boldsymbol{\theta}))^T \boldsymbol{\eta}_j=\boldsymbol{\beta}, \sum_{j=1}^{J} (\boldsymbol{A}_j^{\boldsymbol{v}}(\boldsymbol{\theta}))^T \boldsymbol{\eta}_j=\boldsymbol{0},\boldsymbol{\eta}_j\geq_{L^{m_j}}\boldsymbol{0}\label{proof_1_1}
\end{gather}
where $(\boldsymbol{A}_j^{\boldsymbol{y}}(\boldsymbol{\theta}),\boldsymbol{A}_j^{\boldsymbol{u}}(\boldsymbol{\theta}),\boldsymbol{A}_j^{\boldsymbol{v}}(\boldsymbol{\theta}))=\boldsymbol{A}_j(\boldsymbol{\theta})$ and $\boldsymbol{b}_j (\boldsymbol{\theta})$ are defined in \cref{appp_slaa}, and (\ref{proof_1_dual}) hold by duality theory since the Slater's condition holds by \cref{ass_sla}.

The second term in (\ref{proof_1_1}) can be reformulated as
\begin{gather}
      -\sum_{j=1}^{J} (\boldsymbol{A}_j^{\boldsymbol{y}}(\boldsymbol{\theta})\boldsymbol{e}_k)^T \boldsymbol{\eta}_j \geq c_k(\boldsymbol{x}),\ \forall k\in [K]\label{proof_1_2}
\end{gather}
where $\boldsymbol{e}_k$ is the vector where the $k$th element is 1 and other elements are 0. Since by \cref{ass_cost_func} the epigraph of $c_i(\boldsymbol{x})$ is a second-order cone representable set, thus each $- \sum_{j=1}^{J} (\boldsymbol{A}_j^{\boldsymbol{y}}(\boldsymbol{\theta})\boldsymbol{e}_k)^T \boldsymbol{\eta}_j \geq c_k(\boldsymbol{x})$ can be expressed by finitely many second-order cone constraints.

Therefore, \cref{thm_refor} holds for both the cases in \cref{ass_cost_func}.

\subsection{Proof of \cref{diff_con}}
\label{proof_diff_con}

By \cref{thm_refor}, the worst-case expectation $f(\boldsymbol{x},\mathscr{U}(\boldsymbol{\theta}))$ is a linear second-order cone program and all the constraints are linear in $\boldsymbol{x}$, so under continuous assumption of $\boldsymbol{x}$, the DRO problem
\begin{equation}
    \min_{\boldsymbol{x}\in \mathcal{X}}f(\boldsymbol{x},\mathscr{U}(\boldsymbol{\theta}))\label{proo_2_DR0}
\end{equation}
is also a linear second-order cone program.

Specifically, by \cref{proof_refor}, if the cost function $c(\boldsymbol{x},\boldsymbol{y})$ is of the form (i) in \cref{ass_cost_func}, (\ref{proo_2_DR0}) is equivalent to \allowdisplaybreaks
\begin{gather}
    \min_{\boldsymbol{x}\in \mathcal{X}}f(\boldsymbol{x},\mathscr{U}(\boldsymbol{\theta}))=\min_{\boldsymbol{x}\in \mathcal{X},r,\boldsymbol{\beta}} {r+\boldsymbol{\beta}^T \boldsymbol{\sigma}}\hspace{80pt}\label{proof_2_soc}\\
    \text{s.t. } r\geq -\sum_{j=1}^{J}\boldsymbol{\eta}_j^T\boldsymbol{b}_j(\boldsymbol{\theta})\\
     - \sum_{j=1}^{J} (\boldsymbol{A}_j^{\boldsymbol{y}}(\boldsymbol{\theta})\boldsymbol{e}_k)^T \boldsymbol{\eta}_j \geq c_k(\boldsymbol{x}),\ \forall k\in [K]\label{proo_2_2}
     \\ - \sum_{j=1}^{J} (\boldsymbol{A}_j^{\boldsymbol{u}}(\boldsymbol{\theta}))^T \boldsymbol{\eta}_j=\boldsymbol{\beta}\\ \sum_{j=1}^{J} (\boldsymbol{A}_j^{\boldsymbol{v}}(\boldsymbol{\theta}))^T \boldsymbol{\eta}_j=\boldsymbol{0},\boldsymbol{\eta}_j\geq_{L^{m_j}}\boldsymbol{0}\\
    \boldsymbol{\beta}\geq \boldsymbol{0},\boldsymbol{\eta}_j\geq_{L^{m_j}}\boldsymbol{0},\forall j\in [J]
\end{gather}
where constraint (\ref{proo_2_2}) is defined in \cref{proof_1_2} in \cref{proof_refor} and can be reformulated into finitely many second-order cone constraints that are linear in $\boldsymbol{x}$.

If the cost function $c(\boldsymbol{x},\boldsymbol{y})$ is of the form (ii) in \cref{ass_cost_func}, a similar reformulation can be derived.

The cone programming $\min_{\boldsymbol{x}\in \mathcal{X}}f(\boldsymbol{x},\mathscr{U}(\boldsymbol{\theta}))$ takes $\boldsymbol{A}_j(\boldsymbol{\theta})=(\boldsymbol{A}_j^{\boldsymbol{y}}(\boldsymbol{\theta}),\boldsymbol{A}_j^{\boldsymbol{u}}(\boldsymbol{\theta}),\boldsymbol{A}_j^{\boldsymbol{v}}(\boldsymbol{\theta}))$ and $\boldsymbol{b}_j(\boldsymbol{\theta})$ as its parameter. Then by \citet{Agrawal2019DifferentiatingTA}, the optimal value $\boldsymbol{x}^{\ast}=\argmin_{\boldsymbol{x}\in \mathcal{X}}f(\boldsymbol{x},\mathscr{U}(\boldsymbol{\theta}))$ is differentiable with respect to parameter $\boldsymbol{A}_j(\boldsymbol{\theta})$ and $\boldsymbol{b}_j(\boldsymbol{\theta})$. Further, by \cref{def_soc_set}, $\boldsymbol{A}_j(\boldsymbol{\theta})$ and $\boldsymbol{b}_j(\boldsymbol{\theta})$ are differentiable with respect to $\boldsymbol{\theta}$. 

Therefore, $\boldsymbol{x}^{\ast}=\argmin_{\boldsymbol{x}\in \mathcal{X}}f(\boldsymbol{x},\mathscr{U}(\boldsymbol{\theta}))$ is differentiable with respect to $\boldsymbol{\theta}$.

\subsection{Proof of \cref{thm_epi_converge}}
\label{proof_thm_epi_converge}
(i) It suffices to prove that for any sequence $\lambda_n\searrow 0$, $\boldsymbol{\phi}\in \Phi$, and sequence $\boldsymbol{\phi}_n\to \boldsymbol{\phi}$, the following equality holds
\begin{equation}
    \lim_{n\to \infty} \mathcal{R}_{\lambda_n}(\boldsymbol{\phi}_n)=\mathcal{R}(\boldsymbol{\phi})
\end{equation}

For ease of notation, we define 
\begin{equation}
    f^{\ast}(\boldsymbol{x}_d,M_{\boldsymbol{\phi}}(\boldsymbol{z}))=\min_{\boldsymbol{x}_c\in \mathcal{X}_c(\boldsymbol{x}_d)}f((\boldsymbol{x}_d,\boldsymbol{x}_c),M_{\boldsymbol{\phi}}(\boldsymbol{z}))=f\big((\boldsymbol{x}_d,\boldsymbol{x}_c^{\ast}(\boldsymbol{x}_d,M_{\boldsymbol{\phi}})),M_{\boldsymbol{\phi}}(\boldsymbol{z})\big)
\end{equation}

By \cref{assum_bound}, $f((\boldsymbol{x}_d,\boldsymbol{x}_c),M_{\boldsymbol{\phi}}(\boldsymbol{z}))$ is continuous in $\boldsymbol{x}_c$ and $\boldsymbol{\phi}$, and by \cref{assum_opt_con}, $\boldsymbol{x}_c^{\ast}(\boldsymbol{x}_d,M_{\boldsymbol{\phi}}(\boldsymbol{z}))$ is also continuous in $\boldsymbol{\phi}$, so $f^{\ast}(\boldsymbol{x}_d,M_{\boldsymbol{\phi}}(\boldsymbol{z}))$ is continuous in $\boldsymbol{\phi}$.

If for a pair of $(\boldsymbol{z},\boldsymbol{\phi})$ the optimal integer solution $\boldsymbol{x}_d^{\ast}(M_{\boldsymbol{\phi}}(\boldsymbol{z}))= \argmin_{\boldsymbol{x}_d\in \mathcal{X}_d} f^{\ast}(\boldsymbol{x}_d,M_{\boldsymbol{\phi}}(\boldsymbol{z}))$ is unique, then it holds that
\begin{equation}\label{unique_intepre}
    \min_{\boldsymbol{x}_d\in \mathcal{X}_d/\{ \boldsymbol{x}_d^{\ast}(M_{\boldsymbol{\phi}}(\boldsymbol{z}))\}} f^{\ast}\Big(\boldsymbol{x}_d,M_{\boldsymbol{\phi}}(\boldsymbol{z})\Big)-f^{\ast}\Big(\boldsymbol{x}_d^{\ast},M_{\boldsymbol{\phi}}(\boldsymbol{z})\Big)>0
\end{equation}

Then by the continuity of $f^{\ast}(\boldsymbol{x}_d,M_{\boldsymbol{\phi}}(\boldsymbol{z}))$ in $\boldsymbol{\phi}$, there exists a $\epsilon>0$ such that
\begin{equation}\label{unique_inte2}
\forall \overline{\boldsymbol{\phi}}\in \left\{\overline{\boldsymbol{\phi}} \left| \| \overline{\boldsymbol{\phi}}-\boldsymbol{\phi}  \|\leq \epsilon  \right. \right\},
    \min_{\boldsymbol{x}_d\in \mathcal{X}_d/\{ \boldsymbol{x}_d^{\ast}(M_{\boldsymbol{\phi}}(\boldsymbol{z}))\}} f^{\ast}\Big(\boldsymbol{x}_d,M_{\overline{\boldsymbol{\phi}}}(\boldsymbol{z})\Big)-f^{\ast}\Big(\boldsymbol{x}_d^{\ast}(M_{\boldsymbol{\phi}}(\boldsymbol{z})),M_{\overline{\boldsymbol{\phi}}}(\boldsymbol{z})\Big)\geq \epsilon
\end{equation}

By the above observation, we further define 
\begin{equation}
    Y(\boldsymbol{\phi},N)=
    \left\{ \boldsymbol{z}\left| \min_{\boldsymbol{x}_d\in \mathcal{X}_d/\{ \boldsymbol{x}_d^{\ast}(M_{\boldsymbol{\phi}}(\boldsymbol{z}))\}} f^{\ast}\Big(\boldsymbol{x}_d,M_{\overline{\boldsymbol{\phi}}}(\boldsymbol{z})\Big)-f^{\ast}\Big(\boldsymbol{x}_d^{\ast}(M_{\boldsymbol{\phi}}(\boldsymbol{z})),M_{\overline{\boldsymbol{\phi}}}(\boldsymbol{z})\Big)\geq \frac{1}{N} ,\forall \overline{\boldsymbol{\phi}}\in \left\{\overline{\boldsymbol{\phi}} \left| \| \overline{\boldsymbol{\phi}}-\boldsymbol{\phi}  \|\leq \frac{1}{N}  \right. \right\} \right.  \right\}\label{thm1eq1}
\end{equation}
It is obvious that $Y(\boldsymbol{\phi},1)\subset Y(\boldsymbol{\phi},2)\subset \cdots \subset Y(\boldsymbol{\phi},N)\subset \cdots$.

By the argument concerning (\ref{unique_inte2}), we conclude that if inequality (\ref{unique_intepre}) holds for a pair of $(\boldsymbol{z},\boldsymbol{\phi})$, then $\boldsymbol{z}\in Y(\boldsymbol{\phi},N)$ for sufficiently large $N$.

Since by \cref{assum_unique_sol}, for any $\boldsymbol{\phi}\in \Phi$, inequality (\ref{unique_intepre}) holds almost surely, thus we have
\begin{equation}
    \mathbb{P}\bigg(\bigcup_{N=1}^\infty Y(\boldsymbol{\phi},N)\bigg)=1
\end{equation}

By \cref{assum_bound}, $l$ is bounded, and we denote this bounded by $\Psi$. 

Therefore, for any $\epsilon>0$, there exist a $N_{\epsilon / \Psi}$ such that 
\begin{equation}
    \mathbb{P}(Y(\boldsymbol{\phi},N_{\epsilon / \Psi}))\geq 1-\epsilon / \Psi
\end{equation}
Therefore, when $n\geq N_{\epsilon / \Psi}$, we have\allowdisplaybreaks
\begin{gather}
    |\mathcal{R}_{\lambda_n}(\boldsymbol{\phi}_n)- \mathcal{R}(\boldsymbol{\phi})|\\
    \begin{gathered}
    =\Bigg| \mathbb{E}_{(\boldsymbol{z},\boldsymbol{y})\sim \mathbb{P}} \Bigg[  \sum_{\boldsymbol{x}_d\in \mathcal{X}_d}p(\boldsymbol{x}_d|M_{\boldsymbol{\phi}_n}(\boldsymbol{z}),\lambda_n) l((\boldsymbol{x}_d,\boldsymbol{x}_c^{\ast}(\boldsymbol{x}_d,M_{\boldsymbol{\phi}_n}(\boldsymbol{z}))),\boldsymbol{y})\\-
    l \bigg( \Big(\boldsymbol{x}_d^{\ast}(M_{\boldsymbol{\phi}}(\boldsymbol{z})),\boldsymbol{x}_c^{\ast}(\boldsymbol{x_d}^{\ast}(M_{\boldsymbol{\phi}}(\boldsymbol{z})),M_{\boldsymbol{\phi}}(\boldsymbol{z}))\Big),\boldsymbol{y}\bigg)\Bigg] \Bigg|
    \end{gathered}\\
    \begin{gathered}
        \leq \Bigg|\mathbb{E}_{(\boldsymbol{z},\boldsymbol{y})\sim \mathbb{P}} \Bigg\{\mathbbm{1}\big(\boldsymbol{z}\in Y(\boldsymbol{\phi},N_{\epsilon / \Psi})\big) \Bigg[  \sum_{\boldsymbol{x}_d\in \mathcal{X}_d}p(\boldsymbol{x}_d|M_{\boldsymbol{\phi}_n}(\boldsymbol{z}),\lambda_n) l((\boldsymbol{x}_d,\boldsymbol{x}_c^{\ast}(\boldsymbol{x}_d,M_{\boldsymbol{\phi}_n}(\boldsymbol{z}))),\boldsymbol{y})
        \\-
    l \bigg( \Big(\boldsymbol{x}_d^{\ast}(M_{\boldsymbol{\phi}}(\boldsymbol{z})),\boldsymbol{x}_c^{\ast}(\boldsymbol{x_d}^{\ast}(M_{\boldsymbol{\phi}}(\boldsymbol{z})),M_{\boldsymbol{\phi}}(\boldsymbol{z}))\Big),\boldsymbol{y}\bigg)\Bigg] \Bigg\}\Bigg|
    \\+
    \left|\mathbb{E}_{(\boldsymbol{z},\boldsymbol{y})\sim \mathbb{P}} \left[\mathbbm{1}\big(\boldsymbol{z}\notin Y(\boldsymbol{\phi},N_{\epsilon / \Psi})\big)  \sum_{\boldsymbol{x}_d\in \mathcal{X}_d}p(\boldsymbol{x}_d|M_{\boldsymbol{\phi}_n}(\boldsymbol{z}),\lambda_n) l((\boldsymbol{x}_d,\boldsymbol{x}_c^{\ast}(\boldsymbol{x}_d,M_{\boldsymbol{\phi}_n}(\boldsymbol{z}))),\boldsymbol{y})\right]\right|
    \\
    + \left|\mathbb{E}_{(\boldsymbol{z},\boldsymbol{y})\sim \mathbb{P}} \left[\mathbbm{1}\big(\boldsymbol{z}\notin Y(\boldsymbol{\phi},N_{\epsilon / \Psi})\big)  l \bigg( \Big(\boldsymbol{x}_d^{\ast}(M_{\boldsymbol{\phi}}(\boldsymbol{z})),\boldsymbol{x}_c^{\ast}(\boldsymbol{x_d}^{\ast}(M_{\boldsymbol{\phi}}(\boldsymbol{z})),M_{\boldsymbol{\phi}}(\boldsymbol{z}))\Big),\boldsymbol{y}\bigg)
    \right]\right| 
    \end{gathered}\\
    \begin{gathered}
         \leq \Bigg|\mathbb{E}_{(\boldsymbol{z},\boldsymbol{y})\sim \mathbb{P}} \Bigg\{\mathbbm{1}\big(\boldsymbol{z}\in Y(\boldsymbol{\phi},N_{\epsilon / \Psi})\big) \Bigg[  \sum_{\boldsymbol{x}_d\in \mathcal{X}_d}p(\boldsymbol{x}_d|M_{\boldsymbol{\phi}_n}(\boldsymbol{z}),\lambda_n) l((\boldsymbol{x}_d,\boldsymbol{x}_c^{\ast}(\boldsymbol{x}_d,M_{\boldsymbol{\phi}_n}(\boldsymbol{z}))),\boldsymbol{y})
        \\-
    l \bigg( \Big(\boldsymbol{x}_d^{\ast}(M_{\boldsymbol{\phi}}(\boldsymbol{z})),\boldsymbol{x}_c^{\ast}(\boldsymbol{x_d}^{\ast}(M_{\boldsymbol{\phi}}(\boldsymbol{z})),M_{\boldsymbol{\phi}}(\boldsymbol{z}))\Big),\boldsymbol{y}\bigg)\Bigg] \Bigg\}\Bigg|+ 2 \frac{\epsilon}{\Psi} \Psi
    \end{gathered}\label{thm1eq2}
\end{gather}
where $\mathbbm{1}$ is the indicator function.

In (\ref{thm1eq2}), for $\boldsymbol{x_d}^{\ast}(M_{\boldsymbol{\phi}}(\boldsymbol{z}))$,
\begin{gather}
    p\Big(\boldsymbol{x_d}^{\ast}(M_{\boldsymbol{\phi}}(\boldsymbol{z}))\Big|M_{\boldsymbol{\phi}_n}(\boldsymbol{z}),\lambda_n\Big)=\frac{\text{exp}\left( -\frac{f^{\ast}\Big(\boldsymbol{x_d}^{\ast}(M_{\boldsymbol{\phi}}(\boldsymbol{z})),M_{\boldsymbol{\phi}_n}(\boldsymbol{z})\Big)}{\lambda_n} \right) }{\sum_{\boldsymbol{x_d}^{'}\in \mathcal{X}_d}\text{exp}\left( -\frac{f^{\ast}\Big(\boldsymbol{x}_d^{'},M_{\boldsymbol{\phi}_n}(\boldsymbol{z})\Big)}{\lambda_n} \right)}\\
    =\frac{1}{1+\sum_{\boldsymbol{x_d}^{'}\in \mathcal{X}_d/\big\{ \boldsymbol{x_d}^{\ast}(M_{\boldsymbol{\phi}}(\boldsymbol{z})) \big\}}\text{exp}\left( -\frac{f^{\ast}\Big(\boldsymbol{x}_d^{'},M_{\boldsymbol{\phi}_n}(\boldsymbol{z})\Big)-f^{\ast}\Big(\boldsymbol{x_d}^{\ast}(M_{\boldsymbol{\phi}}(\boldsymbol{z})),M_{\boldsymbol{\phi}_n}(\boldsymbol{z})\Big)}{\lambda_n} \right)}
\end{gather}

Since $\boldsymbol{\phi}_n\to \boldsymbol{\phi}$, then for sufficiently large $n$, $|  \boldsymbol{\phi}_n- \boldsymbol{\phi} |\leq \frac{1}{N_{\epsilon/\Psi}}$. Therefore, for $\boldsymbol{z}\in{Y}(\boldsymbol{\phi},N_{\epsilon/\Psi})$,
\begin{equation}
    f^{\ast}\Big(\boldsymbol{x}_d^{'},M_{\boldsymbol{\phi}_n}(\boldsymbol{z})\Big)-f^{\ast}\Big(\boldsymbol{x_d}^{\ast}(M_{\boldsymbol{\phi}}(\boldsymbol{z})),M_{\boldsymbol{\phi}_n}(\boldsymbol{z})\Big)\geq \frac{1}{N_{\epsilon/\Psi}}, \forall \boldsymbol{x_d}^{'}\neq \boldsymbol{x_d}^{\ast}
\end{equation}

Therefore,
\begin{gather}
     p\Big(\boldsymbol{x_d}^{\ast}(M_{\boldsymbol{\phi}}(\boldsymbol{z}))\Big|M_{\boldsymbol{\phi}_n}(\boldsymbol{z}),\lambda_n\Big)\geq \frac{1}{1+\sum_{\boldsymbol{x_d}^{'}\in \mathcal{X}_d/\big\{ \boldsymbol{x_d}^{\ast}(M_{\boldsymbol{\phi}}(\boldsymbol{z})) \big\}}\text{exp}(-\frac{1}{ \lambda_nN_{\epsilon/\Psi}})}=\frac{1}{1+(|\mathcal{X}_d|-1)\text{exp}(-\frac{1}{ \lambda_nN_{\epsilon/\Psi}})}
\end{gather}

Since $\lambda_n \searrow 0^+$, we have 
\begin{equation}
   \lim_{n\to \infty}  p\Big(\boldsymbol{x_d}^{\ast}(M_{\boldsymbol{\phi}}(\boldsymbol{z}))\Big|M_{\boldsymbol{\phi}_n}(\boldsymbol{z}),\lambda_n\Big)=1
\end{equation}
So
\begin{equation}
    \lim_{n\to \infty} \sum_{\boldsymbol{x}_d\in \mathcal{X}_d}p(\boldsymbol{x}_d|M_{\boldsymbol{\phi}_n}(\boldsymbol{z}),\lambda_n) l((\boldsymbol{x}_d,\boldsymbol{x}_c^{\ast}(\boldsymbol{x}_d,M_{\boldsymbol{\phi}_n}(\boldsymbol{z}))),\boldsymbol{y})=
    l \bigg( \Big(\boldsymbol{x}_d^{\ast}(M_{\boldsymbol{\phi}}(\boldsymbol{z})),\boldsymbol{x}_c^{\ast}(\boldsymbol{x_d}^{\ast}(M_{\boldsymbol{\phi}}(\boldsymbol{z})),M_{\boldsymbol{\phi}}(\boldsymbol{z}))\Big),\boldsymbol{y}\bigg)
\end{equation}
Or equally
\begin{equation}
   \lim_{n\to \infty}\bigg\{  \sum_{\boldsymbol{x}_d\in \mathcal{X}_d}p(\boldsymbol{x}_d|M_{\boldsymbol{\phi}_n}(\boldsymbol{z}),\lambda_n) l((\boldsymbol{x}_d,\boldsymbol{x}_c^{\ast}(\boldsymbol{x}_d,M_{\boldsymbol{\phi}_n}(\boldsymbol{z}))),\boldsymbol{y})-
    l \bigg( \Big(\boldsymbol{x}_d^{\ast}(M_{\boldsymbol{\phi}}(\boldsymbol{z})),\boldsymbol{x}_c^{\ast}(\boldsymbol{x_d}^{\ast}(M_{\boldsymbol{\phi}}(\boldsymbol{z})),M_{\boldsymbol{\phi}}(\boldsymbol{z}))\Big),\boldsymbol{y}\bigg)\bigg\}=0
\end{equation}

Since $l$ is bounded, by the bounded convergence theorem, we have
\begin{equation}
    \begin{gathered}
    \lim_{n\to \infty}\Bigg|\mathbb{E}_{(\boldsymbol{z},\boldsymbol{y})\sim \mathbb{P}} \Bigg\{\mathbbm{1}\big(\boldsymbol{z}\in Y(\boldsymbol{\phi},N_{\epsilon / \Psi})\big) \Bigg[  \sum_{\boldsymbol{x}_d\in \mathcal{X}_d}p(\boldsymbol{x}_d|M_{\boldsymbol{\phi}_n}(\boldsymbol{z}),\lambda_n) l((\boldsymbol{x}_d,\boldsymbol{x}_c^{\ast}(\boldsymbol{x}_d,M_{\boldsymbol{\phi}_n}(\boldsymbol{z}))),\boldsymbol{y})
        \\-
    l \bigg( \Big(\boldsymbol{x}_d^{\ast}(M_{\boldsymbol{\phi}}(\boldsymbol{z})),\boldsymbol{x}_c^{\ast}(\boldsymbol{x_d}^{\ast}(M_{\boldsymbol{\phi}}(\boldsymbol{z})),M_{\boldsymbol{\phi}}(\boldsymbol{z}))\Big),\boldsymbol{y}\bigg)\Bigg] \Bigg\}\Bigg|=0
\end{gathered}
\end{equation}

Therefore, 
\begin{equation}
    \lim_{n\to \infty} |\mathcal{R}_{\lambda_n}(\boldsymbol{\phi}_n)- \mathcal{R}(\boldsymbol{\phi})|\leq 2\epsilon
\end{equation}

Since $\epsilon$ can be selected arbitrarily small, we have
\begin{equation}
    \lim_{n\to \infty} |\mathcal{R}_{\lambda_n}(\boldsymbol{\phi}_n)- \mathcal{R}(\boldsymbol{\phi})|=0
\end{equation}

Therefore, $\mathcal{R}_{\lambda_n}(\boldsymbol{\phi})$ epi-converges to $\mathcal{R}(\boldsymbol{\phi})$ as $n\to \infty$.

(ii) Since $\mathcal{R}_{\lambda_n}(\boldsymbol{\phi})$ epi-converges to $\mathcal{R}(\boldsymbol{\phi})$, (ii) can be immediately derived by applying Proposition 4.6 in \citet{bonnans2013perturbation}

\subsection{Proof of \cref{thm_diff_mixed}}
\label{proof_diff_mixed}
We first prove \cref{expect_grad}. By the chain rule,
\begin{equation}
   \frac{\partial r_{\lambda}(\boldsymbol{\theta},\boldsymbol{y})}{\partial \boldsymbol{\theta}}= \sum_{\boldsymbol{x_d}\in \mathcal{X}_d}\left\{ \frac{\partial p(\boldsymbol{x_d}|\boldsymbol{\theta},\lambda)}{\partial \boldsymbol{\theta}}l\Big(\big(\boldsymbol{x_d},\boldsymbol{x_c}^{\ast}(\boldsymbol{x_d},\boldsymbol{\theta})\big),\boldsymbol{y}\Big)+
    p(\boldsymbol{x_d}|\boldsymbol{\theta},\lambda)\frac{\partial l\Big(\big(\boldsymbol{x_d},\boldsymbol{x_c}^{\ast}(\boldsymbol{x_d},\boldsymbol{\theta})\big),\boldsymbol{y}\Big)}{\partial \boldsymbol{x_c}^{\ast}} \frac{\partial  \boldsymbol{x_c}^{\ast}(\boldsymbol{x_d},\boldsymbol{\theta})}{\partial \boldsymbol{\theta}}\right\}\label{proof4_1}
\end{equation}

By \cref{assum_mixed_int}, the feasible region $\mathcal{X}_c(\boldsymbol{x}_d)$ given $\boldsymbol{x}_d$ is a second-order cone representable set. Therefore, by \cref{diff_con}, the optimal continuous solution $\boldsymbol{x_c}^{\ast}(\boldsymbol{x_d},\boldsymbol{\theta})$ is differentiable with respect to $\boldsymbol{\theta}$, so the last gradient term $\frac{\partial  \boldsymbol{x_c}^{\ast}(\boldsymbol{x_d},\boldsymbol{\theta})}{\partial \boldsymbol{\theta}}$ in 
\cref{proof4_1} is well-defined.

Since the energy function 
\begin{equation}
    E(\boldsymbol{x}_d,\boldsymbol{\theta},\lambda)=\text{exp}\left( -\frac{f\big((\boldsymbol{x}_d,\boldsymbol{x}_c^{\ast}(\boldsymbol{x}_d,\boldsymbol{\theta})),\boldsymbol{\theta}\big)}{\lambda}  \right)
\end{equation}
and $\boldsymbol{x}_c^{\ast}(\boldsymbol{x}_d,\boldsymbol{\theta}))$ is differentiable, thus $E(\boldsymbol{x}_d,\boldsymbol{\theta},\lambda)$ is also differentiable with respect to $\boldsymbol{\theta}$.

Therefore, the first gradient term $\frac{\partial p(\boldsymbol{x_d}|\boldsymbol{\theta},\lambda)}{\partial \boldsymbol{\theta}}$ in \cref{proof4_1} is well-defined since
\begin{equation}
p(\boldsymbol{x}_d|\boldsymbol{\theta},\lambda)=\frac{E(\boldsymbol{x}_d,\boldsymbol{\theta},\lambda)}{\sum_{\boldsymbol{x}_d^{'}\in \mathcal{X}_d} E(\boldsymbol{x}_d^{'},\boldsymbol{\theta},\lambda)}
\end{equation}

Let $Z(\boldsymbol{\theta},\lambda)$ denote the normalizer $\sum_{\boldsymbol{x}_d^{'}\in \mathcal{X}_d} E(\boldsymbol{x}_d^{'},\boldsymbol{\theta},\lambda)$. By the chain rule, we have
\begin{gather}
    \frac{\partial p(\boldsymbol{x}_d|\boldsymbol{\theta},\lambda)}{\partial \boldsymbol{\theta}}
    =
    \frac{\partial \frac{E(\boldsymbol{x_d},\boldsymbol{\theta},\lambda)}{\sum_{\boldsymbol{x_d}^{'}\in \mathcal{X}_d}E(\boldsymbol{x_d}^{'},\boldsymbol{\theta},\lambda)}}{\partial \boldsymbol{\theta}}
    =
    \frac{E^{'} (\boldsymbol{x_d},\boldsymbol{\theta},\lambda)}{Z(\boldsymbol{\theta},\lambda)}-
    \frac{E (\boldsymbol{x_d},\boldsymbol{\theta},\lambda)}{Z(\boldsymbol{\theta},\lambda)} \frac{\sum_{\boldsymbol{x_d}^{'}} E^{'} (\boldsymbol{x_d}^{'},\boldsymbol{\theta},\lambda) }{Z(\boldsymbol{\theta},\lambda)}\\
    =
    p(\boldsymbol{x_d}|\boldsymbol{\theta},\lambda) \frac{E^{'} (\boldsymbol{x_d},\boldsymbol{\theta},\lambda)}{E (\boldsymbol{x_d},\boldsymbol{\theta},\lambda)}-p(\boldsymbol{x_d}|\boldsymbol{\theta},\lambda) \bigg[ \sum_{\boldsymbol{x_d}^{'}\in \mathcal{X}_d} p(\boldsymbol{x_d}^{'}|\boldsymbol{\theta},\lambda) \frac{E^{'} (\boldsymbol{x_d}^{'},\boldsymbol{\theta},\lambda)}{E (\boldsymbol{x_d}^{'},\boldsymbol{\theta},\lambda)} \bigg]\\
    =p(\boldsymbol{x_d}|\boldsymbol{\theta},\lambda)\Bigg\{ \frac{E^{'} (\boldsymbol{x_d},\boldsymbol{\theta},\lambda)}{E (\boldsymbol{x_d},\boldsymbol{\theta},\lambda)}-\mathbb{E}_{\boldsymbol{x_d}^{'}\sim p(\boldsymbol{x_d}^{'}|\boldsymbol{\theta},\lambda)}\bigg[  \frac{E^{'} (\boldsymbol{x_d}^{'},\boldsymbol{\theta},\lambda)}{E (\boldsymbol{x_d}^{'},\boldsymbol{\theta},\lambda)}  \bigg]\Bigg\}\label{proof4_2}
\end{gather}

By combining \cref{proof4_1} and \cref{proof4_2}, we have
\begin{gather}
\begin{aligned}
    \frac{\partial r_{\lambda}(\boldsymbol{\theta},\boldsymbol{y})}{\partial \boldsymbol{\theta}}&=\sum_{\boldsymbol{x_d}\in \mathcal{X}_d} p(\boldsymbol{x_d}|\boldsymbol{\theta},\lambda)\frac{E^{'} (\boldsymbol{x_d}^{'},\boldsymbol{\theta},\lambda)}{E (\boldsymbol{x_d}^{'},\boldsymbol{\theta},\lambda)}l\Big(\big(\boldsymbol{x_d},\boldsymbol{x_c}^{\ast}(\boldsymbol{x_d},\boldsymbol{\theta})\big),\boldsymbol{y}\Big) 
    \\&-\mathbb{E}_{\boldsymbol{x_d}^{'}\sim p(\boldsymbol{x_d}^{'}|\boldsymbol{\theta},\lambda)}\bigg[  \frac{E^{'} (\boldsymbol{x_d}^{'},\boldsymbol{\theta},\lambda)}{E (\boldsymbol{x_d}^{'},\boldsymbol{\theta},\lambda)}\bigg] \Bigg( \sum_{\boldsymbol{x_d}\in \mathcal{X}_d} p(\boldsymbol{x_d}|\boldsymbol{\theta},\lambda)l\Big(\big(\boldsymbol{x_d},\boldsymbol{x_c}^{\ast}(\boldsymbol{x_d},\boldsymbol{\theta})\big),\boldsymbol{y}\Big)  \Bigg)\\
    &+\sum_{\boldsymbol{x_d}\in \mathcal{X}_d}
    p(\boldsymbol{x_d}|\boldsymbol{\theta},\lambda)\frac{\partial l\Big(\big(\boldsymbol{x_d},\boldsymbol{x_c}^{\ast}(\boldsymbol{x_d},\boldsymbol{\theta})\big),\boldsymbol{y}\Big)}{\partial \boldsymbol{x_c}^{\ast}} \frac{\partial  \boldsymbol{x_c}^{\ast}(\boldsymbol{x_d},\boldsymbol{\theta})}{\partial \boldsymbol{\theta}}\end{aligned}\\
    \begin{aligned}
            &=
            \mathbb{E}_{\boldsymbol{x_d}\sim p(\boldsymbol{x_d}|\boldsymbol{\theta},\lambda)}  \Bigg[ \frac{E^{'} (\boldsymbol{x_d},\boldsymbol{\theta},\lambda)}{E (\boldsymbol{x_d},\boldsymbol{\theta},\lambda)}l\Big(\big(\boldsymbol{x_d},\boldsymbol{x_c}^{\ast}(\boldsymbol{x_d},\boldsymbol{\theta})\big),\boldsymbol{y}\Big)\Bigg]\\&-\mathbb{E}_{\boldsymbol{x_d}\sim p(\boldsymbol{x_d}|\boldsymbol{\theta},\lambda)}\bigg[  \frac{E^{'} (\boldsymbol{x_d},\boldsymbol{\theta},\lambda)}{E (\boldsymbol{x_d},\boldsymbol{\theta},\lambda)}  \bigg]  \mathbb{E}_{\boldsymbol{x_d}\sim p(\boldsymbol{x_d}|\boldsymbol{\theta},\lambda)}\bigg[ l\Big(\big(\boldsymbol{x_d},\boldsymbol{x_c}^{\ast}(\boldsymbol{x_d},\boldsymbol{\theta})\big),\boldsymbol{y}\Big)  \bigg]\\&
    +\mathbb{E}_{\boldsymbol{x_d}\sim p(\boldsymbol{x_d}|\boldsymbol{\theta},\lambda)}\Bigg[ \frac{\partial l\Big(\big(\boldsymbol{x_d},\boldsymbol{x_c}^{\ast}(\boldsymbol{x_d},\boldsymbol{\theta})\big),\boldsymbol{y}\Big)}{\partial \boldsymbol{x_c}^{\ast}} \frac{\partial  \boldsymbol{x_c}^{\ast}(\boldsymbol{x_d},\boldsymbol{\theta})}{\partial \boldsymbol{\theta}} \Bigg]
        \end{aligned}
\end{gather}
Therefore, we have derived \cref{expect_grad}.

Since $r_{\lambda}(\boldsymbol{\theta},\boldsymbol{y})=r_{\lambda}(M_{\boldsymbol{\phi}}(\boldsymbol{z}),\boldsymbol{y})$ is bounded by \cref{assum_bound}, then according to Theorem 9.56 in \citet{shapiro2021lectures}, 
\begin{equation}
    \frac{\partial \mathcal{R}_{\lambda}(\boldsymbol{\phi})}{\partial \boldsymbol{\phi}} =\frac{\partial \mathbb{E}_{(\boldsymbol{z},\boldsymbol{y})\sim \mathbb{P}} \ r_{\lambda}(M_{\boldsymbol{\phi}}(\boldsymbol{z}),\boldsymbol{y})}{\partial \boldsymbol{\phi}}=\mathbb{E}_{(\boldsymbol{z},\boldsymbol{y})\sim \mathbb{P}} \bigg[ 
    \frac{\partial r_{\lambda}(M_{\boldsymbol{\phi}}(\boldsymbol{z}),\boldsymbol{y})}{\partial \boldsymbol{\phi}}\bigg]=\mathbb{E}_{(\boldsymbol{z},\boldsymbol{y})\sim \mathbb{P}}\bigg[ \frac{\partial r_{\lambda}(\boldsymbol{\theta},\boldsymbol{y})}{\partial \boldsymbol{\theta}} \frac{\partial \boldsymbol{\theta}}{\partial \boldsymbol{\phi}}\bigg]
\end{equation}

\section{Extension to Wasserstein DRO Layer}
\label{wass_extend}

Here we present how to extend the proposed DRO Layer to Wasserstein-based DRO with a learnable radius.

In the non-contextual setting, the reference distribution of the Wasserstein ambiguity set is typically set to an empirical distribution of $N$ data points. In the contextual setting, we take the idea in \citet{bertsimas2020predictive} and set the conditional empirical distribution $\widehat{\mathbb{P}}(\boldsymbol{z})$ as a weighted sum of data points, \emph{i.e.}
\begin{equation}
    \widehat{\mathbb{P}}(\boldsymbol{z})=\sum_{n=1}^N \omega_n(\boldsymbol{z}) \delta_{\boldsymbol{y}_n}
\end{equation}
where $\delta_{[\cdot]}$ is the Dirac delta function and $\omega_n(\boldsymbol{z}),n\in[N]$ are the weights satisfying $\sum_{n\in [N]}\omega_n(\boldsymbol{z})=1$.

Let $\epsilon_{\theta}(\boldsymbol{z})$ be the learnable radius with parameter $\theta$. Then, the Wasserstein ambiguity set is 
\begin{equation}
    \mathscr{U}\Big(\widehat{\mathbb{P}}(\boldsymbol{z}),\epsilon_{\theta} (\boldsymbol{z})\Big)=\left\{\mathbb{P} \left|  \mathbb{P}(\varXi)=1, d_W\left( \widehat{\mathbb{P}}(\boldsymbol{z}),\mathbb{P}\right)\leq \epsilon_{\theta} (\boldsymbol{z}) \right.\right\},
\end{equation}
and the Wasserstein DRO problem is
\begin{equation}\tag{WDRO}\label{WDRO}
    \min_{\boldsymbol{x}\in \mathcal{X}}\max_{\mathbb{P}\in \mathscr{U}\left(\widehat{\mathbb{P}}(\boldsymbol{z}),\epsilon_{\theta} (\boldsymbol{z})\right)} \mathbb{E}_{\mathbb{P}}[c(\boldsymbol{x},\boldsymbol{y})]
\end{equation}
where $\boldsymbol{x}$ is the mixed-integer decision variable satisfying \cref{assum_mixed_int} and $c(\boldsymbol{x},\boldsymbol{y})$ is the cost function  satisfying \cref{ass_cost_func}.

As outlined in the paper, the procedure of the proposed DRO Layer is presented as follows.
\begin{gather*}
    \boldsymbol{z} \stackrel{\theta}{\longrightarrow}  \mathscr{U}\Big( \widehat{\mathbb{P}}(\boldsymbol{z}),\epsilon_{\theta} (\boldsymbol{z}) \Big) \longrightarrow \text{Problem (WDRO)}  \xrightarrow{\text{Solve (WDRO) for } T \text{ times}} \text{Construct proposal distribution}\\ \xrightarrow{\text{Importance sampling } } \text{Compute gradient (\ref{expect_grad})}\to \theta \text{ update}
\end{gather*}

Therefore, in order to learn the ambiguity set in a decision-focused style, we only need to ensure that 
\begin{enumerate}[(i)]
    \item The problem (\ref{WDRO}) is a mixed-integer linear cone programming.\label{conn1}
    \item When the integer part of the decision variable (\emph{i.e.}, $\boldsymbol{x}_d$) is fixed, the problem (\ref{WDRO}) is a linear cone programming.\label{conn2}
\end{enumerate}
where condition (\ref{conn1}) allows us to use a commercial solver like Gurobi to solve problem (\ref{WDRO}) for $T$ times so that we can construct the proposal distribution, and condition (\ref{conn2}) allows us to derive the gradient of continuous variables with respect to learnable parameter in computing gradient (\ref{expect_grad}).

In fact, these two conditions are satisfied for Wasserstein DRO, and we formally present this result in the following corollary.
\begin{corollary}
    If the mixed-integer decision variable $\boldsymbol{x}$ satisfies Assumption 4.7 and 
 the cost function $c(\boldsymbol{x},\boldsymbol{y})$ satisfies Assumption 4.8, then condition (\ref{conn1}) and (\ref{conn2}) hold for the problem (\ref{WDRO}).
\end{corollary}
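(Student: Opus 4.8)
\emph{Proof proposal.} The plan is to reduce Problem~(\ref{WDRO}) to the same conic form established in \cref{thm_refor}, with the single Wasserstein multiplier playing the role of the moment-dual variables. First I would invoke strong Wasserstein duality \citep{mohajerin2018data}: since the support $\varXi$ is a closed second-order cone representable set and the reference measure $\widehat{\mathbb{P}}(\boldsymbol{z})=\sum_{n}\omega_n(\boldsymbol{z})\delta_{\boldsymbol{y}_n}$ is finitely supported, the inner worst-case expectation admits the finite-dimensional dual
\begin{equation}\nonumber
\max_{\mathbb{P}\in\mathscr{U}(\widehat{\mathbb{P}}(\boldsymbol{z}),\epsilon_\theta(\boldsymbol{z}))}\mathbb{E}_{\mathbb{P}}[c(\boldsymbol{x},\boldsymbol{y})]=\min_{\rho\geq 0}\Big\{\rho\,\epsilon_\theta(\boldsymbol{z})+\sum_{n=1}^N\omega_n(\boldsymbol{z})\,s_n(\boldsymbol{x},\rho)\Big\},
\end{equation}
where $s_n(\boldsymbol{x},\rho):=\sup_{\boldsymbol{y}\in\varXi}\big[c(\boldsymbol{x},\boldsymbol{y})-\rho\|\boldsymbol{y}-\boldsymbol{y}_n\|\big]$. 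Problem~(\ref{WDRO}) then becomes the joint minimization over $\boldsymbol{x}\in\mathcal{X}$ and $\rho\geq 0$, so it suffices to show that each $s_n(\boldsymbol{x},\rho)$ is representable by finitely many second-order cone constraints in which $\rho$ enters affinely and $\boldsymbol{x}$ enters only through its (second-order cone representable) epigraph.

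The key observation is that $s_n(\boldsymbol{x},\rho)$ is precisely a robust subproblem of the type already dualized in the proof of \cref{thm_refor} (see \cref{proof_refor}): it maximizes $c(\boldsymbol{x},\cdot)$ over the representable set $\varXi$, now augmented by the concave penalty $-\rho\|\cdot-\boldsymbol{y}_n\|$, whose epigraph $\{(\boldsymbol{y},t):t\geq\|\boldsymbol{y}-\boldsymbol{y}_n\|\}$ is itself a second-order cone. For the single-stage case \cref{ass_cost_func}(i), $c(\boldsymbol{x},\boldsymbol{y})=\sum_k c_k(\boldsymbol{x})y_k$ is linear in $\boldsymbol{y}$, so the inner objective is concave and, under a Slater condition on $\varXi$ paralleling \cref{ass_sla}, I would apply second-order cone duality to rewrite $s_n$ as a minimization over conic multipliers $\boldsymbol{\eta}_j$, with the coefficients $c_k(\boldsymbol{x})$ appearing only through inequalities of the form $-\sum_j(\boldsymbol{A}_j^{\boldsymbol{y}}\boldsymbol{e}_k)^T\boldsymbol{\eta}_j\geq c_k(\boldsymbol{x})$, which are second-order cone representable by \cref{ass_cost_func}. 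For the two-stage case \cref{ass_cost_func}(ii), I would first dualize the recourse LP, writing $c(\boldsymbol{x},\boldsymbol{y})=\max_{\boldsymbol{\pi}\in\Pi}\boldsymbol{\pi}^T\big(h(\boldsymbol{y})-\boldsymbol{T}(\boldsymbol{y})\boldsymbol{x}\big)$ with $\Pi=\{\boldsymbol{\pi}:\boldsymbol{W}^T\boldsymbol{\pi}\leq\boldsymbol{q}\}$ independent of $(\boldsymbol{x},\boldsymbol{y})$, and then dualize the supremum over $\boldsymbol{y}$, reusing the identical route of \cref{proof_refor} so that the reformulation is again a system of second-order cone constraints affine in $\boldsymbol{x}$.

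Assembling these pieces and substituting the conic representation of each $s_n$ back into the dual yields a single minimization whose objective is linear in $\rho$ and the lifted epigraph variables and whose feasible region is an intersection of second-order cones together with the integrality restriction $\boldsymbol{x}_d\in\{0,1\}^{n_1}$ from \cref{assum_mixed_int}; this is exactly a mixed-integer linear second-order cone program, giving condition~(\ref{conn1}). Fixing $\boldsymbol{x}_d$ removes the only nonconvex constraint and leaves $\boldsymbol{x}_c$ continuous, so the residual problem is a continuous linear second-order cone program, giving condition~(\ref{conn2}). The main obstacle I anticipate is the two-stage subproblem: after substituting the recourse dual, the term $\boldsymbol{\pi}^T\boldsymbol{T}(\boldsymbol{y})\boldsymbol{x}$ couples $\boldsymbol{\pi}$, $\boldsymbol{y}$ and $\boldsymbol{x}$ bilinearly, so a naive interchange of the two suprema does not concavify the inner problem. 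The care therefore lies in invoking the exact affine-recourse reformulation behind \cref{thm_refor} (equivalently, Theorem~1 of \citet{bertsimas2019adaptive}) instead of a direct min--max interchange, and in checking that $\boldsymbol{x}$ still enters only affinely, so that fixing $\boldsymbol{x}_d$ indeed restores convexity.
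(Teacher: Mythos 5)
Your proposal is correct and follows essentially the same route as the paper: invoke the Mohajerin Esfahani--Kuhn strong duality to obtain a finite reformulation, dualize the inner supremum over the second-order cone representable support $\varXi$ to get finitely many SOC constraints in which $\boldsymbol{x}$ enters affinely, and conclude that integrality of $\boldsymbol{x}_d$ is the only source of nonconvexity. If anything you are more thorough than the paper, which only works out the bilinear cost $\max_{k}\boldsymbol{x}^T\boldsymbol{T}_k\boldsymbol{y}$ and asserts the general case is similar, whereas you correctly identify (and resolve, via the vertex/affine-recourse reformulation behind \cref{thm_refor}) the bilinear coupling that arises in the two-stage case.
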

\begin{proof}
    The proof is straightforward by combining techniques in Wasserstein DRO \cite{mohajerin2018data} and second-order cone programming \cite{ben2001lectures}.

    In fact, since Assumption 4.7 holds, we only need to show that condition (\ref{conn2}) holds when the decision variable $\boldsymbol{x}$ is pure continuous and the feasible region $\mathcal{X}$ is second-order cone representable. For simplicity, we prove the corollary for bilinear cost function
    \begin{equation}
        c(\boldsymbol{x},\boldsymbol{y})=\max_{k\in[K]} \boldsymbol{x}^T\boldsymbol{T}_k\boldsymbol{y}
    \end{equation}
    and proof for general cost functions satisfying \cref{ass_cost_func} is quite similar but with heavier notations.

    According to \citet{mohajerin2018data}, the problem (\ref{WDRO}) can be reformulated as
    \begin{equation}\label{refor_wass}
        \begin{gathered}
        \inf_{\boldsymbol{x}\in \mathcal{X},\lambda,s_n,\boldsymbol{\gamma}_{nk}} \lambda \epsilon_{\theta} (\boldsymbol{z}) + \sum_{n=1}^N \omega_n(\boldsymbol{z}) s_n\\
        \text{s.t. } s_n\geq\sup_{\boldsymbol{y}\in \varXi} \left( \boldsymbol{x}^T \boldsymbol{T}_k \boldsymbol{y} - \boldsymbol{\gamma}_{nk}^T \boldsymbol{y} \right)+\boldsymbol{\gamma}_{nk}^T \boldsymbol{y}_n,\ \forall n\in [N],\forall k\in [K]\\
        \|  \boldsymbol{\gamma}_{nk} \|_{\ast}\leq \lambda,\ \forall n\in [N],\forall k\in [K]
    \end{gathered}
    \end{equation}
    Since the uncertainty support $\varXi$ is a second-order cone representable set, it can be formulated as follows.
    \begin{equation}\label{support}
        \varXi=\left\{\boldsymbol{y}\ |\ \exists \boldsymbol{v} \text{ s.t. }  \boldsymbol{A}_j^{\boldsymbol{y}} \boldsymbol{y}+\boldsymbol{A}_j^{\boldsymbol{v}}-\boldsymbol{b}_j\geq_{L^{m_j}} \boldsymbol{0} ,\forall j\in [J]   \right\}
    \end{equation}

    By leveraging expression (\ref{support}), we can further reformulate (\ref{refor_wass}) as 
    \begin{equation}\label{refor_1}
        \begin{gathered}
        \inf_{\boldsymbol{x}\in \mathcal{X},\lambda,s_n,\boldsymbol{\gamma}_{nk},\boldsymbol{\eta}_{nkj}} \lambda \epsilon_{\theta} (\boldsymbol{z}) + \sum_{n=1}^N \omega_n(\boldsymbol{z}) s_n\\
        \text{s.t. } s_n\geq \sum_{j\in [J] } -\boldsymbol{b}_j^T \boldsymbol{\eta}_{nkj},\ \forall n\in [N],\forall k\in [K]\\
        \boldsymbol{T}_k^T\boldsymbol{x}-\boldsymbol{\gamma}_{nk}+\sum_{j\in [J]} {\boldsymbol{A}_j^{\boldsymbol{y}}}^T\boldsymbol{\eta}_{nkj}=\boldsymbol{0},\ \forall n\in [N],\forall k\in [K] \\
        \sum_{j\in [J] }{\boldsymbol{A}_j^{\boldsymbol{v}}}^T\boldsymbol{\eta}_{nkj}=\boldsymbol{0},\ \forall n\in [N],\forall k\in [K] \\
        \boldsymbol{\eta}_{nkj}\geq_{L^{m_j}}\boldsymbol{0},\ \forall n\in [N],\forall k\in [K], \forall j \in [J]\\
        \|  \boldsymbol{\gamma}_{nk} \|_{\ast}\leq \lambda,\ \forall n\in [N],\forall k\in [K]
    \end{gathered}
    \end{equation}
    Problem (\ref{refor_1}) is already in the form of a linear cone programming.
\end{proof}

\section{Experiment Setup}
\label{exp_set}

\subsection{Toy Example: Multi-item Newsvendor Problem}
\label{more_news}

We measure the performance by the following percentage optimality gap.
\begin{equation}
    \text{Percentage Optimality Gap}=\frac{\mathbb{E}_{(\boldsymbol{z},\boldsymbol{y})\sim \mathbb{P}}\ l( \boldsymbol{x}^{\text{learning}}(\boldsymbol{z} ) ,\boldsymbol{y}) -\mathbb{E}_{(\boldsymbol{z},\boldsymbol{y})\sim \mathbb{P}}\ l( \boldsymbol{x}^{\ast}(\boldsymbol{z} ) ,\boldsymbol{y}) }{\mathbb{E}_{(\boldsymbol{z},\boldsymbol{y})\sim \mathbb{P}}\ l( \boldsymbol{x}^{\ast}(\boldsymbol{z} ) ,\boldsymbol{y}) }
\end{equation}
where $\boldsymbol{x}^{\text{learning}}(\boldsymbol{z} ) $ is the decision given by the learning method and $ \boldsymbol{x}^{\ast}(\boldsymbol{z} )$ is the optimal decision derived by solving the following problem.
\begin{equation}
    \boldsymbol{x}^{\ast}(\boldsymbol{z} )=\argmin_{\boldsymbol{x}\in \mathcal{X}} \mathbb{E}_{\boldsymbol{y}\sim p(\boldsymbol{y}|\boldsymbol{z})}l(\boldsymbol{x},\boldsymbol{y})
\end{equation}

In computing the percentage optimality gap, we compute $\mathbb{E}_{(\boldsymbol{z},\boldsymbol{y})\sim \mathbb{P}}\ l( \boldsymbol{x}^{\text{learning}}(\boldsymbol{z} ) ,\boldsymbol{y})$ by sample average approximation using $1\times 10^5$ pairs of $(\boldsymbol{z},\boldsymbol{y})$ i.i.d. sampled from $\mathbb{P}$. 
Since 
\begin{equation}\label{per_saa}
    \mathbb{E}_{(\boldsymbol{z},\boldsymbol{y})\sim \mathbb{P}}\ l( \boldsymbol{x}^{\ast}(\boldsymbol{z} ) ,\boldsymbol{y})=\mathbb{E}_{\boldsymbol{z}\sim \mathbb{P}_{\boldsymbol{z}}} \bigg[ \min_{\boldsymbol{x}\in \mathcal{X}}\mathbb{E}_{\boldsymbol{y}\sim p(\boldsymbol{y}|\boldsymbol{z})}\ l( \boldsymbol{x},\boldsymbol{y}) \bigg]
\end{equation}
we follow a two step approach to compute $\mathbb{E}_{(\boldsymbol{z},\boldsymbol{y})\sim \mathbb{P}}\ l( \boldsymbol{x}^{\ast}(\boldsymbol{z} ) ,\boldsymbol{y})$. The outer expectation in \cref{per_saa} is approximated by sample average approximation using 400 $\boldsymbol{z}$ i.i.d. sampled from the marginal distribution $\mathbb{P}_{\boldsymbol{z}}$. The inner stochastic program $\min_{\boldsymbol{x}\in \mathcal{X}}\mathbb{E}_{\boldsymbol{y}\sim p(\boldsymbol{y}|\boldsymbol{z})}\ l( \boldsymbol{x},\boldsymbol{y})$ is also solved by sample average approximation using 200 $\boldsymbol{y}$ i.i.d. sampled from the conditional distribution $p(\boldsymbol{y}|\boldsymbol{z})$.

In experiments, we use \emph{k}-nearest-neighbors weight function to construct the distribution $\mathbb{Q}_{\boldsymbol{z}}$ (defined in \cref{W_SAA}) in the projection layer, \emph{i.e.},
\begin{equation}\label{nominal_distri}
    \omega_n(\boldsymbol{z})=\frac{1}{K}, \ \text{if }\boldsymbol{z}_n \text{ is in the k-nearest-neighbor of }\boldsymbol{z}. \text{ Else, }
        \omega_n(\boldsymbol{z})=0.
\end{equation}

In the experiments, we select $K$ as $\frac{N}{20}$, where $N$ is the training data size.

We use neural networks to learn the ambiguity set parameters $\boldsymbol{\mu}_\text{I},\sigma_\text{I},\boldsymbol{\mu}_\text{II},\sigma_\text{II}$, the architecture are presented as follows.
\begin{gather}
    \boldsymbol{\mu}_\text{I}:\ \text{FC}(1,60)\to \text{FC}(60,60)\to \text{FC}(60,4)\\
    {\sigma}_\text{I}:\ \text{FC}(1,60)\to \text{FC}(60,60)\to \text{FC}(60,1)\\
    \boldsymbol{\mu}_\text{II}:\ \text{FC}(1,60)\to \text{FC}(60,60)\to \text{FC}(60,4)\\
    {\sigma}_\text{II}:\ \text{FC}(1,60)\to \text{FC}(60,60)\to \text{FC}(60,1)
\end{gather}
where FC$(m_1,m_2)$ represents full connection layer with $m_1$ inputs and $m_2$ outputs.

Since $\boldsymbol{\mu}$ and $\sigma$ are learned by different neural networks, in the pre-training, we first train the parameter $\boldsymbol{\mu}$ by minimizing $\Big\|   \mathbb{E}_{ \mathbb{Q}_{\boldsymbol{z}}}[g_i(\boldsymbol{y},\boldsymbol{\alpha}_i)]     \Big\|$ and then train $\sigma$ by minimizing $\Big\|   \mathbb{E}_{ \mathbb{Q}_{\boldsymbol{z}}}[g_i(\boldsymbol{y},\boldsymbol{\alpha}_i)]-\sigma_i     \Big\|$.

In conducting experiments, we vary the training data size $N$ from 100 to 800, and in each case, 10 runs are conducted. In each case, the size of the validation data set is set to $\frac{N}{5}$. In generating data, we first sample covariate $\boldsymbol{z}$, and then sample $\boldsymbol{y}$ conditioned on $\boldsymbol{z}$.

In the multi-item newsvendor problem (\ref{news_pro}) with $n=4$, we set $\boldsymbol{a}^c=(0.25,0.5,0.75,1), \boldsymbol{a}^d=0.95\times\boldsymbol{a}^c,\boldsymbol{v}=(10.0,13.0,16.0,19.0),\boldsymbol{b}=(2.0,4.0,6.0,8.0),\boldsymbol{d}=(0.5,1.0,1.5,2.0)$. The covariate $\boldsymbol{z}$ follows the uniform distribution on $[0,1]$, and conditioned on $\boldsymbol{z}$, we set the distribution of demand $\boldsymbol{y}$ by
\begin{gather*}
    y_1\sim \mathcal{N}\left( 8+6(z-0.2)^2,\frac{1}{1+8|z-0.2|} \right),
    y_2\sim \mathcal{N}\left( 11+6(z-0.4)^2,\frac{1}{1+8|z-0.4|} \right)\\
    y_3\sim \mathcal{N}\left( 14+6(z-0.6)^2,\frac{1}{1+8|z-0.6|} \right),
    y_4\sim \mathcal{N}\left( 17+6(z-0.8)^2,\frac{1}{1+8|z-0.8|} \right)
\end{gather*}

\subsection{Portfolio Management Problem}
\label{more_prot}

In the experiment on the portfolio management problem, the covariate $\boldsymbol{z}$ is a vector of 5 dimensions, and the returns of $n$ assets are generated by the following non-linear model.
\begin{equation}
    \boldsymbol{y}= \boldsymbol{a} + \boldsymbol{B} \boldsymbol{z} + \boldsymbol{C} \boldsymbol{e} + \boldsymbol{D} \text{flat}(\boldsymbol{z}\boldsymbol{z}^T) + \| \boldsymbol{z} \|_1 \text{diag}(\boldsymbol{s}) \boldsymbol{g}
\end{equation}
where matrixes $\boldsymbol{B}\in \mathbb{R}^{n\times 5}, \boldsymbol{C}\in \mathbb{R}^{n\times 3}, \boldsymbol{D}\in \mathbb{R}^{n\times 25}$ and vector $ \boldsymbol{s}\in\mathbb{R}^{n}$ are randomly picked parameters, $\boldsymbol{e}\in\mathbb{R}^{3}$ and $ \boldsymbol{g}\in\mathbb{R}^{n}$ are random variables independent of covariate $\boldsymbol{z}\in\mathbb{R}^{5}$, and $\| \cdot \|_1$ is the 1-norm.

We use 2,500 data for training, 500 data for validation, and 1,000 data for testing.
The distribution $\mathbb{Q}_{\boldsymbol{z}}$ is also set to k-nearest distribution as in (\ref{nominal_distri}), and $K$ is set to 20.

In the gradient estimation, we solve the DRO 3 times to construct (\ref{impr_asmp}) and use 4 samples to estimate the gradient term by importance sampling. For the energy parameter $\lambda$ in (\ref{eneger_defnm}), we initially set it to 10, and subsequently reduce it by one-third every 30 epochs.

\subsubsection{Portfolio Management Problem with Pure Continuous Decisions}
We use the following two-layer full-connected neural network to learn the ambiguity set parameter in both our method and the method proposed in \citet{costa2023distributionally}.
\begin{gather}
    \text{FC}(5,22)\to \text{FC}(22,27)\to \text{FC}(27,m)
\end{gather}
where $m$ is the dimension of the ambiguity set parameter.

\subsubsection{Portfolio Management Problem with Mixed-Integer Decisions}

In problem (\ref{port_prob_mix}), the number of binary decisions is set to $\frac{n}{5}$ where $n$ is the number of assets, and the problem parameter $\boldsymbol{v}\in \mathbb{R}^{n/5}$ is set to $[1/n,\cdots,1/n]$.

To analyze the impact of the number of constraints in the SOC ambiguity set on the performance of the proposed decision-focused learning, we conduct experiments on the 60-dimensional mixed-integer portfolio management problem and consider the following three types of SOC ambiguity sets.
\begin{gather}
    \mathscr{U}_{15}(\boldsymbol{\mu},\boldsymbol{\sigma})=\left\{ \mathbb{P} \left|\ \begin{gathered}
        \mathbb{P}(\varXi)=1\\
            \mathbb{E}_{\mathbb{P}} \left[\sum_{j=1}^4 \left\| \boldsymbol{y}_{4(i-1)+j}-\boldsymbol{\mu}_{4(i-1)+j}  \right\|_2^2 \right]\leq \boldsymbol{\sigma}_i,\forall  i \in [15]
        \end{gathered}
         \right.       \right\}\\
         \mathscr{U}_{30}(\boldsymbol{\mu},\boldsymbol{\sigma})=\left\{ \mathbb{P} \left|\ \begin{gathered}
        \mathbb{P}(\varXi)=1\\
            \mathbb{E}_{\mathbb{P}} \left[\sum_{j=1}^2 \left\| \boldsymbol{y}_{2(i-1)+j}-\boldsymbol{\mu}_{2(i-1)+j}  \right\|_2^2 \right]\leq \boldsymbol{\sigma}_i,\forall  i \in [30]
        \end{gathered}
         \right.       \right\}\\
         \mathscr{U}_{60}(\boldsymbol{\mu},\boldsymbol{\sigma})=\left\{ \mathbb{P} \left|\ \begin{gathered}
        \mathbb{P}(\varXi)=1\\
            \mathbb{E}_{\mathbb{P}} \left[ \left\| \boldsymbol{y}_{i}-\boldsymbol{\mu}_{i}  \right\|_2^2 \right]\leq \boldsymbol{\sigma}_i,\forall  i \in [60]
        \end{gathered}
         \right.       \right\}
\end{gather}

Intuitively, $\mathscr{U}_{15}$ constrains 4 dimensions together and thus leads to 15 constraints, $\mathscr{U}_{30}$ constrains 2 dimensions together and thus leads to 30 constraints, and $\mathscr{U}_{60}$ constrains different dimensions individually and leads to 60 constraints.

\section{Discussion of Limitations}
\label{diss_limm}
The decision-focused learning pipeline we developed in Section 4 can be decomposed into four processes: 1. learning layer; 2. projection layer; 3. solving MICP; 4. DRO Layer.

Processes 2 and 4 are built on Cvxpylayers \cite{agrawal2019differentiable}, and Process 3 is built on commercial solver Gurobi. 
Processes 1, 2, and 4 participate in both the forward and backward path, and solving MICP is only involved in the forward path.

To test the scalability of our method, we test the running time for a batch of 100 instances on large-scale problems, and the results are presented in \cref{large_scale}. The experiments are conducted on a laptop with an i7 CPU and 32G RAM. In \cref{large_scale}, the running time for solving MICP and DRO Layer pertains to $T=1$ and $S=1$, so the total computational time for these two processes should be multiplied by $T$ and $S$, respectively.

From \cref{large_scale}, we can see the computational time is very long in high-dimensional problems. However, we note that this computational inefficiency is due to the inefficiency of Cvxpylayers and Gurobi, because both of them are built on \textbf{CPU} rather than on GPU.

\begin{table*}[htpb!]
\centering
\caption{Running time for a batch of 100 instances on problems with different dimensions. ($T$ stands for the number of solutions we derive to construct the proposal distribution, and $S$ stands for the number of sampling to estimate gradient (\ref{expect_grad}).)}
\begin{tabular}{ccccccc} 
\toprule
\multirow{2}*{Dimension} & \multicolumn{4}{c}{Forward path}  & \multirow{2}*{Backward propagation}  \\
\cmidrule(lr){2-5}  
   &   learning layer & projection layer & solving MICP ($\times T$) & DRO Layer ($\times S$)  \\
  \midrule
  100 & $<1$ms & 16.5s & 8.4s & 1.2s & 9.8s\\
  200 & $<1$ms & 19.6s & 12.8s& 1.4s& 13.1s\\
  400 & $<1$ms & 44.0s &24.1s &5.8s &90.5s\\
  800 & $<1$ms & 82.9s &47.6s &15.3s &358.7s\\
  \bottomrule
  \end{tabular}
  \label{large_scale}
\end{table*}
\end{document}